\documentclass[12pt]{amsart}
\usepackage{jabbrv}

\usepackage{Commands}
\allowdisplaybreaks
\numberwithin{equation}{section}

\makeatletter

\makeatother
\makeatletter
\newcommand*{\itemequation}[3][]{%
	\item
	\begingroup
	\refstepcounter{equation}%
	\ifx\\#1\\%
	\else
	\label{#1}%
	\fi
	\sbox0{#2}%
	\sbox2{$\displaystyle#3\m@th$}%
	\sbox4{ \@eqnnum}%
	\dimen@=.5\dimexpr\linewidth-\wd2\relax
	\let\CenterInSpace=Y%
	\ifcase
	\ifdim\wd0>\dimen@
	\z@
	\else
	\ifdim\wd4>\dimen@
	\z@
	\else
	\@ne
	\fi
	\fi
	\let\CenterInSpace=N%
	\fi
	\ifdim\dimexpr\wd0+\wd2+\wd4\relax>\linewidth
	\@latex@warning{Equation is too large}%
	\fi
	\noindent
	\rlap{\copy0}%
	\ifx\CenterInSpace Y%
	\rlap{\hbox to \linewidth{\kern\wd0\hss\copy2\hss\kern\wd4}}%
	\else
	\rlap{\hbox to \linewidth{\hfill\copy2\hfill}}%
	\fi
	\hbox to \linewidth{\hfill\copy4}%
	\hspace{0pt}
	\endgroup
	\ignorespaces
}
\makeatother

\providecommand{\phantomsection}{}
\AtBeginDocument{\let\textlabel\label}
\makeatletter
\newcommand{\mylabel}[2]{\raisebox{.7\normalbaselineskip}{\phantomsection}#1%
	\def\@currentlabel{#1}\textlabel{#2}}
\makeatother

\author{Jason Atnip}
\address{School of Mathematics and Statistics, University of New South Wales, Sydney, NSW 2052, AUS}
\email{j.atnip@unsw.edu.au  \hspace*{0.42cm} \it Web: \rm http://atnipmath.com}
\title[Non-Autonomous Meromorphic Functions]{Dimensions of Non-Autonomous Meromorphic Functions of Finite Order}
\date{}
\begin{document}

\begin{abstract}
	In this paper we study two classes of meromorphic functions previously studied by Mayer in \cite{mayer_size_2007} and by Kotus and Urba\'nski in \cite{kotus_hausdorff_2008}. In particular we estimate a lower bound for the Julia set and the set of escaping points for non-autonomous additive and affine perturbations of functions from these classes. For particular classes we are able to calculate these dimensions exactly. In these cases, we are able to reinterpret our results to show that the Hausdorff dimension of the set of points in the Julia set which escape to infinity is stable under sufficiently small additive perturbations. We accomplish this by constructing non-autonomous iterated function systems, whose limit sets sit inside of the aforementioned non-autonomous Julia sets. We also give estimates for the eventual and eventual hyperbolic dimensions of these non-autonomous perturbations. 
\end{abstract}
\maketitle

\section{Introduction}
Much work has been done recently concerning the (autonomous) dynamics of transcendental meromorphic functions. In \cite{mayer_size_2007}, Mayer used infinite iterated function systems to find a lower bound for the Hausdorff dimension of the Julia set of meromorphic functions of finite order as well as their hyperbolic dimension. Previously similar techniques were used by Kotus and Urba\'nski in \cite{kotus_hausdorff_2003} and Roy and Urba\'nski in \cite{roy_random_2011} to find a lower bound for the Hausdorff dimension of the Julia set of autonomous and random systems of elliptic functions respectively by using the theories of infinite autonomous and random iterated function systems. In a similar fashion, we will use the theory of non-autonomous conformal iterated function systems as developed in \cite{rempe-gillen_non-autonomous_2016} and \cite{atnip_nonautonomous_2017}, to find a lower and upper bound for the Hausdorff dimension of the set of points in the Julia set which escape to infinity as well as a lower bound for the Hausdorff dimension of the radial limit set generated from a non-autonomous family of finite order meromorphic functions. To the best of the author's knowledge our results concerning the lower bound for the dimension of the set of escaping points is new for the given classes of meromorphic functions even in the autonomous case. 

In this article, we will primarily be concerned with non-autonomous dynamics stemming from perturbations of a single meromorphic function. In particular, given sequences $\seq{c_n}$ and $\seq{\lm_n}$ in $\CC$ and a transcendental meromorphic function $f$ of finite order $\rho$, we will consider additive and affine perturbations of $f$ defined by
\begin{align*}
f_n(z)=f(z)+c_n\spand \hat{f}_n(z)=\lm_n\cdot f(z)+c_n
\end{align*} 
for each $n\in\NN$. The non-autonomous additive and affine iterates are defined respectively for each $n\in\NN$ with 
\begin{align*}
F_+^n=f_n\circ\dots\circ f_1\spand F_A^n=\hat{f}_n\circ\dots\circ \hat{f}_1.
\end{align*}
In particular, by taking $c_n\equiv0$ and $\lm_n\equiv1$ for each $n\in\NN$, each of our results holds for ordinary, autonomous dynamical systems. 
If instead 
$$
c_n=c\neq 0 \spand \lm_n=\lm\neq 1
$$
are fixed for each $n\in\NN$ then our results apply to the perturbed autonomous systems given by 
$$
F_+=f_0+c \spand F_A=\lm\cdot f_0+c.
$$
Our results apply equally well to random dynamical systems if the perturbative parameters are chosen according to some probability distribution.
Additionally, by only taking the multiplicative perturbations $\lm_n\equiv 1$ and allowing the additive perturbations $c_n\neq 0$, we see that any statement concerning the function $F_A$ of non-autonomous affine perturbations also applies to the function $F_+$ of non-autonomous additive perturbations. This applies, in particular, to Theorem \ref{main thm: Mayer functions}. 

We will also give results concerning the eventual dimension and eventual hyperbolic dimension of a function. The eventual dimension of a transcendental meromorphic function is given by 
\begin{align*}
	\ED(f)=\lim_{R\to\infty}\HD\set{z\in\cJ(f): \absval{f^n(z)}>R, \;\forall n\geq 1},
\end{align*} 
and was first introduced by Rempe-Gillen and Stallard in \cite{rempe_hausdorff_2009} for entire functions, although the definition works for meromorphic functions as well. The concept of the eventual hyperbolic dimension of a transcendental function is a generalization of the notions of the eventual dimension and the hyperbolic dimension of Shishikura (see \cite{shishikura_boundary_1994}). The eventual hyperbolic dimension, which was first developed by De Zotti and Rempe-Gillen, is given by 
\begin{align*}
	\EHD(f)=\lim_{R\to\infty}\sup\set{\HD(X): X\sub\set{z:\absval{z}>R} \text{ is a hyperbolic set for } f }.
\end{align*}
In \cite{rempe_hyperbolic_2009}, Rempe-Gillen shows that the hyperbolic dimension of a transcendental function $f$ is equal to the Hausdorff dimension of the radial Julia set $\cJ_r(f)$, which is fully defined in Section~\ref{sec: non-aut mero dynamics}. Along the same vein as this result, we see that the eventual hyperbolic dimension and the Hausdorff dimension of the set 
\begin{align*}
	\cJ_r(f,R):=\set{z\in\cJ_r(f): \absval{f^n(z)}>R,\; \forall n\geq 1}
\end{align*} 
are similarly related.

\subsection{Structure of the Paper}
In Section~\ref{sec: Prelim} we recall some useful properties of meromorphic functions and certain notions from the study of non-autonomous dynamics as well as provide a statement of our main results. Section~\ref{Sec: NCIFS} concerns the necessary tools from the theory of non-autonomous iterated function systems. In Section~\ref{sec: Mayer} we will prove the first part of Theorem~\ref{main thm: Mayer functions}, and in Section~\ref{sec: KU} we will prove the first part of Theorems~\ref{main thm: escaping set KU} and \ref{main thm KU hyper}. In Section~\ref{sec ED} we will discuss the eventual and eventual hyperbolic dimensions of several well studied classes of functions as well as complete the proofs of our three main theorems. We will also make a connection with the eventual hyperbolic dimension and the Hausdorff dimension of the radial Julia set. Finally, in Section~\ref{sec: Examples} we will provide several examples of our main theorems.

\section{Preliminaries and Main Results}\label{sec: Prelim}
\subsection{Meromorphic Functions}
We refer the reader to the survey articles \cite{bergweiler_iteration_1993,kotus_fractal_2008} for a thorough treatment of the dynamics of meromorphic functions.   

In the sequel we will consider meromorphic functions of finite order $\rho=\rho(f)<\infty$. For $a\in\hat{\CC}$ we define the $a$-points to be the collection $f^{-1}(a)=\set{z_m(a):m\in\NN}$. Of particular interest will be Borel series of the form 
\begin{align*}
\Sg(t,a):=\sum_{m\in\NN}\absval{z_m(a)}^{-t}.
\end{align*} 
The exponent of convergence for the series is given by 
\begin{align*}
\rho_c(f,a):=\inf\set{t>0:\Sg(t,a)<\infty}. 
\end{align*}
A theorem of Borel shows that for all but at most two points $a\in\hat{\CC}$, we have that 
\begin{align}\label{Borel thm}
\rho_c(f,a)=\rho.
\end{align}
We say that a meromorphic function $f$ is of \textit{divergence type} if 
\begin{align*}
	\Sg(\rho,a)=\sum_{m\in\NN}\absval{z_m(a)}^{-\rho}=\infty.
\end{align*}
We let $\cP=\cP(f)$ denote the set of poles of $f$, and for each $a\in\cP$ we let $m(a)$ denote its multiplicity. If there are infinitely many poles and supposing that $\infty$ is not a Picard exceptional point, we see that the sum
\begin{align*}
	\sum_{a\in\cP}\absval{a}^{-t}
\end{align*}
converges for $t>\rho$ and diverges for $t<\rho$. If in addition there is some $M$ such that $m(a)\leq M$ for all $a\in\cP$, then there must also be some largest integer $M^*\leq M$ such that 
\begin{align}\label{def M*}
\sum_{a\in\cP\cap m^{-1}(M^*)}\absval{a}^{-t}
\end{align}
converges for $t>\rho$ and diverges for $t<\rho$.

By $\Sing(f^{-1})$ we denote the set of singular values, that is $z\in \Sing(f^{-1})$ if $z\in\CC$ and $z$ is a critical or asymptotic value of $f$. In the sequel we will consider functions from the Speiser class $\cS$ and the Eremenko-Lyubich class $\cB$ where
\begin{itemize}
	\item $f\in\cS$ if $\Sing(f^{-1})$ is finite, 	
	\item $f\in\cB$ if $\Sing(f^{-1})$ is bounded.
\end{itemize}
For more on these two classes of functions see \cite{eremenko_dynamical_1992}. In the sequel we will also require the use of the following result which is commonly known as Iversen's Theorem.
\begin{lemma}[Iversen's Theorem]\label{iversen lemma}
	Let $f$ be a transcendental meromorphic function such that $\infty$ is not an asymptotic value. Then, $f$ has infinitely many poles.
\end{lemma}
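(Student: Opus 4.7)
The plan is to argue by contradiction: I would assume that $f$ has only finitely many poles and derive that $\infty$ must then be an asymptotic value of $f$, contradicting the hypothesis. The key observation is that a meromorphic function with only finitely many poles decomposes cleanly into an entire part plus a rational remainder that decays at $\infty$, which lets me reduce the statement to the classical \emph{entire} version of Iversen's theorem.

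Concretely, suppose $f$ has only finitely many poles $p_1,\ldots,p_n\in\mathbb{C}$ and write their principal parts as $P_i\bigl(1/(z-p_i)\bigr)$, where each $P_i$ is a polynomial without constant term. Setting
$$R(z) \;:=\; \sum_{i=1}^{n} P_i\!\left(\frac{1}{z-p_i}\right),$$
one obtains a rational function with $R(z)\to 0$ as $|z|\to\infty$, and the difference $h := f - R$ is entire. Since $f$ is transcendental while $R$ is rational, $h$ must itself be a transcendental entire function. I would then invoke the classical Iversen theorem for entire functions, which asserts that every transcendental entire function admits $\infty$ as an asymptotic value; this produces a path $\gamma\colon [0,\infty)\to\mathbb{C}$ with $|\gamma(t)|\to\infty$ and $h(\gamma(t))\to\infty$ as $t\to\infty$.

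Along this same path $R(\gamma(t))\to 0$, so
$$f(\gamma(t)) \;=\; h(\gamma(t)) + R(\gamma(t)) \;\longrightarrow\; \infty,$$
exhibiting $\infty$ as an asymptotic value of $f$ and completing the contradiction. The only genuinely nontrivial ingredient is the entire case of Iversen's theorem, whose standard proof uses the maximum modulus principle to show that every connected component of $\{z:|h(z)|>r\}$ is unbounded for all sufficiently large $r$, and then threads an arc through a nested sequence of such components as $r\to\infty$ to produce the asymptotic tract. Rather than reproduce this construction, I would cite a standard reference (for example Nevanlinna's monograph on meromorphic functions); the remaining steps are just the partial-fraction decomposition and the trivial limit $R(\gamma(t))\to 0$, neither of which presents any real obstacle.
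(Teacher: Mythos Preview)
The paper does not actually prove this lemma; it is merely quoted as a classical fact (``commonly known as Iversen's Theorem'') and then invoked later in Section~\ref{sec: KU}. So there is no argument in the paper to compare yours against.

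That said, your proposal is a correct and standard proof. The reduction via subtracting the finitely many principal parts is exactly the right move: $h=f-R$ is entire, and it is genuinely transcendental because otherwise $f=h+R$ would be rational. The only substantive input is the entire case of Iversen's theorem, which you correctly identify and for which citing Nevanlinna (or any standard text on meromorphic functions) is appropriate. The remaining step $R(\gamma(t))\to 0$ is immediate since $R$ is a rational function vanishing at $\infty$. There is nothing to fix here.
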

Since we will make significant use of them, we now present the celebrated distortion theorems due to Koebe.
\begin{theorem}[Koebe's $1/4$-Theorem] \label{KDT1/4}
If $z\in {\mathbb C}$, $r>0$ and $f:B(z,r)\to\CC$  is an arbitrary univalent analytic function, then
$$
f(B(z,r))\sub B(f(z),4^{-1}|f'(z)|r).
$$
\end{theorem}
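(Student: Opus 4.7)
The plan is a two-step reduction: first rescale the given map to a univalent function on the unit disk normalized at the origin, and then apply the classical Koebe $1/4$-theorem for normalized univalent functions. Concretely, I would introduce $g\colon B(0,1)\to\CC$ defined by
$$g(w) \;=\; \frac{f(z+rw)-f(z)}{r\,f'(z)},$$
which is univalent on the unit disk with $g(0)=0$ and $g'(0)=1$. If the normalized inclusion $B(0,1/4)\subset g(B(0,1))$ can be established, then inverting this affine change of variables immediately gives the stated ball of radius $|f'(z)|r/4$ around $f(z)$ inside $f(B(z,r))$.

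For the normalized case I would use the classical Koebe omitted-value trick. Supposing toward a contradiction that some $w_0$ is not in $g(B(0,1))$, the fractional-linear composition
$$h(w) \;=\; \frac{w_0\,g(w)}{w_0 - g(w)}$$
is still univalent on $B(0,1)$ with $h(0)=0$ and $h'(0)=1$. A direct power-series expansion shows that if $g(w) = w + a_2 w^2 + \cdots$ then $h(w) = w + (a_2 + 1/w_0)\,w^2 + \cdots$, which is a purely algebraic computation.

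The concluding step invokes Bieberbach's estimate $|a_2|\leq 2$ for normalized univalent functions on the unit disk. Applying it to both $g$ and $h$ yields $|a_2|\leq 2$ and $|a_2 + 1/w_0|\leq 2$, whence the triangle inequality forces $|1/w_0|\leq 4$, i.e.\ $|w_0|\geq 1/4$. Since every omitted value therefore lies at distance at least $1/4$ from the origin, this is precisely the desired normalized Koebe $1/4$ inclusion.

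The only non-formal ingredient in the whole argument is Bieberbach's coefficient bound, which itself is derived from the classical area theorem applied to the odd square-root transform $\sqrt{g(w^2)}$; these are standard tools in geometric function theory, so the cleanest route is to invoke them as black boxes rather than reprove them here. Every remaining step reduces to elementary manipulations of univalent maps and a single triangle-inequality application.
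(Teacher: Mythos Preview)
Your argument is the standard, correct derivation of the classical Koebe $1/4$-theorem, and there is nothing to compare it against: the paper does not prove this statement at all but simply records it (together with the Koebe Distortion Theorem) as a classical tool, so any valid proof you supply goes beyond what the paper provides.

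One point worth flagging: the displayed inclusion in the paper's statement is written the wrong way around. As stated, $f(B(z,r))\subset B(f(z),4^{-1}|f'(z)|r)$ would assert that the image is \emph{contained} in the small disk, which is false in general (take $f(w)=w$). The genuine Koebe $1/4$-theorem, and the version actually used later in the paper (e.g.\ in deriving Lemma~\ref{lncp12.9.}), is the reverse inclusion
\[
B\bigl(f(z),\,4^{-1}|f'(z)|r\bigr)\;\subset\; f(B(z,r)),
\]
and this is exactly what your normalization plus the Bieberbach/omitted-value argument establishes. So your proof is correct for the intended (and correctly oriented) statement; just be aware that you are silently fixing a typo rather than proving the literal inclusion printed in the paper.
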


\begin{theorem}[Koebe's Distortion Theorem]\label{KDT} There exists a function  $k:[0,1)\to [1,\infty)$ such that for
	any $z\in\CC$, $ r >0$, $ t\in [0,1)$ and any univalent
	analytic function $f:B(z,r)\to\CC$ we have that
	$$
	\sup\{|f'(w)|:w\in B(z,tr)\} \leq k(t) \inf\{|f'(w)|:w\in
	B(z,tr)\}.
	$$
\end{theorem}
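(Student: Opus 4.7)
The plan is to reduce the statement to the normalized case of univalent functions on the unit disk and then invoke the classical Koebe distortion estimates. First I would observe that the hypothesis and the conclusion are both invariant under pre- and post-composition by affine maps: if $f:B(z,r)\to\CC$ is univalent, then $g(\zeta):=(f(z+r\zeta)-f(z))/(rf'(z))$ is univalent on $B(0,1)$ with $g(0)=0$ and $g'(0)=1$, and the ratio of the sup to the inf of $|f'|$ on $B(z,tr)$ equals the corresponding ratio of $|g'|$ on $B(0,t)$. This reduces the problem to producing a bound, uniform in such normalized $g$, on $\sup_{|w|\le t}|g'(w)|/\inf_{|w|\le t}|g'(w)|$.

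Next I would invoke (or if required, prove) the classical pointwise distortion bounds for a univalent function $g$ on the open unit disk with $g(0)=0$ and $g'(0)=1$, namely
\[
\frac{1-|w|}{(1+|w|)^{3}}\;\le\;|g'(w)|\;\le\;\frac{1+|w|}{(1-|w|)^{3}}\qquad(|w|<1).
\]
These inequalities are the standard consequence of Bieberbach's coefficient bound $|a_2|\le 2$ applied to the disk automorphism $\varphi_w(\zeta)=(\zeta+w)/(1+\bar w\zeta)$ composed with $g$; the Schwarz--Pick-type manipulation of the second Taylor coefficient of $g\circ\varphi_w$ yields an estimate on $g''/g'$, which one then integrates along radii from the origin. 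I would take this classical chain of estimates as known (standard references in geometric function theory), since writing it out in full is a routine calculation.

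Dividing the upper bound at one point $w_1\in B(0,t)$ by the lower bound at another $w_2\in B(0,t)$ gives
\[
\frac{|g'(w_1)|}{|g'(w_2)|}\;\le\;\frac{1+|w_1|}{(1-|w_1|)^{3}}\cdot\frac{(1+|w_2|)^{3}}{1-|w_2|}\;\le\;\frac{(1+t)^{4}}{(1-t)^{4}}.
\]
Setting $k(t):=(1+t)^{4}/(1-t)^{4}$ (or any nondecreasing majorant thereof) yields a function $k:[0,1)\to[1,\infty)$ that depends only on $t$, and undoing the affine normalization transfers the inequality to the original $f$ on $B(z,tr)$, giving the claim.

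The only genuinely non-trivial ingredient is the pointwise distortion estimate for normalized univalent maps, which rests on Bieberbach's $|a_2|\le 2$ coming from the area theorem; the rest of the argument is a direct reduction by affine change of variables and taking a sup/inf ratio. Since the paper presents Koebe's theorems as cited tools for later use rather than as a novel result, my proposal is to give the short reduction together with a pointer to the classical pointwise bounds, rather than to reproduce the area-theorem derivation in detail.
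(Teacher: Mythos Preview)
Your proposal is correct and is the standard derivation of Koebe's distortion theorem via Bieberbach's inequality and the area theorem. Note, however, that the paper does not prove this statement at all: it is quoted as a classical tool (alongside Koebe's $1/4$-theorem) and used only through the constant $K=k(1/2)$, so there is nothing in the paper to compare your argument against. Your own closing remark anticipates this; the reduction-by-affine-normalization plus the classical pointwise bounds you describe is exactly what a reader would supply if asked to justify the quoted theorem.
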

In the sequel when we refer to Koebe's distortion constant $K$, we mean $K=k(1/2)$.
The following lemma is a straightforward consequence of the previous two
distortion theorems.
\begin{lemma}\label{lncp12.9.} Suppose that $D\sub\CC$ is an open
set, $z\in D$ and $f:D\to\CC$ is an analytic map which has
an analytic inverse $f_z^{-1}$ defined on $B(f(z),2R)$ for some
$R>0$. Then, for every $0\leq r\leq R$
\begin{align*}
B(z,K^{-1}r|f'(z)|^{-1})\sub f_z^{-1}(B(f(z),r))\sub
B(z,Kr|f'(z)|^{-1}).
\end{align*}
\end{lemma}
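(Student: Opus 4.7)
The plan is to deduce both inclusions from the two Koebe theorems applied to the univalent map $g := f_z^{-1}$ on the ball $B(f(z),2R)$, using the identities $g(f(z)) = z$ and $g'(f(z)) = 1/f'(z)$ that follow from $f\circ g = \mathrm{id}$.

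For the outer inclusion $f_z^{-1}(B(f(z),r)) \subset B(z, Kr|f'(z)|^{-1})$, I would apply Koebe's distortion theorem (Theorem \ref{KDT}) to $g$ on $B(f(z),2R)$ at parameter $t = 1/2$. Since $B(f(z),R) = B(f(z),(1/2)\cdot 2R)$ contains $f(z)$, this controls $|g'|$ uniformly: $|g'(w)| \leq K|g'(f(z))| = K/|f'(z)|$ for every $w \in B(f(z),R)$. Because $r \leq R$, any $w \in B(f(z),r)$ can be joined to $f(z)$ by the straight segment lying entirely in $B(f(z),R)$, and integrating $|g'|$ along that segment yields $|g(w) - z| \leq Kr/|f'(z)|$.

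For the inner inclusion $B(z, K^{-1}r|f'(z)|^{-1}) \subset f_z^{-1}(B(f(z),r))$, I note that the restriction of $g$ to $B(f(z),r)$ is univalent (since $r \leq R \leq 2R$). Applying Koebe's $1/4$-theorem (Theorem \ref{KDT1/4}) to that restriction gives
$$g(B(f(z),r)) \supset B\!\left(z,\tfrac{1}{4}r\,|g'(f(z))|\right) = B\!\left(z, r/(4|f'(z)|)\right).$$
The required inclusion then reduces to the numerical comparison $K^{-1} \leq 1/4$, i.e.\ $K = k(1/2) \geq 4$, which is a standard property of the Koebe distortion constant (the extremal Koebe function $z/(1-z)^2$ saturates $k(1/2) = 81$).

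There is no real obstacle here: once one writes down the two Koebe theorems, the only thing to be careful about is matching constants, namely the observation that the constant $1/4$ produced by Theorem \ref{KDT1/4} is dominated by the constant $K^{-1}$ appearing in the statement. After that, each inclusion is a single application of a distortion theorem to $f_z^{-1}$ on an appropriate subball of its domain of univalence.
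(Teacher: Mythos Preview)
Your proof is correct and matches the paper's approach: the paper does not actually write out a proof but simply asserts that the lemma is ``a straightforward consequence of the previous two distortion theorems,'' which is exactly what you implement. The outer inclusion via integration of the distortion bound and the inner inclusion via the $1/4$-theorem together with the observation $K=k(1/2)\geq 4$ (indeed $k(1/2)\geq 81$ by the extremal Koebe function) is the intended argument.
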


Throughout the article, for $R>0$ we let $B_R$ denote the set given by
\begin{align*}
	B_R=\set{z\in\CC:\absval{z}>R},
\end{align*}
and for $z\in\CC$ we let $B(z,r)$ and $\ol{B}(z,r)$ denote the respectively open and closed disks of radius $r$ centered at $z$. 
We will also use the symbols $\comp$ and $\lesssim$ to denote comparable values, by which we mean that $A\comp B$ if and only if there is some constant $C\geq 1$ such that $C^{-1}A\leq B\leq CA$, and $A\lesssim B$ if and only if there is $C\geq 1$ such that $A\leq CB$.

\subsection{Non-Autonomous Dynamics}\label{sec: non-aut mero dynamics}

Let $\sF=\set{f_\om}_{\om\in\Om}$ be a family of meromorphic functions. Given a sequence $\om:=(\om_n)_{n\in\NN}$ in $\Om$, we define the $n$\textsuperscript{th} iterate of the function $F_\om:\CC\to\hat{\CC}$ by 
\begin{align*}
	F^n_\om:=f_{\om_n}\circ\dots \circ f_{\om_1}:\hat{\CC}\to\hat{\CC}.
\end{align*}
If $\om$ is understood, we will write $F^n$ instead of $F_\om^n$ and $f_n$ instead of $f_{\om_n}$. 

We let $\cF(F_\om)$ be the set of points in $\CC$ such that the iterates $(F_\om^n)_{n=1}^\infty$ are defined and form a normal family on some neighborhood, and let $\jl(F_\om)=\cF(F_\om)^c$. Then, $\cF(F_\om)$ and $\jl(F_\om)$ are the non-autonomous Fatou and Julia sets associated with the fiber $\om$, respectively.
By 
\begin{align*}
I_\infty(F_\om)=\set{z\in\jl(F_\om):\limty{n}F_\om^n(z)=\infty}
\end{align*}
we denote the subset of the Julia set whose points escape to infinity under iteration of $F_\om$. The non-autonomous \textit{radial Julia set} associated with a given sequence $\om$, denoted by $\jl_r(F_\om)$, is the set of all points $z\in\jl(F_\om)$ such that $F_\om^n(z)$ is defined for all $n\in\NN$ and there is some $\dl>0$ such that for infinitely many $n\in\NN$, the disk $B(F^n_\om(z),\dl) $ can be pulled back univalently along the orbit of $z$.
\begin{lemma}\label{lem: deriv to infty implies Julia}
	If $\sF=\seq{f_n}$ is a sequence of meromorphic functions and $\xi\in\CC$ is a point such that there exists a sequence $\xi_k\to\xi$, $\xi_k\neq\xi$, and there is a subsequence $\seq[j]{n_j}$ such that
	\begin{align*}
	\limty{j}\absval{(F^{n_j})'(\xi_k)}=\infty,
	\end{align*}
	where $\seq[j]{F^{n_j}(\xi_k)}$ is bounded for all $k\geq 1$, then $\xi\in\jl(F)$.
\end{lemma}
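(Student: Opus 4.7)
The approach is a short proof by contradiction using Marty's theorem. I would begin by assuming $\xi\in\cF(F)$. By the definition of the non-autonomous Fatou set given immediately above the lemma, this means there is an open neighborhood $U$ of $\xi$ on which every iterate $F^n$ is defined and on which the family $\{F^n\}_{n\geq 1}$ is normal. Choose a compact set $K\sub U$ containing $\xi$ in its interior; since $\xi_k\to\xi$, we have $\xi_k\in K$ for all sufficiently large $k$.

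Next I would apply Marty's theorem to conclude that the spherical derivatives $(F^n)^\#$ are uniformly bounded on $K$, say by some constant $M>0$. In particular, $(F^{n_j})^\#(\xi_k)\leq M$ for every $j$ and every sufficiently large $k$.

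The key step is to convert this spherical bound into a Euclidean bound using the boundedness hypothesis. Fix $k$ large enough that $\xi_k\in K$, and let $B_k$ be such that $|F^{n_j}(\xi_k)|\leq B_k$ for all $j$. Note that this boundedness rules out $F^{n_j}$ having a pole at $\xi_k$, so $(F^{n_j})'(\xi_k)$ is a finite complex number and the standard identity
\begin{align*}
(F^{n_j})^\#(\xi_k) = \frac{|(F^{n_j})'(\xi_k)|}{1+|F^{n_j}(\xi_k)|^2}
\end{align*}
applies. Combining with the spherical bound yields
\begin{align*}
|(F^{n_j})'(\xi_k)| \leq M\left(1+B_k^2\right),
\end{align*}
a finite bound independent of $j$. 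This directly contradicts the hypothesis $\limty{j}|(F^{n_j})'(\xi_k)|=\infty$, forcing $\xi\in\jl(F)$.

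There is no serious obstacle to this plan; the only minor point requiring attention is the observation that the boundedness of $F^{n_j}(\xi_k)$ in $j$ forces $F^{n_j}$ to be holomorphic (not merely meromorphic) at $\xi_k$, which is precisely what makes the spherical derivative identity deliver a Euclidean bound. The argument is essentially a clean application of Marty's theorem and does not rely on any of the deeper machinery introduced later in the paper.
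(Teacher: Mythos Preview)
Your argument is correct, but it follows a different route from the paper's proof. The paper argues by passing to a locally uniformly convergent subsequence of $\{F^{n_j}\}$ (which exists by normality) with holomorphic limit $g$, then uses Weierstrass's convergence theorem to get $(F^{n_j})'(\xi_k)\to g'(\xi_k)$, forcing $g'(\xi_k)=\infty$ and hence a contradiction. You bypass the limit-function extraction entirely by invoking Marty's theorem to obtain a uniform bound on the spherical derivatives over a compact neighborhood, and then use the boundedness of $F^{n_j}(\xi_k)$ to convert this into a Euclidean bound on $|(F^{n_j})'(\xi_k)|$. Your approach is arguably a bit cleaner: it avoids having to justify that the limit $g$ is holomorphic (rather than identically $\infty$ or meromorphic with poles) and sidesteps the ``not a Baker domain'' remark in the paper. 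The paper's approach, on the other hand, is slightly more elementary in that it uses only the definition of normality and Weierstrass's theorem rather than citing Marty's criterion. Both arguments are short and of comparable difficulty.
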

\begin{proof}
	By way of contradiction, suppose $\xi\in\cF(F)$. Then, there is some sufficiently small neighborhood $U\ni\xi$ such that the iterates $F^n\rvert_U:U\to\hat{\CC}$ are defined, meromorphic, and form a normal family on $U$. Note that $U$ must not be a Baker domain, as we have that the sequence $\seq[j]{F^{n_j}(\xi_k)}$ remains bounded by assumption. So, without loss of generality, suppose that $F^{n_j}\rvert_U$ converges uniformly to some holomorphic function $g:U\to\hat{\CC}$ with $g(\xi)\in\CC$ and $g'(\xi)\neq \infty$. 
	By assumption, for all $k$ sufficiently large we have that 
	\begin{align*}
	g'(\xi_k)=\limty{j}\absval{(F^{n_j})'(\xi_k)}=\infty.
	\end{align*}	
	So, letting $k\to\infty$ gives that $g'(\xi)=\infty$, 	 
	which is a contradiction. Thus, we must have that $\xi\in\jl(F)$.
\end{proof}	
Given an initial function $f_0$ and sequences $c=(c_n)_{n=1}^\infty$ and $\lm=(\lm_n)_{n=1}^\infty$ of complex numbers, we can define the non-autonomous additive and affine perturbation functions $F_{+,c}$ and $F_{A,\lm,c}$, respectively, by first defining
\begin{align*}
f_n(z)=f(z)+c_n\spand \hat{f}_n(z)=\lm_n\cdot f(z)+c_n
\end{align*} 
for each $n\in\NN$, and then letting 
\begin{align*}
F_{+,c}^n=f_n\circ\dots\circ f_1\spand F_{A,\lm,c}^n=\hat{f}_n\circ\dots\circ \hat{f}_1.
\end{align*}
If in context the sequences $c$ and $\lm$ are clear, we simply write $F_+$ and $F_A$.
\subsection{Statement of Results}
The goal of this article is to show that for sufficiently small perturbative values, the dimensions of the escaping and radial sets of non-autonomous additive and affine functions, $F_+$ and $F_A$, have the same upper and lower bounds as the escaping and radial sets for the original unperturbed function $f$. In other words, we may use the dimension of the autonomous dynamical system to estimate the dimension of the non-autonomous system. We now present our three main results which concern the non-autonomous perturbations of two large and distinct classes of meromorphic functions of finite order. Our first result generalizes the results of \cite{mayer_size_2007}.
\begin{theorem}\label{main thm: Mayer functions}
	Let $f_0$ be a meromorphic function of finite order $\rho$ and suppose that the following hold. 
	\begin{enumerate}
		\item There exists a pole $b$ of $f_0$  such that  $b\not\in\ol{\Sing(f_0^{-1})}$. Let $q$ be the multiplicity of $b$.
		\item There are constants $s_0>0$, $Q>0$ and $\al>-1-1/q$ such that 
		\begin{align*}
		\absval{f_0'(z)}\leq Q\absval{z}^{\al} \qquad for \qquad z\in f_0^{-1}(U_0), \absval{z}\to\infty,
		\end{align*} 
		where $U_0=B(b,s_0)$. 
	\end{enumerate}
	Then, there exist $\dl>0$ and $\ep>0$ such that if $\seq{\lm_n}$ and $\seq{c_n}$ are sequences in $\CC$ such that 
	\begin{align*}
		\lm_n,\lm_n^{-1}\in B(1,\dl)\spand \absval{c_n}<\ep
	\end{align*}
	for each $n\in\NN$, then
	\begin{align*}
		\HD(\jl_{r}(F_A))\geq \frac{\rho}{\al+1+1/q}.
	\end{align*}
	Moreover, if $f_0$ has infinitely many such poles, then 
	\begin{align*}
		\EHD(f_0)\geq \frac{\rho}{\al+1+1/q}.	
	\end{align*}
	If, in addition, $f_0$ is of divergence type, then this last inequality is in fact strict. 
\end{theorem}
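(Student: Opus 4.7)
The strategy is to construct a non-autonomous conformal iterated function system (NCIFS), in the sense of \cite{rempe-gillen_non-autonomous_2016,atnip_nonautonomous_2017}, whose limit set $\Lambda$ embeds into $\jl_r(F_A)$, and then to read off the dimension lower bound from the associated Bowen-type formula. The perturbative nature of the problem is handled by verifying that the small additive and multiplicative perturbations at each step preserve all essential inverse-branch estimates uniformly in $n$.

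First I would fix the domain $U_0=B(b,s_0)$, shrinking $s_0$ if necessary so that $\overline{U_0}$ is disjoint from $\overline{\Sing(f_0^{-1})}$. Since $\Sing(\hat{f}_n^{-1})=\lambda_n\,\Sing(f_0^{-1})+c_n$, for $\delta$ and $\epsilon$ small $\overline{U_0}$ remains disjoint from $\Sing(\hat{f}_n^{-1})$ uniformly in $n$. Consequently, for each $n$, each pole $a$ of $f_0$ of multiplicity $q$, and each $j=1,\dots,q$, there is a univalent inverse branch $\phi_{n,a,j}\colon U_0\to V_{n,a,j}$ of $\hat{f}_n$. Writing $f_0(z)=g_a(z)/(z-a)^q$ near $a$ and using $\hat{f}_n=\lambda_n f_0+c_n$, one obtains $|\hat{f}_n'(z)|\asymp|f_0'(z)|$ uniformly in $n$; combining this with the growth bound $|f_0'(z)|\leq Q|z|^\alpha$ and Koebe's Distortion Theorem \ref{KDT} yields the key diameter estimate
\begin{align*}
\mathrm{diam}(V_{n,a,j})\;\asymp\;|a|^{-(\alpha+1+1/q)}
\end{align*}
with implicit constants independent of $n$, $j$, and of the pole $a$ for $|a|$ large. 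The exponent decomposes into the growth contribution $\alpha$, the Koebe disk contribution $1$, and the multiplicity-reciprocal $1/q$; the hypothesis $\alpha>-1-1/q$ is exactly what makes this exponent positive, so that the branches genuinely contract as $|a|\to\infty$.

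With these generators in place, the standard NCIFS hypotheses (open set condition, bounded distortion, etc.) follow from the setup, and one appeals to the dimension theorem of \cite{rempe-gillen_non-autonomous_2016,atnip_nonautonomous_2017}. By Borel's theorem \eqref{Borel thm} applied to $a=\infty$ and the $M^\ast$-argument surrounding \eqref{def M*} (with $q$ in the role of $M^\ast$), the exponent of convergence of $\sum_{a\,:\,m(a)=q}|a|^{-s}$ is $\rho$. Hence the Bowen series
\begin{align*}
\sum_{a\,:\,m(a)=q}\sum_{j=1}^{q}\mathrm{diam}(V_{n,a,j})^{t}\;\asymp\;q\!\!\sum_{a\,:\,m(a)=q}\!\!|a|^{-t(\alpha+1+1/q)}
\end{align*}
diverges for every $t\leq\rho/(\alpha+1+1/q)$, uniformly in $n$. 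The NCIFS dimension theorem then delivers $\HD(\Lambda)\geq\rho/(\alpha+1+1/q)$, and a standard pullback argument combined with Lemma~\ref{lem: deriv to infty implies Julia} shows $\Lambda\subset\jl_r(F_A)$, yielding the first conclusion.

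For the eventual hyperbolic dimension, assume that infinitely many poles $b_k$ of $f_0$ satisfy hypotheses (1) and (2). Running the above construction in the autonomous case ($c_n\equiv 0$, $\lambda_n\equiv 1$) with $b=b_k$ produces hyperbolic sets $\Lambda_k$ for $f_0$ of $\HD(\Lambda_k)\geq\rho/(\alpha+1+1/q)$ clustered near $b_k$; since $|b_k|\to\infty$, for each $R>0$ we have $\Lambda_k\subset B_R$ for all large $k$, giving $\EHD(f_0)\geq\rho/(\alpha+1+1/q)$. If $f_0$ is additionally of divergence type, then $\sum_a|a|^{-\rho}=\infty$ at the critical exponent $\rho$, forcing the pressure series above to diverge strictly past $t=\rho/(\alpha+1+1/q)$; an $\epsilon$-improvement in the NCIFS bound then upgrades the inequality to strict. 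The main technical difficulty I anticipate is establishing the uniformity in $n$ of the implicit constants in the diameter asymptotics and the Koebe distortion estimates, so that the NCIFS machinery of \cite{rempe-gillen_non-autonomous_2016,atnip_nonautonomous_2017} applies as a black box rather than degenerating under the non-autonomous perturbations.
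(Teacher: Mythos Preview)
Your proposal has a genuine structural gap: you are indexing the generators of the NCIFS by the \emph{poles} $a$ of $f_0$ of multiplicity $q$, but the hypotheses of the first part only give you a \emph{single} pole $b$. The infinitely many objects that actually drive the construction are the $b$-points $z_m^{(0)}\in f_0^{-1}(b)$, not the poles. Borel's theorem is applied at the value $a=b$, so that $\sum_m|z_m^{(0)}|^{-t}$ has exponent of convergence $\rho$; your invocation of Borel at $a=\infty$ together with the $M^\ast$-argument would only tell you that \emph{some} multiplicity $M^\ast$ (not necessarily $q$) carries exponent $\rho$, and even that requires infinitely many poles, which is not assumed here.

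Relatedly, your maps $\phi_{n,a,j}\colon U_0\to V_{n,a,j}$ do not form an IFS on $U_0$: they send $U_0$ \emph{out} to neighborhoods near the preimages, not back into $U_0$. The paper fixes this by a two-step composition. First, $\phi_m^{(2n)}\colon U_1\to\phi_m^{(0)}(U_0)$ is an inverse branch of $\hat f_{2n}$ landing near the $b$-point $z_m^{(0)}$; here the growth bound $|f_0'(z)|\le Q|z|^\alpha$ applies (since $z\in f_0^{-1}(U_0)$), contributing the factor $|z_m^{(0)}|^{-\alpha}$. Second, one uses that $b$ is a pole of multiplicity $q$: the map $f_0\colon U_0\setminus\{b\}\to V$ is a $q$-fold cover of a punctured neighborhood of $\infty$, so there is an inverse branch $\psi_m^{(2n-1)}$ of $\hat f_{2n-1}$ taking the neighborhood of $z_m^{(0)}$ back into $U_1$, with derivative $\asymp|z_m^{(0)}|^{-(1+1/q)}$ coming from $|f_0'(w)|\asymp|f_0(w)|^{1+1/q}$ near $b$. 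The composition $\gamma_m^{(n)}=\psi_m^{(2n-1)}\circ\phi_m^{(2n)}\colon\overline U_1\to\overline U_1$ is then a genuine self-map with derivative $\asymp|z_m^{(0)}|^{-(\alpha+1+1/q)}$, and the NCIFS is stationary on $\overline U_1$. Your exponent decomposition $\alpha+1+1/q$ is morally right, but the $\alpha$ and the $1+1/q$ come from \emph{different} inverse branches applied at \emph{different} places, not from a single branch near a pole $a$. Once you make this correction, the rest of your outline (uniformity in $n$ of the constants, Lemma~\ref{lem: deriv to infty implies Julia} to land in $\jl_r(F_A)$, and the $|b_k|\to\infty$ argument for $\EHD$) lines up with the paper.
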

The following two theorems generalize the results of \cite{kotus_hausdorff_2008}, the first of which concerns the dimension of $I_\infty(F_+)$, while the second is concerned with the dimension of the radial Julia set $\cJ_r(F_A)$.

\begin{theorem}\label{main thm: escaping set KU}
	Let $f_0:\CC\to\hat{\CC}$ be a transcendental meromorphic function of finite order $\rho>0$ such that the following hold.
	\begin{enumerate}
		\item $\infty$ is not an asymptotic value of $f_0$.
		\item \label{main hyp 3} There exists a number $R^*>0$ and a co-finite subset $\cP^*\sub\cP$, i.e. $\cP\bs\cP^*$ is finite, such that 
		\begin{align*}
		\dist{\Sing(f^{-1})}{a}>R^*
		\end{align*} 
		for all $a\in\cP^*$.
		\item There exists $R^\dagger>0$ such that for distinct poles $a_1,a_2\in\cP$ we have 
		$$
			B(a_1,R^\dagger)\cap B(a_2,R^\dagger)=\emptyset.
		$$
		\item There exist $M\in\NN$ and $\bt\geq0$ such that for each pole $a\in\cP^*$ we have $1\leq m(a)\leq M$ and \label{main hyp 4}
		\begin{align*}
		\absval{f_0(z)}\comp\frac{\absval{a}^{-\bt}}{\absval{z-a}^{m(a)}} \quad and \quad 
		\absval{f_0'(z)}\comp\frac{m(a)\absval{a}^{-\bt}}{\absval{z-a}^{m(a)+1}}
		\end{align*}
		for $z\in B(a,R^\dagger)$.
	\end{enumerate}
	Then, there is $\ep>0$ such that if $\seq{c_n}$ is a sequence in $\CC$ with $\absval{c_n}<\ep$ for all $n\in\NN$, then 
	\begin{align*}
	\frac{\rho M^*}{\bt+M^*+1}&\leq
	\begin{cases}
	\HD(I_\infty(F_+)) \\
	\EHD(F_+) 
	\end{cases} 
	\leq \ED(F_+)\leq \frac{\rho M}{\bt+M+1},
	\end{align*}
	where $1\leq M^*\leq M$ is defined by \eqref{def M*}.
\end{theorem}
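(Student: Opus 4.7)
The two-sided bound naturally splits into the lower bound, which I would obtain by constructing a non-autonomous conformal iterated function system (NCIFS) whose limit set sits inside $I_\infty(F_+)\cap \jl_r(F_+)$, and the upper bound, which I would obtain by a direct covering argument for the eventual dimension $\ED(F_+)$. Since the remaining inequalities in the display follow from $\HD(I_\infty(F_+))\le \ED(F_+)$ (because every orbit in $I_\infty(F_+)$ is eventually in $B_R$) and from $\EHD(F_+)\le \ED(F_+)$ (since hyperbolic sets in $B_R$ are contained in the relevant escaping-orbit set), proving the two extreme inequalities closes the theorem.

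For the lower bound, I would fix a large $R>0$, let $\cP^\dagger=\{a\in\cP^*\cap m^{-1}(M^*):|a|>R\}$, and use hypothesis (\ref{main hyp 4}) together with hypothesis (3) and Iversen's theorem (Lemma~\ref{iversen lemma}) to guarantee that $\cP^\dagger$ is infinite. Given $\ep$ small enough that all perturbed functions $f_n=f_0+c_n$ still satisfy the same pole structure with disjoint disks of radius comparable to $R^\dagger$ and with $\mathrm{dist}(\Sing(f_n^{-1}),\cP^*)>R^*/2$, I would construct, for each pole $a\in\cP^\dagger$ and each $n\in\NN$, a univalent inverse branch $\phi_{a,n}:B(0,r)\to B(a,R^\dagger/2)$ of $f_n$, possible by a Koebe $1/4$ argument using hypothesis (1) for the unperturbed function (stability in $n$). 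Hypothesis (\ref{main hyp 4}) combined with Lemma~\ref{lncp12.9.} gives the uniform-in-$n$ derivative estimate
\begin{align*}
|\phi_{a,n}'(z)|\comp |a|^{-(\bt+M^*+1)/M^*}
\end{align*}
for $z$ in a bounded base disk, together with uniformly bounded Koebe-type distortion. The NCIFS $\Phi=\{\phi_{a,n}\}_{a\in\cP^\dagger,\, n\in\NN}$ then falls within the framework of \cite{rempe-gillen_non-autonomous_2016,atnip_nonautonomous_2017}, and its Bowen exponent is determined by the critical value $t$ for which the pressure series
\begin{align*}
\sum_{a\in\cP^\dagger}|a|^{-t(\bt+M^*+1)/M^*}
\end{align*}
changes convergence behavior; by \eqref{def M*} this happens exactly at $t^*=\rho M^*/(\bt+M^*+1)$. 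The NCIFS limit set therefore has Hausdorff dimension at least $t^*$. Because each composition $\phi_{a_1,1}\circ\cdots\circ\phi_{a_n,n}$ lands near the pole $a_n$, every limit point escapes to infinity under $F_+$, placing the limit set in $I_\infty(F_+)$; the uniform disk on which the pullbacks are defined places it in $\jl_r(F_+)$. Since $R$ was arbitrary, the same construction simultaneously gives $\EHD(F_+)\ge t^*$, as each finite truncation of $\Phi$ is a hyperbolic set lying in $B_R$.

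For the upper bound, I would cover the set $\{z\in\jl(F_+):|F_+^n(z)|>R\text{ for all }n\ge 1\}$ by Whitney-type preimages. At each step $n$, hypothesis (\ref{main hyp 4}) shows that $f_n^{-1}$ restricted to a ball $B(w,r)$ with $|w|>R$ consists, near each pole $a$ of multiplicity $m(a)\leq M$, of a component of diameter $\comp |a|^{-\bt/(m(a)+1)}|w|^{-(m(a)+1)/m(a)}r^{1/m(a)}$, and summing $(\mathrm{diam})^t$ over all poles $a\in\cP^*$ gives, for the dominant contribution from $m(a)=M$, a series bounded by a constant multiple of $\sum_{a\in\cP}|a|^{-t(\bt+M+1)/M}$, which converges whenever $t>\rho M/(\bt+M+1)$. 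Iterating this covering step and using Koebe distortion to control compositions yields $\ED(F_+)\leq \rho M/(\bt+M+1)$. The principal obstacle, pervading both halves, is to carry out the construction uniformly in $n$ under the perturbation $c_n$: the smallness of $\ep$ must be chosen so that the distance of singular values to poles of $f_n$, the disjointness of the pole disks, and the comparability constants in hypothesis (\ref{main hyp 4}) are preserved with uniform constants for every $n$. Once this uniform control is in place, the NCIFS machinery and the covering argument apply essentially as in the autonomous setting of \cite{kotus_hausdorff_2008}.
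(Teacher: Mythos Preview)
Your upper-bound covering argument is essentially the paper's (modulo wrong exponents in your diameter formula; it should be $\comp |a|^{-\bt/m(a)}|w|^{-(m(a)+1)/m(a)}r$, and the telescoping that produces the exponent $(\bt+M+1)/M$ only appears once you compose pole-to-pole so that $|w|$ becomes the next $|a_i|$). The lower bound, however, has a genuine gap. As described, your maps $\phi_{a,n}:B(0,r)\to B(a,R^\dagger/2)$ do not compose: the image of $\phi_{a_2,2}$ lies near the pole $a_2$, not in the domain $B(0,r)$ of $\phi_{a_1,1}$. Your derivative estimate $|\phi_{a,n}'(z)|\comp|a|^{-(\bt+M^*+1)/M^*}$ is also wrong for $z$ in a bounded base disk; (KU4) gives $|\phi_{a,n}'(z)|\comp|z|^{-(M^*+1)/M^*}|a|^{-\bt/M^*}$, and your exponent arises only when $|z|\comp|a|$, i.e.\ when the argument is already near a pole of comparable modulus. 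Most seriously, even if you repair the system to go pole-to-pole, using the \emph{same} alphabet $\cP^\dagger$ at every time does not force escape: a limit point coded by a bounded sequence of poles has a bounded $F_+$-orbit and lies outside $I_\infty(F_+)$, so there is no reason the escaping subset of your limit set has the claimed dimension.

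The paper's key idea, absent from your sketch, is a genuinely \emph{non-stationary} NCIFS on a moving sequence of disks $\ol B(a_k,S)$ with $|a_k|\to\infty$, and with \emph{finite} time-dependent alphabets $I^{(n)}$ that grow by at most one letter per step (hence subexponentially, so Bowen's formula from Theorem~\ref{BF for NCIFS} applies) and are designed so that old poles are eventually discarded in favour of larger ones. One recursively chooses cut-offs $\xi_k$ so that $\sum_{j=\xi_{k-1}+1}^{\xi_k}|a_j|^{-t(\bt+M^*+1)/M^*}$ dominates a fixed power of $|a_k|$; at the corresponding times the generators are two- or three-fold compositions of inverse branches bouncing between $B(a_k,S)$ and $B(a_i,S)$ for $\xi_{k-1}<i\le\xi_k$, and every so often a transition step moves the base disk from $\ol B(a_k,S)$ to $\ol B(a_{k-1},S)$. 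The construction must simultaneously (i) force the coded orbit through poles of increasing modulus, (ii) keep each $I^{(n)}$ finite and subexponentially bounded, and (iii) retain enough branching that $Z_{(n)}(t)\ge 2K^t$ for all $t<\rho M^*/(\bt+M^*+1)$. Balancing these three constraints is the heart of the lower-bound proof.
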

\begin{theorem}\label{main thm KU hyper}
	If $f_0$ has infinitely many poles and satisfies hypotheses \eqref{main hyp 3}-\eqref{main hyp 4} from the previous theorem, then for each $0\leq t<\frac{\rho M^*}{\bt +M^*+1}$ there exist $\ep_t,\dl_t>0$ such that if $c_t=\seq{c_n}$ and $\lm_t=\seq{\lm_n}$ are sequences in $\CC$ such that $\absval{c_n}<\ep_t$ and $\lm_n,\lm_n^{-1}\in B(1,\dl_t)$ for each $n\in\NN$, then
	\begin{align*}
	\HD(\jl_{r}(F_{A,\lm_t,c_t}))\geq\EHD(F_{A,\lm_t,c_t})\geq t.
	\end{align*}
	If, in addition, $f_0$ is of divergence type, then there exists $\ep,\dl>0$, no longer depending on $t$, such that if $c=\seq{c_n}$ and $\lm=\seq{\lm_n}$ are sequences in $\CC$ such that $\absval{c_n}<\ep$ and $\lm_n,\lm_n^{-1}\in B(1,\dl)$ for each $n\in\NN$, then we have
	\begin{align*}
	\HD(\jl_{r}(F_{A,\lm,c}))\geq\EHD(F_{A,\lm,c})\geq \frac{\rho M^*}{\bt +M^*+1}.
	\end{align*}
\end{theorem}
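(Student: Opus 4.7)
The plan is to construct an infinite non-autonomous conformal iterated function system (NCIFS), as developed in Section~\ref{Sec: NCIFS}, whose limit set sits inside $\jl_{r}(F_{A,\lm,c})$ and lies in a non-autonomous hyperbolic set at infinity, and whose non-autonomous Bowen sum diverges at exponent $t$. The general dimension theorem for NCIFSs then yields both lower bounds simultaneously.

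The starting observation is that because $\lm_n\neq 0$, the poles of $\hat f_n=\lm_n f_0+c_n$ coincide with those of $f_0$ with the same multiplicities, and $\hat f_n'=\lm_n f_0'$. Provided $\dl_t,\ep_t$ are small, hypotheses~\eqref{main hyp 3}-\eqref{main hyp 4} therefore persist for $\hat f_n$ uniformly in $n$. Fix $t<\rho M^*/(\bt+M^*+1)$, pick a large threshold $R_0$, and set $\cP^\dagger_{R_0}:=\set{a\in\cP^*\cap m^{-1}(M^*):\absval{a}>R_0}$. For each such $a$ and each $n$, hypothesis~\eqref{main hyp 3} together with Lemma~\ref{lncp12.9.} produces a univalent inverse branch $\hat f_{n,a}^{-1}$ from a fixed neighborhood of infinity $B_S$ into $B(a,R^\dagger)$, and the asymptotics of hypothesis~\eqref{main hyp 4} applied to $\hat f_n$ give, for $w\in B_S$ and $z=\hat f_{n,a}^{-1}(w)$,
\begin{align*}
\absval{z-a}\comp \absval{a}^{-\bt/M^*}\absval{w}^{-1/M^*},
\qquad
\absval{(\hat f_{n,a}^{-1})'(w)}\comp \absval{a}^{-\bt/M^*}\absval{w}^{-(M^*+1)/M^*},
\end{align*}
with constants independent of $n$. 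Composing such inverse branches along sequences in $\cP^\dagger_{R_0}$ yields an NCIFS on $B_S$ whose limit set escapes to infinity under $F_{A,\lm,c}$ with uniformly bounded Koebe distortion, and hence is contained in $\jl_{r}(F_{A,\lm,c})$.

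The level-$n$ Bowen sum at exponent $t$ then satisfies
\begin{align*}
\sum_{a\in\cP^\dagger_{R_0}}\sup_{B_S}\absval{(\hat f_{n,a}^{-1})'}^{t}
\comp \sum_{a\in\cP^\dagger_{R_0}}\absval{a}^{-t(\bt+M^*+1)/M^*},
\end{align*}
uniformly in $n$; because $t(\bt+M^*+1)/M^*<\rho$, the defining property of $M^*$ in~\eqref{def M*} forces divergence, and the NCIFS dimension machinery of Section~\ref{Sec: NCIFS} delivers $\HD(\jl_{r}(F_{A,\lm,c}))\geq \EHD(F_{A,\lm,c})\geq t$. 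When $f_0$ is additionally of divergence type, the series $\sum_{a\in\cP^*\cap m^{-1}(M^*)}\absval{a}^{-\rho}$ itself diverges, so the construction runs at the critical exponent $t_*=\rho M^*/(\bt+M^*+1)$ for a single fixed $R_0$, giving a single $\ep,\dl$ valid for all $t\leq t_*$. The principal technical obstacle will be uniform-in-$n$ control of the Koebe distortion constants and of the implicit constants in~\eqref{main hyp 4} under the perturbation; a secondary subtlety is that in the non-divergence case the threshold $R_0$ must blow up as $t\uparrow t_*$, which is precisely why the admissible $\ep_t,\dl_t$ must depend on $t$.
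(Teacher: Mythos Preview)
Your plan diverges from the paper's argument in a way that creates a genuine gap.  The paper does \emph{not} build an infinite NCIFS on a neighborhood of infinity; it builds a \emph{finite, stationary} NCIFS on a single compact disk $\ol{B}(a_0,S)$ around one fixed pole $a_0$, using two-step compositions
\[
\phi_a^{(n)}:=f_{2n-1,a_0,a,1}^{-1}\circ f_{2n,a,a_0,1}^{-1}:\ol{B}(a_0,S)\to\ol{B}(a_0,S)
\]
indexed by a \emph{finite} set of poles $I=\{a_1,\dots,a_{N_t}\}$, chosen so that the partial Borel sum already exceeds the required threshold.  Bowen's formula from Section~\ref{Sec: NCIFS} then applies directly, and the limit set is visibly in $\cJ_r(F_A)$ because orbits return to $\ol{B}(a_0,S)$ every two steps.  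The EHD statement follows afterward by pushing $a_0$ (equivalently $R_2$) out to infinity.

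Your infinite-alphabet construction on $B_S$ breaks for the affine perturbation.  The inverse branch of $\hat f_n$ near a pole $a$, evaluated on the disk $B(a',S)$ around another pole $a'$, is $\hat f_{n,a}^{-1}(z)=f_{0,a}^{-1}\bigl((z-c_n)/\lm_n\bigr)$, and this is only defined when $(z-c_n)/\lm_n$ lands back in the domain of $f_{0,a}^{-1}$, which forces a bound of the shape
\[
(1+\dl)\bigl(\dl(1+\absval{a'})+S\bigr)<\tfrac{S^*}{2}.
\]
This is exactly the inequality the paper isolates, and it requires $\dl\lesssim 1/\absval{a'}$.  If you use all poles above a threshold $R_0$---an unbounded family---no positive $\dl$ can work, and your system collapses to the purely additive case.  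This is the real reason $\ep_t,\dl_t$ depend on $t$: as $t\uparrow t_*$ one needs a larger finite pole set (larger $N_t$, hence larger $\absval{a_{N_t}}$), which forces $\dl_t\to 0$.  Your diagnosis that ``$R_0$ must blow up'' is backwards; $R_0$ is a lower threshold and can stay fixed---what grows is the \emph{upper} cutoff on the poles you are allowed to use.  In the divergence-type case the point is precisely that a single fixed $N_{t_*}<\infty$ already makes the partial sum large enough, so $\absval{a_{N_{t_*}}}$ is finite and one pair $\ep,\dl$ suffices.

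There are two secondary issues.  First, Theorem~\ref{BF for NCIFS} is stated only for finite, subexponentially bounded systems on compact sets, so an infinite system on the non-compact, non--simply-connected set $B_S$ is outside the machinery you cite.  Second, your claim that the limit set ``escapes to infinity'' is not what you want for $\cJ_r$: the paper's two-step bounce guarantees bounded recurrence to a fixed disk, which is exactly the radial condition.
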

\begin{remark}\label{rem: main thm}
	Notice that Theorem \ref{main thm: escaping set KU} fully characterizes the dimension of the set $I_\infty$ for any meromorphic function which, in addition to satisfying the hypotheses of the theorem, has co-finitely many poles, all of which have the same multiplicity, in other words, $M^*=M$. This gives the following corollary.
\end{remark}
\begin{corollary}\label{cor: summary}
	Suppose $f_0$ satisfies the hypotheses of Theorem \ref{main thm: escaping set KU}. If we have that $M=M^*$, then there is $\ep>0$ such that if $\seq{c_n}$ is a sequence in $\CC$ with $\absval{c_n}<\ep$ for all $n\in\NN$, then 
	\begin{align*}
	\HD(I_\infty(F_+))=\EHD(F_+)=\ED(F_+)=\frac{\rho M}{\bt+M+1}\leq\HD(\cJ_r(F_+)).
	\end{align*}
	If, in addition, $f_0$ is of divergence type, then there are $\ep,\dl>0$ such that if $\seq{c_n}$, $\seq{\lm_n}$ are sequences in $\CC$ with $\absval{c_n}<\ep$ and $\lm_n,\lm_n^{-1}\in B(1,\dl)$ for all $n\in\NN$, then 
	\begin{align*}
	\HD(I_\infty(F_+))=\EHD(F_+)=\ED(F_+)=\frac{\rho M}{\bt+M+1}\leq \EHD(F_A)\leq\HD(\cJ_r(F_A)).
	\end{align*}
\end{corollary}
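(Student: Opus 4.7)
The plan is to deduce this corollary directly from Theorems~\ref{main thm: escaping set KU} and~\ref{main thm KU hyper}, together with the elementary inequality $\EHD(\cdot)\leq\HD(\cJ_r(\cdot))$. For the first assertion I would fix $\ep_1>0$ as provided by Theorem~\ref{main thm: escaping set KU}. Under the standing hypothesis $M^*=M$, the lower and upper bounds in that theorem coincide at $\rho M/(\bt+M+1)$, so the chain
\begin{align*}
\frac{\rho M^*}{\bt+M^*+1}\leq \HD(I_\infty(F_+)),\, \EHD(F_+)\leq \ED(F_+)\leq \frac{\rho M}{\bt+M+1}
\end{align*}
collapses and yields the three equalities asserted, whenever $\absval{c_n}<\ep_1$ for all $n\in\NN$.

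The trailing inequality $\EHD(F_+)\leq\HD(\cJ_r(F_+))$ would then follow from the fact that every hyperbolic set $X\subset B_R$ entering the definition of $\EHD(F_+)$ is automatically contained in the radial Julia set $\cJ_r(F_+)$: hyperbolicity provides the univalent backward pullback of a definite-radius ball along the orbit of each point of $X$, which is exactly the defining property of $\cJ_r$. Hence $\HD(X)\leq\HD(\cJ_r(F_+))$ for every such $X$, and taking the supremum over $X$ and the limit $R\to\infty$ gives $\EHD(F_+)\leq\HD(\cJ_r(F_+))$, completing the first assertion.

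For the divergence-type case I would invoke the second half of Theorem~\ref{main thm KU hyper}, which produces $\ep_2,\dl>0$ such that
\begin{align*}
\HD(\cJ_r(F_{A,\lm,c}))\geq \EHD(F_{A,\lm,c})\geq \frac{\rho M^*}{\bt+M^*+1}=\frac{\rho M}{\bt+M+1}
\end{align*}
whenever $\absval{c_n}<\ep_2$ and $\lm_n,\lm_n^{-1}\in B(1,\dl)$. Setting $\ep$ to be the minimum of $\ep_1$ and $\ep_2$ and keeping the same $\dl$ allows both theorems to apply simultaneously. Since $F_+$ is unaffected by the multiplicative perturbations $\lm_n$, the equalities from the first assertion remain valid; chaining them with the bounds from the divergence-type case produces the full statement. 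Overall the corollary is a bookkeeping exercise that packages the case $M=M^*$ of the two main theorems together with the general inequality $\EHD\leq\HD(\cJ_r)$, so I do not anticipate any substantive obstacle; the only point requiring attention is the compatibility of the constants $\ep,\dl$ across the two theorems, which is handled by taking minima.
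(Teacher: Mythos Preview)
Your proposal is correct and mirrors the paper's own argument: the paper states that Corollary~\ref{cor: summary} follows from Theorems~\ref{main thm: escaping set KU} and~\ref{main thm KU hyper} together with the inequality \eqref{EHD ineq}, which is exactly the combination you invoke, including the step of taking a common $\ep$. One minor remark: in the non-autonomous setting the paper defines $\EHD$ directly via the radial Julia set (see the remark following the $\EHD_1=\EHD_2$ theorem), so the inequality $\EHD(F_+)\leq\HD(\cJ_r(F_+))$ is immediate from the definition rather than requiring the hyperbolic-set argument you sketch, but your reasoning is sound in the autonomous case and the conclusion is the same.
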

\begin{remark}
	An alternate interpretation of Corollary~\ref{cor: summary} is that the Hausdorff dimension of the set $I_\infty(f_0)$ is stable under sufficiently small additive perturbations. In particular we see that for a function $f_0$ meeting the hypotheses of Corollary~\ref{cor: summary} there is $\ep>0$ such that for any $0<c<\ep$, we have 
	\begin{align*}
		\HD(I_\infty(f_0))=\HD(I_\infty(f_0+c))=\frac{\rho M}{\bt+M+1}.
	\end{align*} 
\end{remark}
\section{Non-Autonomous Conformal Iterated Function Systems}\label{Sec: NCIFS}
The main technique used throughout this article will be to build a non-autonomous iterated function system whose limit set sits comfortably within the Julia set. We now recall some properties of non-autonomous IFSs. 

\begin{definition}
	For each $n\geq 0$, we let $X_n\sub\RR^d$ be a compact, connected set which is regularly closed, i.e. 
	$$
		X_n=\ol{\intr{X_n}}.
	$$ 
	A \textit{non-autonomous conformal iterated function system} (NCIFS) $\Phi$ on the sequence $(X_n)_{n=0}^\infty$ is a sequence $\Phi^{(1)}, \Phi^{(2)}, \dots$ where for each $n\geq 1$, 
	$$
		\Phi^{(n)}=\set{\phi_i^{(n)}:X_n\to X_{n-1}\;: i\in I^{(n)}}
	$$ 
	is a countable collection of contractions such that the following seven conditions are satisfied.
	\begin{itemize}
		\item (Open Set Condition) For all $j\in\NN$ and all $a\neq b\in I^{(n)}$ we have 
		\begin{align*}
		\phi_a^{(n)}(\intr{X_n})\cap\phi_b^{(n)}(\intr{X_n})=\emptyset.
		\end{align*}
		
		\item (Conformality) There exists an open connected set $W_n\bus X_n$ (independent of $i\in I^{(n)}$) such that each map $\phi_i^{(n)}$ extends to a $C^1$ conformal diffeomorphism of $W_n$ into $W_{n-1}$. 
		
		\item (Bounded Distortion) There exists a constant $K\geq 1$ such that for any $k\leq \ell$ and any word $\om_k\om_{k+1}\dots\om_\ell$ with $\om_j\in I^{(j)}$ for each $k\leq j\leq \ell$, the map $\phi=\phi_{\om_k}^{(k)}\circ\dots\circ\phi_{\om_\ell}^{(\ell)}$ satisfies
		\begin{align*}
		\absval{D\phi(x)}\leq K\absval{D\phi(y)}
		\end{align*}
		for all $x,y\in W_n$.
		
		\item (Uniform Contraction) There is a constant $\beta<1$ such that 
		\begin{align*}
		\absval{D\phi(x)}<\beta^m
		\end{align*}
		for all sufficiently large $m\in\NN$, all $x\in X_n$, and all maps $\phi=\phi_{\om_j}^{(j)}\circ\dots\circ\phi_{\om_{j+m}}^{(j+m)}$ where $j\geq 1$ and $\om_k\in I^{(k)}$ for each $k$. 	
		
		\item (Geometry Condition): 	There exists $N\in\NN$ such that for all $n\in\NN$ there exist $\Gm_1^{(n)},\dots\Gm_N^{(n)}\sub W_{n}$ such that each of the $\Gm_j^{(n)}$ are convex and 
		\begin{align*}
		X_{n}\sub\union_{j=1}^N\Gm_j^{(n)}.
		\end{align*}
		We also suppose there exists $\vta>0$ such that for each $x\in X_n$ we have that
		\begin{align*}
			B(x,\vta\cdot\diam(X_{n}))\sub W_{n}.
		\end{align*}		
		
		\item (Uniform Cone Condition): There exist $\al,\gm>0$ with $\gm<\frac{\pi}{2}$ such that for every $n\in\NN$ and every $x\in X_{n}$ there is an open cone 
		\begin{align*}
			Con(x,u_x,\gm,\al\cdot\diam(X_{n}))\sub\intr{X_{n}}
		\end{align*} 
		with vertex $x$, direction vector $u_x$, central angle of measure $\gm$, and altitude $\al\cdot\diam(X_{n})$ comparable to $\diam(X_{n})$.  
		
		\item (Diameter Condition): For each $n\in\NN$ we have
		\begin{align*}
			\limty{n}\frac{1}{n}\log \diam(X_{n})=0 \spand \limty{n}\frac{1}{n}\sup_{k\geq 0}\log\frac{\diam(X_{k+n})}{\diam(X_{k})}=0.
		\end{align*}	
	\end{itemize}
\end{definition}
\begin{definition}
	A NCIFS $\Phi$ is called \textit{stationary} if the sequence of sets $(X_n)_{n=0}^\infty$ is constant, i.e. if $X_n=X_m$ for all $n,m\geq 0$. To emphasize when a particular NCIFS is not stationary, we will call that system \textit{non-stationary}. 
	The system $\Phi$ is called \textit{finite} if the collections $\Phi^{(n)}$ are finite for each $n$, and \textit{infinite} otherwise. $\Phi$ is said to be \textit{uniformly finite} if there is a constant $M>0$ such that $\#I^{(n)}<M$ for each $n\in\NN$. 
\end{definition}
\begin{remark}
	Notice that if each of the spaces $X_n$ is convex, then the Uniform Cone Condition and the Geometry Condition hold. Furthermore, Koebe's Distortion Theorem implies that the Bounded Distortion Property holds for dimension $d=2$. If the system $\Phi$ is in fact a stationary NCIFS, then the Uniform Cone Condition, Geometry Condition, and Diameter Condition are automatically satisfied.  
\end{remark}
The limit set of a NCIFS $\Phi$ is defined as 
\begin{align*}
		J_\Phi:=\intersect_{n=1}^\infty\union_{\om\in I^n}\phi_\om(X_n)\sub X_0,
\end{align*}
where 
$$
	\phi_\om:=\phi_{\om_1}^{(1)}\circ\dots\circ\phi_{\om_n}^{(n)}.
$$
For each $n\in\NN$ and $0\leq t\leq d$ we define the potential functions
\begin{align*}
	Z_n(t)=\sum_{\om\in I^n}\norm{(\phi_\om)'}^t \quad \text{ and }\quad Z_{(n)}(t)=\sum_{i\in I^{(n)}}\norm{(\phi_i^{(n)})'}^t,
\end{align*}
where we take $\norm{\spot}$ to denote the sup norm.
Bounded distortion implies that
\begin{align}\label{eqn: Zn ineq}
	Z_n(t)\geq K^{-nt}Z_{(1)}(t)\cdots Z_{(n)}(t).
\end{align}
The lower pressure function can then be defined as 
\begin{align*}
	\ul{P}(t)=\liminfty{n}\frac{1}{n}\log Z_n(t).
\end{align*}
We say that \textit{Bowen's formula holds} for the system $\Phi$ if the Hausdorff dimension of the limit set coincides with the Bowen dimension of the limit set, that is if 
\begin{align*}
	\HD(J_\Phi)=B_\Phi,
\end{align*}
where the Bowen dimension $B_\Phi$ is given by
\begin{align*}
	B_\Phi&:=\sup\set{t\geq 0:\ul{P}(t)\geq 0}=\inf\set{t\geq 0: \ul{P}(t) \leq 0}=\sup\set{t\geq 0:Z_n(t)\to\infty}.	
\end{align*}
\begin{definition}
	We say that a NCIFS $\Phi$ is \textit{subexponentially bounded} if 
	\begin{align*}
		\lim_{n\to\infty}\frac{1}{n}\log\#I^{(n)}=0.
	\end{align*}
\end{definition}
In the sequel we will make use of the following theorem.
\begin{theorem}\label{BF for NCIFS}
	If $\Phi$ is a finite, subexponentially bounded NCIFS, then Bowen's formula holds.
\end{theorem}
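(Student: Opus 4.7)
The plan is to prove $\HD(J_\Phi) \leq B_\Phi$ and $\HD(J_\Phi) \geq B_\Phi$ separately, using the natural symbolic cylinder cover $\set{\phi_\om(X_n): \om \in I^n}$ for the upper bound and a Mauldin--Urba\'nski-style Frostman measure on $J_\Phi$ for the lower bound.

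For the upper bound, I would fix $t > B_\Phi$. The definition of $B_\Phi$ together with the fact that $\ul{P}$ is nonincreasing gives $\ul{P}(t) < 0$, so along a subsequence $n_k \to \infty$ one has $Z_{n_k}(t) \leq e^{-c n_k}$ for some $c>0$. The family $\set{\phi_\om(X_{n_k}): \om \in I^{n_k}}$ covers $J_\Phi$, and Bounded Distortion gives $\diam(\phi_\om(X_{n_k})) \lesssim \norm{(\phi_\om)'}\diam(X_{n_k})$. Uniform Contraction forces the mesh of this cover to tend to zero, while the Diameter Condition says $\diam(X_n) = e^{o(n)}$, so that $\sum_{\om \in I^{n_k}} \diam(\phi_\om(X_{n_k}))^t \lesssim \diam(X_{n_k})^t Z_{n_k}(t) \to 0$. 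Hence the $t$-dimensional Hausdorff measure of $J_\Phi$ vanishes and $\HD(J_\Phi) \leq t$; taking $t \searrow B_\Phi$ yields $\HD(J_\Phi) \leq B_\Phi$.

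For the lower bound, I would fix $t < B_\Phi$ and aim to construct a Borel probability measure $\mu$ on $J_\Phi$ satisfying $\mu(B(x,r)) \lesssim r^t$ for all $x \in J_\Phi$ and all sufficiently small $r$; the mass distribution principle then delivers $\HD(J_\Phi) \geq t$. Following the Mauldin--Urba\'nski scheme adapted to the non-autonomous setting (cf.\ \cite{rempe-gillen_non-autonomous_2016, atnip_nonautonomous_2017}), I would select a subsequence $n_k$ along which $Z_{n_k}(t) \geq e^{-\ep n_k}$ (available since $\ul{P}(t) \geq 0$) and define $\mu_{n_k}$ by placing mass $\norm{(\phi_\om)'}^t / Z_{n_k}(t)$ on an arbitrary point of each cylinder $\phi_\om(X_{n_k})$, $\om \in I^{n_k}$. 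Because $\Phi$ is finite and subexponentially bounded and $X_0$ is compact, one extracts a weak-$*$ accumulation point $\mu$ whose support lies in $J_\Phi$; Bounded Distortion together with the consistency inequality \eqref{eqn: Zn ineq} then yields the cylinder estimate $\mu(\phi_\om(X_n)) \lesssim \norm{(\phi_\om)'}^t$ at every generation $n$.

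The main obstacle is converting this cylinder estimate into the required ball estimate. Here the Uniform Cone Condition forces each $\phi_\om(X_n)$ to contain a ball of radius comparable to its own diameter; the Geometry Condition provides a conformal extension on a definite neighborhood $W_n$ of $X_n$, so that Koebe's Distortion Theorem controls shapes uniformly across generations; and the Open Set Condition caps the multiplicity with which cylinders at a common generation can meet a single ball. Together these produce a universal constant $N_0$ such that, for $x\in J_\Phi$ and small $r$, the ball $B(x,r)$ meets at most $N_0$ cylinders $\phi_\om(X_n)$ at the generation $n$ chosen so that $\norm{(\phi_\om)'}\diam(X_n) \comp r$, which promotes the cylinder bound to $\mu(B(x,r)) \lesssim r^t$. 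The Diameter Condition keeps this rescaling non-degenerate as $n\to\infty$, and letting $t \nearrow B_\Phi$ completes the proof.
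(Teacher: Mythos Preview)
The paper does not give its own proof of this theorem: immediately after the statement it simply refers the reader to \cite{rempe-gillen_non-autonomous_2016} for the stationary case and to \cite{atnip_nonautonomous_2017} for the non-stationary case. Your outline is precisely the strategy carried out in those references---cylinder covers together with the Diameter Condition for the upper bound, and a Frostman measure obtained as a weak-$*$ limit of normalized cylinder-mass distributions, upgraded from cylinder estimates to ball estimates via the Open Set, Uniform Cone, and Geometry Conditions, for the lower bound---so there is nothing further to compare beyond noting that you have correctly reconstructed the cited argument.
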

See \cite{rempe-gillen_non-autonomous_2016} for a proof in the stationary setting, and \cite{atnip_nonautonomous_2017} for a proof in the non-stationary setting. 

\section{Radial Set for Affine Perturbations}\label{sec: Mayer}
Throughout this section we consider a general meromorphic function $f_0$ of finite order $\rho$ like those considered by Mayer in \cite{mayer_size_2007}. We suppose that the following hold. 
\begin{enumerate}
	\item[\mylabel{(M1)}{(M1)}] There exists a pole $b$ of $f_0$ such that $b\not\in\ol{\Sing(f_0^{-1})}$. Let $b$ be such a pole and let $q=m(b)$ be the multiplicity of $b$.
	\item[\mylabel{(M2)}{(M2)}] There are constants $s_0>0$, $Q>0$ and $\al>-1-1/q$ such that 
	\begin{align}\label{eqn: M derivative condition}
	\absval{f_0'(z)}\leq Q\absval{z}^{\al} \quad \text{for} \quad z\in f_0^{-1}(U_0),\; \absval{z}\to\infty,
	\end{align} 
	where $U_0=B(b,s_0)$. 
\end{enumerate}

The proof of the main theorem of this section will rely on our ability to construct a finite, stationary NCIFS, whose limit set is contained within the non-autonomous Julia set $\jl(F_A)$, for which we can find a suitable lower bound for its Hausdorff dimension. 

\begin{theorem}\label{thm: Mayer functions}
	Let $f_0$ be a meromorphic function of finite order $\rho$ satisfying conditions \ref{(M1)} and \ref{(M2)} above with pole $b$ and neighborhood $U_0=B(b,s_0)$. Then, there exist $\ep,\dl>0$ such that if $\seq{\lm_n}$ and $\seq{c_n}$ are sequences in $\CC$ such that 
	\begin{align*}
		\lm_n,\lm_n^{-1}\in B(1,\dl)\spand \absval{c_n}<\ep
	\end{align*}
	for each $n\in\NN$, then 
	$$
		\HD(\jl_{r}(F_A))\geq \frac{\rho}{\al+1+1/q}.
	$$
\end{theorem}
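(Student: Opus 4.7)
The plan is to build, for each $t<\rho/(\al+1+1/q)$, a stationary finite NCIFS $\Phi$ whose maps are inverse branches of the two-step compositions $\hat{f}_{2n}\circ\hat{f}_{2n-1}$ near the pole $b$, whose limit set lies inside $\jl_r(F_A)$, and whose Bowen dimension is at least $t$. Theorem~\ref{BF for NCIFS} will then give $\HD(\jl_r(F_A))\geq t$, and letting $t\nearrow\rho/(\al+1+1/q)$ finishes the proof. A single pair $(\dl,\ep)$ is enough, because for each $t$ we need only finitely many inverse branches, and the perturbative estimates are uniform in the chosen index set.

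\emph{Autonomous model.} Choose $r_0<s_0$ so that $V:=B(b,r_0)$ is disjoint from $\ol{\Sing(f_0^{-1})}$. Since $b$ is a pole of multiplicity $q$, the map $f_0$ is a $q$-to-one branched cover from $V\setminus\{b\}$ onto a neighborhood of $\infty$, giving $q$ local inverse branches $\psi_1,\dots,\psi_q$ of $f_0$, each defined on a simply connected subset of $\{|w|>R\}$ and mapping into $V$. For every $b$-point $z_m$ of $f_0$ with $|z_m|>R$, hypothesis \ref{(M1)} provides a single-valued inverse branch $\phi_m\colon V\to V_{z_m}$ of $f_0$ with $\phi_m(b)=z_m$; fix an index $j(m)$ so that $V_{z_m}$ lies in the domain of $\psi_{j(m)}$ and set $g_m:=\psi_{j(m)}\circ\phi_m\colon V\to V$, an inverse branch of $f_0^2$. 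Hypothesis \ref{(M2)} yields $|\phi_m'(b)|=|f_0'(z_m)|^{-1}\gtrsim|z_m|^{-\al}$, while the pole expansion $f_0(w)\sim C(w-b)^{-q}$ gives $|\psi_{j(m)}'(z_m)|\asymp|z_m|^{-(q+1)/q}$; combined with the Koebe Distortion Theorem one obtains
\begin{align*}
	\norm{g_m'}\asymp|z_m|^{-(\al+1+1/q)},
\end{align*}
and Borel's theorem~\eqref{Borel thm} then gives $\sum_m\norm{g_m'}^t\asymp\Sg(t(\al+1+1/q),b)=\infty$ for every $t<\rho/(\al+1+1/q)$.

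\emph{Perturbation and conclusion.} The pole $b$ and its multiplicity $q$ are unchanged by the affine perturbation $\hat{f}_n(z)=\lm_nf_0(z)+c_n$, and preimages under $\hat{f}_n$ of a point near $b$ are $((b-c_n)/\lm_n)$-points of $f_0$, perturbing continuously from the $z_m$ as $\dl,\ep\to 0$. Standard implicit-function arguments then produce, for $\dl,\ep$ small enough, perturbed inverse branches $\tilde\psi_j^{(n)}$ and $\tilde\phi_m^{(n)}$ of $\hat{f}_n$ whose derivative norms on $V$ are comparable, uniformly in $n$ and in any fixed finite index set, to those of $\psi_j$ and $\phi_m$. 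Setting $X_n:=V$ and $\phi_m^{(n)}:=\tilde\psi_{j(m)}^{(2n-1)}\circ\tilde\phi_m^{(2n)}$ gives a stationary NCIFS for the doubled system with $\norm{(\phi_m^{(n)})'}\asymp|z_m|^{-(\al+1+1/q)}$ up to a factor $(1+O(\dl+\ep))$. The open-set condition follows from the disjointness of the $V_{z_m}$'s and of the images of distinct sectorial branches $\psi_j$, bounded distortion and uniform contraction come from Koebe and the largeness of $|z_m|$, and the geometric, cone, and diameter axioms are automatic because $V$ is a fixed convex disk. A sufficiently large finite index set makes $\ul{P}(t)>0$ for any given $t<\rho/(\al+1+1/q)$, so Theorem~\ref{BF for NCIFS} yields $\HD(J_\Phi)\geq t$; uniform bounded-distortion pullbacks of the fixed disk $V$ place $J_\Phi$ inside $\jl_r(F_A)$, while Lemma~\ref{lem: deriv to infty implies Julia} confirms $J_\Phi\sub\jl(F_A)$. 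The main technical obstacle is the simultaneous perturbative control of the two distinct inverse-branch families (the $q$-fold local inverses at $b$ and the univalent branches at the distant preimages) with constants uniform in $n$ and in the chosen finite index set; because only the leading behavior of $\hat{f}_n$ near $b$ enters and only finitely many $z_m$ are ever used, the required smallness of $\dl,\ep$ can be pinned down once a generous sectorial partition of $\{|w|>R\}$ is chosen.
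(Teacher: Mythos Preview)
Your approach is essentially the paper's: build a finite stationary NCIFS on a disk around $b$ whose generators are inverse branches of the two-step maps $\hat f_{2n}\circ\hat f_{2n-1}$ (one branch carrying a point near $b$ out to a distant $b$-point $z_m$, the second carrying it back near $b$), use Borel's theorem together with the derivative estimates $|\phi_m'|\asymp|z_m|^{-\al}$ and $|\psi_j'|\asymp|z_m|^{-1-1/q}$ to force $Z_{(n)}(t)$ large for each $t<\rho/(\al+1+1/q)$, and apply Theorem~\ref{BF for NCIFS}. The placement of $J_\Phi$ inside $\jl_r(F_A)$ via Lemma~\ref{lem: deriv to infty implies Julia} and the fixed disk of univalence is exactly what the paper does.

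There is one point where your justification is shaky. The theorem requires a \emph{single} pair $(\ep,\dl)$ valid for all $t<\rho/(\al+1+1/q)$. Your stated reason, ``only finitely many $z_m$ are ever used,'' points the wrong way: as $t\nearrow\rho/(\al+1+1/q)$ the number $N_t$ of branches needed tends to infinity, so an argument that fixes $\ep,\dl$ only \emph{after} choosing a finite index set yields $\ep_t,\dl_t$ depending on $t$ (this is precisely the weaker form appearing in Theorem~\ref{thm: UB KU hyp dim affine}, and the paper explicitly contrasts the two situations). The paper closes this by writing the perturbed branch explicitly as $\phi_m^{(n)}(w)=\phi_m^{(0)}\bigl((w-c_n)/\lm_n\bigr)$ and verifying the $m$-independent containment $(U_1-c_n)/\lm_n\subset U_0$ for a fixed smaller disk $U_1\subset U_0$; this makes the branch constructions and derivative comparisons uniform over \emph{all} $m\geq M_1$ at once, so $\ep,\dl$ can be chosen before $t$. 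Your ``standard implicit-function arguments'' should be replaced by (or supplemented with) this explicit observation.
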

\begin{proof}
	
	Suppose $b$ is a pole of $f_0$ with multiplicity $q$.  Note that near the pole $b$, $f_0$ is of the form 
	$$
		f_0(z)=\frac{g_0(z)}{(z-b)^q}
	$$
	where $g_0$ is a function which is analytic on a neighborhood of $b$ with $g_0(b)\neq 0$. Without loss of generality we may assume that $s_0$ is sufficiently small such that the following hold:
	\begin{enumerate}
		\item[(i)] No singular values of $f_0$ belongs to $U^*:=B(b,2s_0)$, i.e. $\Sing(f_0^{-1})\cap U^*=\emptyset$,
		\item[(ii)] For each $w\in U^*\bs\set{b}$ we have 
		\begin{align}\label{other deriv cond}
		\absval{f_0'(w)} \comp \frac{1}{\absval{w-b}^{q+1}}\comp\absval{f_0(w)}^{1+1/q}.
		\end{align}
	\end{enumerate} 
	Now let 
	\begin{align*}
	V:=f_0(U_0\bs\set{b}),
	\end{align*} 
	which is a nonempty punctured neighborhood of $\infty$. Choose $R_0>0$ sufficiently large such that $B_{R_0}\sub V$. Recall that the $b$-points for $f_0$ is the collection of points 
	\begin{align*}
		\set{z_m^{(0)}:m\in\NN}=f_0^{-1}(b).	
	\end{align*}
	Then, $z_m^{(0)}\to\infty$ as $m\to\infty$. For each $z_m^{(0)}\in f_0^{-1}(b)\cap V$ let $\phi_m^{(0)}:U_0\to\CC$ denote the holomorphic inverse branch of $f_0$ such that $\phi_m^{(0)}(b)=z_m^{(0)}$. In Claim 3.1 of \cite{mayer_size_2007}, Mayer shows that there exists $M_0\in\NN$ such that for all $m\geq M_0$, 
	\begin{align*}
		\ol{\phi_m^{(0)}(U_0)}\sub B_{R_0}\sub V.
	\end{align*}
	As $f_0:U_0\to V$ is an unbranched covering of $V$, for any $\Om\sub B_{R_0}\bs\set{\infty}$ open and simply connected, we define $\psi_\Om$ to be an inverse branch of $f_0:U_0\bs\set{b}\to V$ defined on $\Om$ such that 
	\begin{align*}
		\psi_\Om(\Om)\sub U_0\bs\set{b}. 
	\end{align*} 
	In particular, for each $m\geq M_0$, we define $\psi_m^{(0)}$ to be an inverse branch of $f_0$ defined on $\phi_m^{(0)}(U_0)$. Mayer then shows that the infinite autonomous iterated function system given by 
	\begin{align*}
	\Phi_0=\set{\gm_m^{(0)}:\ol{U}_0\to \ol{U}_0 : m\geq M_0},
	\end{align*}
	where $\gm_m^{(0)}:=\psi_m^{(0)}\circ\phi_m^{(0)}$, is such that the limit set $J_{\Phi_0}$ of $\Phi_0$ is contained within the Julia set $\jl(f_0)$. Mayer is then able to estimate that 
	\begin{align*}
	\HD(J_{\Phi_0})\geq\frac{\rho}{\al+1+1/q}
	\end{align*} 
	by showing that 
	\begin{align*}
	\sum_{m\geq M_0}\absval{(\gm_m^{(0)})'(b)}\geq\sum_{m\geq M_0}\absval{z_m^{(0)}}^{-t(\al+1+1/q)}.
	\end{align*} 
	In view of \eqref{Borel thm}, we see that $\frac{\rho}{\al+1+1/q}$ is the critical exponent of the full series 
	\begin{align*}
	\sum_{m\geq 1}\absval{z_m^{(0)}}^{-t(\al+1+1/q)}.
	\end{align*}
	Thus, he obtains the result by applying Theorem 3.15 of \cite{mauldin_dimensions_1996}, which says that the Hausdorff dimension of the limit set of an infinite (autonomous) conformal IFS $S=\set{\gm_j:X\to X}_{j\in\NN}$ on a set $X$ is at least as large as 
	\begin{align}\label{theta}
	\theta:=\inf\set{t>0:\sum_{j\in\NN}\absval{\gm_j'(x)}^t<\infty, x\in X}.
	\end{align}  
	Now we build off Mayer's construction to prove our result. We begin by letting 
	\begin{align}\label{eqn:affine M r choice}
	0<s_1<\frac{s_0}{16K^2},
	\end{align} 
	where $K$ comes from Koebe's Distortion Theorem. Let $U_1=B(b,s_1)$, and let $R_1\geq R_0$ such that 
	\begin{align}\label{Psi_Om contained in U_1}
		\psi_\Om(\Om)\sub U_1
	\end{align}
	for all $\Om\sub B_{R_1}$ open and simply connected. Let $R_2\geq 3R_1$, and we choose $\ep, \dl>0$ such that the following hold:
	\begin{itemize}
		\item $\dl<\min\set{\frac{s_0}{8\absval{b}},\frac{s_0}{8},\frac{1}{2}}$,	
		\item $(1+\dl)\left[r+\dl(1+\absval{b})\right]<s_0$,
		\item $\ep<\min\set{\frac{s_1}{2},\dl, R_1}$.
	\end{itemize}
	To see that such a $\dl>0$ does in fact exist, we note that $(1+\dl)\left[s_1+\dl(1+\absval{b})\right]<s_0$ if
	\begin{align*}
		0<\dl<\frac{-1+\sqrt{1+4\frac{s_0-s_1}{1+s_1+\absval{b}}}}{2}.
	\end{align*} 	
	Since $z_m^{(0)}\to\infty$, we take $M_1\geq M_0$ sufficiently large such that 
	\begin{align}\label{eqn:affine M choice of M1}
		\phi_m^{(0)}(U_0)\sub B_{R_2}
	\end{align} 
	for all $m\geq M_1$.	
	Then, by our choice of $\ep, \dl$ we have that 
	\begin{align*}
		\frac{R_2-\ep}{1+\dl}\geq R_1,
	\end{align*}	
	since 
	\begin{align*}
		(1+\dl)R_1+\ep \leq2R_1+R_1=3R_1\leq R_2.
	\end{align*}
	For each $n\in\NN$, we suppose that $\absval{c_n}<\ep$ and that both $\lm_n,\lm_n^{-1}\in B(1,\dl)$. Furthermore, for each $w\in \CC$, define 
	\begin{align*}
		f_n(w)=\lm_nf_0(w)+c_n.
	\end{align*}
	We now claim that for each $n\in \NN$ we have that 
	\begin{align}\label{perturbed U_1 in U_0}
		b\in \frac{U_1-c_n}{\lm_n}\sub U_0,
	\end{align}
	where 
	$$
		\frac{U_1-c_n}{\lm_n}:=\set{\frac{w-c_n}{\lm_n}:w\in U_1}.
	$$ 
	To see this we simply calculate 
	\begin{align*}
		\absval{\frac{w-c_n}{\lm_n}-b}&\leq \absval{\frac{w-c_n}{\lm_n}-\frac{b-c_n}{\lm_n}}+\absval{\frac{b-c_n}{\lm_n}-b}\\
		&=\absval{\lm_n^{-1}}\absval{w-b}+\absval{\lm_n^{-1}}\absval{b-c_n-\lm_nb}\\
		&\leq(1+\dl)\left[s_1+\absval{c_n}+\absval{1-\lm_n}\absval{b}\right]\\
		&\leq (1+\dl)\left[s_1+\dl(1+\absval{b})\right]<s_0.
	\end{align*}
	For each $n\in\NN$ and $m\geq M_1$ we define the inverse branch $\phi_m^{(n)}$ of $f_n$ on $U_1$ by
	\begin{align*}
		\phi_m^{(n)}(w)=\phi_m^{(0)}\left(\frac{w-c_n}{\lm_n}\right),
	\end{align*}
	and hence, by \eqref{eqn:affine M choice of M1} and \eqref{perturbed U_1 in U_0}, we have that 
	\begin{align}\label{phi_m^n(U_1) sub BR2}
		\phi_m^{(n)}(U_1)\sub\phi_m^{(0)}(U_0)\sub B_{R_2}
	\end{align}
	for all $m\geq M_1$ and $n\in\NN$. We also note that condition \eqref{eqn: M derivative condition} above ensures that 
	\begin{align*}
		\absval{(\phi_m^{(n)})'(w)}\geq K^{-1}\absval{\phi_m^{(n)}(w)}^{-\al},
	\end{align*} 
	for each $w\in U_1$. 	
	
	Now, given \eqref{Psi_Om contained in U_1} and \eqref{eqn:affine M choice of M1} we have that 
	\begin{align}\label{Om_m^n in B_R_1}
		\Om_m^{(n)}:=\frac{\phi_m^{(0)}(U_0)-c_{n}}{\lm_{n}}\sub B_{R_1}
	\end{align}
	for all $n\in\NN$ and all $m\geq M_1$. Define 
	\begin{align*}
		\psi_m^{(n)}:=\psi_{\Om_m^{(n)}}:\Om_m^{(n)}\longrightarrow U_0
	\end{align*}
	to be an inverse branch of $f_0:U_0\bs\set{b}\to V$ on $\Om_m^{(n)}$. 
	Equivalently, we see that $\psi_m^{(n)}$ is an inverse branch of $f_n:U_1\bs\set{b}\to V$ defined on $\phi_m^{(0)}(U_0)$. 
	In light of \eqref{Psi_Om contained in U_1}, \eqref{eqn:affine M choice of M1}, and \eqref{Om_m^n in B_R_1} we see that 
	\begin{align}\label{psi_m^n of Om in U_1}
		\psi_m^{(n)}(\Om_m^{(n)})\sub U_1\sub U_0
	\end{align}
	for all $n\in\NN$ and all $m\geq M_1$.   
	In view of \eqref{Om_m^n in B_R_1} and \eqref{psi_m^n of Om in U_1} we see that
	\begin{align*}
		\psi_m^{(n-1)}(\phi_m^{(n)}(U_1))\sub U_1
	\end{align*}
	for all $n\in\NN$ and all $m\geq M_1$.
	Now, define   
	\begin{align*}
		\gm_m^{(n)}:=\psi_m^{(2n-1)}\circ\phi_m^{(2n)}:\ol{U}_1\to \ol{U}_1	
	\end{align*}
	for all $n\in\NN$ and $m\geq M_1$.

	Fix $t<\frac{\rho}{\al+1+1/q}\leq2$. Then, 
	\begin{align}\label{Mayer sum diverge}
		\sum_{m\geq M_1}\absval{z_m^{(0)}}^{-t(\al+1+1/q)}=\infty,
	\end{align}
	and thus there is some $N_t\in\NN$, depending on $t$, such that 
	\begin{align}\label{choice of M}
		\sum_{m=M_1}^{M_1+N_t}\absval{z_m^{(0)}}^{-t(\al+1+1/q)}\geq 2^{1+2(4+2/q)}K^2Q^2L                                    
	\end{align} 
	where $L\geq 1$ is the comparability constant coming from \eqref{other deriv cond}.

	Letting $I^{(n)}=\set{m\in\NN:M_1\leq m\leq M_1+N_t}$, for each $n\in\NN$ we set 
	\begin{align*}
		\Phi^{(n)}:=\set{\gm_m^{(n)}:\ol{U}_1\to \ol{U}_1	: m\in I^{(n)}}, 
	\end{align*}
	and let 
	\begin{align*}
		\Phi:=\seq{\Phi^{(n)}}.
	\end{align*}
	Note that while the alphabets $I^{(n)}$ do not depend upon $n$, the collection $\Phi^{(n)}$ does depend on $n$.
	Since the images of the inverse branches are disjoint, the open set condition is satisfied, and as $\#I^{(n)}=N_t+1$ for each $n\in\NN$, we have that $\Phi$ is a uniformly finite, stationary NCIFS. Thus, Bowen's formula holds, i.e.
	\begin{align*}
		\HD(J_\Phi)=B_\Phi.
	\end{align*}  
	Now, to see that $J_\Phi\sub\jl(F_A)$ as desired, suppose $z\in J_\Phi$. Since $\absval{\gm_\om'(z)}\to 0$ as $\absval{\om}=n\to\infty$, where $\absval{\om}$ denotes the length of the word $\om$, then we see $\absval{(F^{2n}_A)'(z)}\to\infty$ as $n\to\infty$. Thus, applying Lemma \ref{lem: deriv to infty implies Julia}, we see that the limit set $J_\Phi$ is contained in the Julia set $\jl(F_A)$. Furthermore, by construction, we have that $J_\Phi\sub\jl_{r}(F_A)$.

	Now we estimate a lower bound of the Hausdorff dimension of the limit set of $\Phi$ analogous to $\theta$ given in \eqref{theta} for the autonomous setting (comp. \cite{rempe-gillen_non-autonomous_2016}, \cite{atnip_nonautonomous_2017}).
	
	Let $w_m^{(n)}=\lm_{2n}b+c_{2n}$ for each $n\in\NN$ and $M_1\leq m\leq M_1+N_t$. Notice that 
	\begin{align*}
		\phi_m^{(2n)}(w_m^{(n)})=z_m^{(0)}.
	\end{align*}
	Then, 
	\begin{align*}
		Z_{(n)}(t)&\geq \sum_{m=M_1}^{M_1+N_t}\absval{(\gm_m^{(n)})'(w_m^{(n)})}^t\\
		&=\sum_{m=M_1}^{M_1+N_t}\absval{(\psi_m^{(2n-1)})'(z_m^{(0)})}^{t}\absval{(\phi_m^{(2n)})'(w_m^{(n)})}^{t}\\
		&=\sum_{m=M_1}^{M_1+N_t}\absval{f_{2n-1}'(\psi_m^{(2n-1)}(z_m^{(0)}))}^{-t}\absval{f_{2n}'(z_m^{(0)})}^{-t}\\
		&=\sum_{m=M_1}^{M_1+N_t}\absval{\lm_{2n-1}}^{-t}\absval{f_{0}'(\psi_m^{(2n-1)}(z_m^{(0)}))}^{-t}\absval{\lm_{2n}}^{-t}\absval{f_{0}'(z_m^{(0)})}^{-t}\\
		&\geq (1-\dl)^{2t}\sum_{m=M_1}^{M_1+N_t}\absval{f_{0}'(\psi_m^{(2n-1)}(z_m^{(0)}))}^{-t}\absval{f_{0}'(z_m^{(0)})}^{-t}.
	\end{align*}
	As $b\neq\psi_m^{(2n-1)}(z_m^{(0)})\in U_1$ and $w_m^{(n)}\in U_1$, applying \eqref{eqn: M derivative condition}, \eqref{other deriv cond}, and \eqref{choice of M} we see 
	\begin{align*}
		Z_{(n)}(t)&\geq Q^{-t}(1-\dl)^{2t}\sum_{m=M_1}^{M_1+N_t}\absval{f_{0}'(\psi_m^{(2n-1)}(z_m^{(0)}))}^{-t}\absval{z_m^{(0)}}^{-t\al}\\
		&\geq Q^{-t}L^{-1}(1-\dl)^{2t}\sum_{m=M_1}^{M_1+N_t}\absval{f_{0}(\psi_m^{(2n-1)}(z_m^{(0)}))}^{-t(1+1/q)}\absval{z_m^{(0)}}^{-t\al}\\	
		&= Q^{-t}L^{-1}(1-\dl)^{2t}\sum_{m=M_1}^{M_1+N_t}\absval{f_{0}\left(\psi_m^{(0)}\left(\frac{z_m^{(0)}-c_{2n-1}}{\lm_{2n-1}}\right)\right)}^{-t(1+1/q)}\absval{z_m^{(0)}}^{-t\al}\\	
		&= Q^{-t}L^{-1}(1-\dl)^{2t}\absval{\lm_{2n-1}}^{t(1+1/q)}\sum_{m=M_1}^{M_1+N_t}\absval{z_m^{(0)}-c_{2n-1}}^{-t(1+1/q)}\absval{z_m^{(0)}}^{-t\al}\\	
		&\geq Q^{-t}L^{-1}(1-\dl)^{t(3+1/q)}\sum_{m=M_1}^{M_1+N_t}\absval{z_m^{(0)}-c_{2n-1}}^{-t(1+1/q)}\absval{z_m^{(0)}}^{-t\al}.
	\end{align*}
	Given that $t<2$, $\dl<\frac{1}{2}$, and that $\absval{z_m^{(0)}-c_{2n-1}}\leq2\absval{z_m^{(0)}}$ for all $n\in\NN$ and $M_1\leq m\leq M_1+N_t$ (if this were not the case we could otherwise take $M_1$ sufficiently large), we have
	\begin{align*}
	Z_{(n)}(t)&\geq Q^{-t}L^{-1}2^{-t(1+1/q)}(1-\dl)^{t(3+1/q)}\sum_{m=M_1}^{M_1+N_t}\absval{z_m^{(0)}}^{-t(1+1/q)}\absval{z_m^{(0)}}^{-t\al}\\
	&\geq Q^{-t}L^{-1}2^{-t(4+2/q)}\sum_{m=M_1}^{M_1+N_t}\absval{z_m^{(0)}}^{-t(\al+1+1/q)}\\
	&\geq Q^{-2}L^{-1}2^{-2(4+2/q)}\sum_{m=M_1}^{M_1+N_t}\absval{z_m^{(0)}}^{-t(\al+1+1/q)}\geq 2K^2.
	\end{align*}	
	Thus, in light of \eqref{eqn: Zn ineq}, we see that
	\begin{align*}
		Z_n(t)\geq 2^n,
	\end{align*}
	which in turn implies that $\ul{P}(t)>0$, and hence $\HD(J_\Phi)\geq t$. As this holds for each $t<\frac{\rho}{\al+1+1/q}$, we reach the conclusion that 
	\begin{align*}
		\HD(\cJ_r(F_A))\geq\frac{\rho}{\al+1+1/q},
	\end{align*} 
	which finishes the proof. 
\end{proof}
The following result of Mayer \cite{mayer_size_2007} follows in part from our Theorem \ref{thm: Mayer functions}. 
\begin{corollary}
	Let $f$ be a meromorphic function of finite order $\rho$ which satisfies the hypotheses of the previous theorem, including the constants $\al$ and $q$. Then, 
	\begin{align*}
		\HD(\cJ_r(f))\geq \frac{\rho}{\al+1+1/q}.
	\end{align*}
	If, in addition, $f$ is of divergence type, then the inequality becomes strict. 
\end{corollary}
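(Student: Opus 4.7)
The plan is to obtain the first inequality as an immediate specialization of Theorem~\ref{thm: Mayer functions} and to obtain the strict inequality under the divergence type assumption by rerunning the proof of that theorem with the trivial perturbation, this time at the critical exponent itself, where the divergence hypothesis now supplies the required unboundedness of the partition function.

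For the first inequality I would invoke Theorem~\ref{thm: Mayer functions} with the trivial sequences $\lm_n \equiv 1$ and $c_n \equiv 0$. These satisfy $\lm_n, \lm_n^{-1} \in B(1,\dl)$ and $\absval{c_n} < \ep$ for any $\dl,\ep > 0$, so the hypotheses are met; under this specialization $\hat f_n = f$ for every $n$, hence $F_A^n = f^n$ and $\cJ_r(F_A) = \cJ_r(f)$, and the conclusion becomes $\HD(\cJ_r(f)) \geq \rho/(\al + 1 + 1/q)$.

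For the strict inequality under divergence type, set $t_0 := \rho/(\al + 1 + 1/q)$, so that $t_0(\al+1+1/q) = \rho$. The divergence type assumption then gives
\begin{align*}
\sum_{m \geq M_1} \absval{z_m^{(0)}}^{-t_0(\al+1+1/q)} = \sum_{m \geq M_1} \absval{z_m^{(0)}}^{-\rho} = \infty,
\end{align*}
so the divergence \eqref{Mayer sum diverge} used in the proof of Theorem~\ref{thm: Mayer functions} actually holds at the critical value $t = t_0$ itself, not merely for $t < t_0$. I can therefore choose $N_{t_0}$ so that \eqref{choice of M} holds at $t = t_0$, and with $\lm_n \equiv 1$ and $c_n \equiv 0$ every factor $\absval{\lm_n}^{\pm t}$ and $(1-\dl)^{t}$ appearing in the derivation of $Z_{(n)}(t)$ collapses to $1$, while $\absval{z_m^{(0)} - c_{2n-1}} = \absval{z_m^{(0)}}$, so the chain of estimates in that proof carries through verbatim at $t = t_0$ and yields $Z_{(n)}(t_0) \geq 2K^2$. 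Combining this with \eqref{eqn: Zn ineq} gives $Z_n(t_0) \geq 2^n$, hence $\ul{P}(t_0) \geq \log 2 > 0$, so the Bowen dimension satisfies $B_\Phi > t_0$. Since $\Phi$ reduces to a finite, stationary IFS in this setting, Theorem~\ref{BF for NCIFS} yields $\HD(J_\Phi) = B_\Phi > t_0$, and the containment $J_\Phi \sub \cJ_r(f)$ then upgrades the original inequality to a strict one.

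The only mild obstacle is confirming that the critical case $t = t_0$ does not degenerate the estimate, but the single step in the proof of Theorem~\ref{thm: Mayer functions} that genuinely required the strict inequality $t < t_0$ was the divergence of the series \eqref{Mayer sum diverge}, which is now furnished at $t_0$ itself by the divergence type hypothesis; every remaining step is a routine rerun of the earlier argument with the trivial perturbation, after which the strict positivity of $\ul{P}(t_0)$ is automatic.
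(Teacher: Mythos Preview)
Your derivation of the non-strict inequality by specializing Theorem~\ref{thm: Mayer functions} to the trivial sequences $\lm_n\equiv 1$, $c_n\equiv 0$ is exactly what the paper intends when it says the corollary ``follows in part'' from that theorem.

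For the strict inequality under the divergence-type hypothesis your route differs from the one the paper relies on. The paper does not supply its own argument here; it attributes the statement to Mayer, whose proof (also invoked later in Section~\ref{sec ED}) passes through the \emph{infinite} autonomous IFS $\Phi_0$ and the hereditary-regularity machinery of Mauldin--Urba\'nski (Theorem~3.20 of \cite{mauldin_dimensions_1996}) to obtain $\HD(J_{\Phi_0})>\theta$. You instead rerun the \emph{finite} construction of Theorem~\ref{thm: Mayer functions} at the critical exponent $t_0$ itself, using divergence type only to guarantee a finite $N_{t_0}$ in \eqref{choice of M}. This is more elementary and stays entirely inside the paper's NCIFS framework, avoiding the appeal to infinite-IFS theory altogether.

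There is, however, one step you should make explicit. From $\ul{P}(t_0)\geq\log 2>0$ you assert $B_\Phi>t_0$, but the definition $B_\Phi=\sup\{t:\ul{P}(t)\geq 0\}$ in Section~\ref{Sec: NCIFS} only yields $B_\Phi\geq t_0$ directly; strictness needs some $t_1>t_0$ with $\ul{P}(t_1)\geq 0$. With trivial perturbations the system is a genuine finite autonomous IFS, so the pressure $P(t)=\lim_n\frac{1}{n}\log Z_n(t)$ exists by Fekete, is convex (as a pointwise limit of log-sum-exp functions) and hence continuous, and is strictly decreasing since every contraction ratio is below $1$; thus $P(t_0)>0$ forces the zero of $P$ to lie strictly to the right of $t_0$. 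Alternatively, you could simply choose $N_{t_0}$ so that \eqref{choice of M} holds with an extra factor of $2$, leaving enough room to rerun the $Z_{(n)}$-estimate at a parameter slightly larger than $t_0$. Either addition closes the gap and your argument then stands.
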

\begin{remark}
	We are unable to prove a corresponding statement concerning functions of divergence type as the theory of non-autonomous iterated function systems is not as developed as the theory of infinite iterated function systems with respect to the number $\ta$, compare \cite{atnip_nonautonomous_2017} and \cite{mauldin_graph_2003}.
\end{remark}
\section{Escaping Set for Additive Perturbations}\label{sec: KU}
We now examine the class of functions investigated by Kotus and Urba\'nski in \cite{kotus_hausdorff_2008}.
Let $\cP=\cP(f)=f^{-1}(\infty)$ denote the set of all poles of the function $f$. Let $m$ be the function on the set of poles $\cP$ which assigns to each pole $a$ its multiplicity $m(a)$. 
In this section we will consider a transcendental meromorphic function, $f_0:\CC\to\hat{\CC}$, of finite order $\rho>0$ such that the following hold.
\begin{enumerate}
	\item[\mylabel{(KU1)}{(KU1)}] $\infty$ is not an asymptotic value of $f_0$.
	\item[\mylabel{(KU2)}{(KU2)}] There exists a co-finite subset $\cP^*\sub\cP$, which means precisely that $\cP\bs\cP^*$ is finite, and there exists $R^*>0$ such that 
	$$
		\dist{\Sing(f^{-1})}{a}>2R^*
	$$
	for all $a\in\cP^*$.
	\item[\mylabel{(KU3)}{(KU3)}] There exists $R^\dagger>0$ such that for distinct poles $a_1,a_2\in\cP$ we have 
	$$
	B(a_1,R^\dagger)\cap B(a_2,R^\dagger)=\emptyset.
	$$
	\item[\mylabel{(KU4)}{(KU4)}] There exist $M\in\NN$ and $\bt\geq 0$ such that for each $a\in\cP^*$
	\begin{align*}
		\absval{f_0(z)}\comp\frac{\absval{a}^{-\bt}}{\absval{z-a}^{m(a)}} \quad and \quad 
		\absval{f_0'(z)}\comp\frac{m(a)\absval{a}^{-\bt}}{\absval{z-a}^{m(a)+1}}
	\end{align*}
	for $z\in B(a,R^\dagger)$, where $m(a)\in\NN$ with $1\leq m(a)\leq M$.
\end{enumerate}
Note that Lemma \ref{iversen lemma} implies that $f_0$ has infinitely many poles. As $m:\cP\to\NN$ takes on only finitely many values, there is $M\in\NN$ such that $m(a)\leq M$ for each $a\in\cP$ and there is a largest integer $M^*\leq M$ such that the sum 
\begin{align*}
\sum_{a\in m^{-1}(M^*)}(1+\absval{a})^{-t}
\end{align*}
is finite for $t>\rho$ and infinite for $t<\rho$.

\begin{theorem}\label{UB of dimension of escaping set}
	If $f_0$ satisfies the above conditions \ref{(KU1)}-\ref{(KU4)}, then there is $\ep>0$ such that if $\seq{c_n}$ is a sequence in $\CC$ with $\absval{c_n}<\ep$ for all $n\in\NN$, then 
	\begin{align*}
		\HD(I_\infty(F_+))\leq\frac{\rho M}{\bt+M+1}.
	\end{align*}
\end{theorem}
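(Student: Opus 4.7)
The plan is to establish the upper bound by a geometric covering of $I_\infty(F_+)$ through components of iterated preimages of the neighborhood $B_R$ of infinity. Since
\[
I_\infty(F_+)\subset \bigcup_{N\in\NN}\bigl\{z: |F_+^n(z)|>R \text{ for all } n\geq N\bigr\}
\]
for any $R>0$, and $F_+^N$ is meromorphic and hence locally Lipschitz off its pole set (so preimages preserve Hausdorff dimension), countable stability reduces the theorem to showing $\HD(E_R)\leq t$ for every $t>\rho M/(\beta+M+1)$ and $R=R(t)$ large, where $E_R:=\bigcap_{k\geq 0}(F_{N,+}^k)^{-1}(B_R)$ is defined for the shifted iteration $F_{N,+}^k=f_{N+k}\circ\cdots\circ f_{N+1}$. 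Hypothesis~\ref{(KU1)}, Iversen's Theorem, and the separation conditions \ref{(KU2)}--\ref{(KU3)} together guarantee that, for $R$ sufficiently large (depending on $\ep$), each component of $f_n^{-1}(B_R)$ is a topological disk $V_a\subset B(a, R^\dagger/2)$ around a unique pole $a\in\cP(f_0)$, and $f_n|_{V_a}\to B_R$ is a proper $m(a)$-to-one covering. Since $E_R\subset \bigcup_{|a|>R} V_a$, it suffices to bound $\HD(E_R\cap V_a)\leq t$ for each such pole $a$.

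To cover $E_R\cap V_a$, I inductively describe the components $V^{(k)}$ of $(F_{N,+}^k)^{-1}(B_R)$ inside $V_a$ by itineraries $(a_1,\dots,a_k)$ of poles together with branch choices for each $f_{N+j}|_{V^{(j-1)}}$. Because the distance $R^*$ to the singular set in \ref{(KU2)} ensures each inverse branch of $f_{N+j}$ is univalent on a definite annular neighborhood, Koebe's Distortion Theorem together with \ref{(KU4)} yields the single-step diameter estimate
\[
\diam(V^{(k+1)})\comp \frac{|a_k|^{-\beta/m(a_k)}}{m(a_k)\, R^{1/m(a_{k+1})}}\,|a_{k+1}|^{-\beta/m(a_{k+1})-(m(a_k)+1)/m(a_k)},
\]
obtained by evaluating the derivative asymptotic at an $a_{k+1}$-point of $f_{N+k+1}$ inside $V^{(k)}$.

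The core step is a uniform single-step $t$-sum contraction. Summing $\diam(V^{(k+1)})^t$ over $a_{k+1}\in\cP$ (with $|a_{k+1}|>R$, required for nonemptiness of the preimage) and over the $m(a_k)$ local branches, splitting the $a_{k+1}$-sum by multiplicity $m'\leq M$, and applying the Borel-type tail bound $\sum_{|a|>R,\,m(a)=m'}|a|^{-s}\lesssim R^{\rho-s}$ for $s>\rho$ (a consequence of \eqref{Borel thm}), the $|a_k|$-dependence cancels against $\diam(V^{(k)})^t$, and the net $R$-exponent reduces to $\rho-t(m'+\beta+1)/m'$. This quantity is negative for every $m'\leq M$ precisely when $t>\rho M/(\beta+M+1)$, since $(m'+\beta+1)/m'\geq (M+\beta+1)/M$. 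Hence the single-step ratio $T_{m(a_k)}(R)\leq \theta<1$ uniformly in $m(a_k)\in\{1,\dots,M\}$ once $R$ is large enough. Iterating gives $\sum_{V^{(k)}\subset V_a}\diam(V^{(k)})^t\leq \theta^{k-1}\diam(V_a)^t\to 0$ while individual diameters tend to $0$, so $\HD(E_R\cap V_a)\leq t$; the theorem follows on taking the supremum over $a$, then over $N$, and letting $t\searrow \rho M/(\beta+M+1)$.

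The main obstacle is achieving uniformity over the incoming multiplicity $m(a_k)$: the raw ratio contains a factor $R^{t/m(a_k)-t/m(a_{k+1})}$ that can grow with $R$ when $m(a_k)<m(a_{k+1})$. The resolution is that this growth is compensated exactly by the Borel tail decay from the constraint $|a_{k+1}|>R$, leaving the $R$-exponent $\rho-t(m'+\beta+1)/m'$ independent of $m(a_k)$ and strictly negative throughout the admissible range. Verifying this cancellation and checking that the critical threshold $\rho M/(\beta+M+1)$ arises precisely in the worst case $m'=M$ is the main quantitative step.
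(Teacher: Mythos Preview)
Your approach is essentially correct, but it takes a more elaborate route than the paper's. Both arguments cover $\{z:|F_+^n(z)|>R\ \forall n\ge 1\}$ by compositions of local inverse branches near the poles and estimate the $t$-sum of diameters; the difference lies in how the contraction is obtained.

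The paper never tracks the individual multiplicities through the induction. It simply bounds every exponent by its worst case: since $1+1/m(a_{i-1})+\beta/m(a_i)\ge (\beta+M+1)/M$ for all $i$, each $|a_i|$-factor is dominated by $|a_i|^{-t(\beta+M+1)/M}$, the $(n{+}1)$-fold sum factorises as a constant times $\bigl(MK^t\sum_{a\in\cP_R}|a|^{-t(\beta+M+1)/M}\bigr)^n$, and \emph{mere convergence} of the Borel series (no rate) lets one take $R$ large enough to make the base at most $1$. This sidesteps precisely the obstacle you identified with the cross-factor $R^{t/m(a_k)-t/m(a_{k+1})}$: by passing to the uniform exponent at the outset, no such factor ever appears.

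Your argument instead retains the multiplicities and relies on a quantitative tail estimate $\sum_{|a|>R}|a|^{-s}\lesssim R^{\rho-s}$ to absorb that factor. Two remarks. First, this bound is not a consequence of the equality of exponents in \eqref{Borel thm}; finite order only gives $n(r)=O(r^{\rho+\epsilon})$ and hence $\sum_{|a|>R}|a|^{-s}\lesssim R^{\rho+\epsilon-s}$ for each $\epsilon>0$. The weaker bound still suffices, since for $t>\rho M/(\beta+M+1)$ one may choose $\epsilon$ small enough to keep the net exponent $\rho+\epsilon-t(m'+\beta+1)/m'$ negative for every $m'\le M$, but the claim should be stated in this form. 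Second, your displayed formula for $\diam(V^{(k+1)})$ omits the accumulated derivative factors from the first $k$ steps; what you have written is really the single-step ratio $\diam(V^{(k+1)})/\diam(V^{(k)})$ (up to constants), which is indeed the quantity entering your contraction estimate.

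Both approaches deliver the bound. The paper's is shorter and uses only the convergence of the pole series; yours uses more information about the distribution of poles and makes explicit the cancellation of the incoming multiplicity, which could be of interest in settings where the uniform worst-case replacement is too lossy.
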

\begin{remark}\label{rem: escapers stay close to poles}
	The idea behind the proof relies on fact that $\infty$ is not an asymptotic value of $f_0$, nor $f_n$ for any $n$. This means that points which escape to infinity under iterates of $F_{+}$ must remain close to poles.  
\end{remark}
\begin{proof}
	Let $S^*=\min\set{2R^*,R^\dagger}$. Then, 
\begin{align}
	\dist{\Sing(f_0^{-1})}{\cP^*}>2S^*. \label{eqn: KU2}
\end{align} 
Let $0<S<S^*/2$ and choose $0<\ep<S^*-2S$. Then, the disks $B(a,S^*)$ are mutually disjoint as for $a\in\cP^*$. Taking $R_0$ sufficiently large with $R_0\geq \max\set{2R^*, R^\dagger}$, we have that 
\begin{align*}
	B_{R_0}\sub f_0(B(a,S^*))	
\end{align*}
 for each $a\in\cP^*$ since
\begin{align*}
	\absval{a}^{-\bt}(S^*)^{-m(a)}\lesssim\absval{a}^{-\bt}\lesssim 1.
\end{align*}
For $R>0$ denote 
\begin{align*}
	\cP_R:=\cP\cap B_R.
\end{align*}
Let $R_1\geq R_0$ sufficiently large such that 
\begin{align*}
	B_{R_0}\sub f_0(B(a,S^*))\spand \dist{\Sing(f_0^{-1})}{a}>2R^*
\end{align*}
for all $a\in\cP_{R_1}$, which must exist since $\cP\bs\cP^*$ is finite. For each $a\in\cP$ and $R>0$ we let $B_a(R)$ denote the connected component of $f_0^{-1}(B_R)$ which contains $a$. Then, for $R\geq R_0$ and $a\in\cP_{R_1}$ we have 
\begin{align}\label{B_a(R)sub}
	B_a(R)\sub B(a,S^*).
\end{align}
Now, hypothesis \ref{(KU4)} also implies that there is a constant $L\geq 1$ such that for all $a\in\cP$ and all $R\geq2R^*$ we have 
\begin{align}
\diam(B_a(R))\leq LR^{-1/m(a)}\absval{a}^{-\bt/m(a)}  \label{eqn: KU5}.
\end{align}
Choose $R_2\geq R_1$ sufficiently large such that for all $R\geq R_2$ we have 
\begin{align}\label{diam ineq with S}
	\diam(B_a(R))\leq LR^{-1/m(a)}\absval{a}^{-\bt/m(a)}
	\leq LS^{-1/m(a)}\absval{a}^{-\bt/m(a)}.
\end{align}
If 
\begin{align*}
	U\sub \left(B_{R_2}\bs\set{\infty}\right)\cap\left(\union_{a\in\cP}B(a,2R^*)\right)	
\end{align*}
is open and simply-connected, then all holomorphic inverse branches $f_{0,a,U,j}^{-1}$ of $f_0$, which take $U$ into $B(a,R^*)$, are all well defined for $1\leq j\leq m(a)$. Hypothesis \ref{(KU4)} then allows us to write 
\begin{align}\label{KU:eqn 6}
	\absval{\left(f_{0,a,U,j}^{-1}\right)'(z)}\comp\absval{z}^{-\frac{m(a)+1}{m(a)}}\absval{a}^{-\frac{\bt}{m(a)}}
\end{align}
for $z\in U$. Let $K\geq 1$ be the comparability constant for the previous equation \eqref{KU:eqn 6}. For two poles $a_1, a_2\in B_{2R_2}$ we denote by 
\begin{align*}
	f_{0,a_1,a_2,j}^{-1}:B(a_2,2R^*)\to\CC, \quad j=1\leq j\leq m(a_1),
\end{align*}
all inverse branches of $f_0$ which send the point $a_2$ to $a_1$. Considering \eqref{B_a(R)sub} and \eqref{KU:eqn 6} it then follows that 
\begin{align}\label{f0 inv R0 to S}
	f_{0,a_1,a_2,j}^{-1}(B(a_2,R^*))\sub B_{a_1}(2R_2-R^*)\sub B_{a_1}(R_2)\sub B(a_1, S)\sub B(a_1, R^*).
\end{align} 
Let $\seq{c_n}$ be a sequence in $\CC$ such that $\absval{c_n}<\ep$ for all $n\in\NN$, and define $f_n(z)=f_0(z)+c_n$ for all $n\in\NN$ and $z\in\CC$. Furthermore, let the function $F_+:\hat
\CC\to\hat{\CC}$ be defined by 
\begin{align*}
	F_+^n(z)=f_n\circ\dots\circ f_1(z)
\end{align*}
for all $n\in\NN$. By our choice of $S$ and $\ep>0$ we have that $z-c_n\in B(a,S^*)\sub B(a,R^*)$ for all $z\in B(a,2S)$ and $n\in\NN$. Thus, for poles $a_1,a_2\in B_{2R_2}$ the inverse branches $f_{n,a_1,a_2,j}^{-1}:B(a_2,2S)\to\CC$, $1\leq j\leq m(a_1)$, are well defined and given by 
\begin{align*}
	f_{n,a_1,a_2,j}^{-1}(z)=f_{0,a_1,a_2,j}^{-1}(z-c_n)
\end{align*} 
for $z\in B(a_2,2S)$. Moreover, in view of \eqref{f0 inv R0 to S}, we have that 
\begin{align}\label{fn sends S to S}
	f_{n,a_1,a_2,j}^{-1}(B(a_2,S))\sub B(a_1,S)
\end{align}
for each $n\in\NN$ and $1\leq j\leq m(a_1)$. Set 
\begin{align*}
	I_R(F_+):=\set{z\in\CC: \absval{F_+^n(z)}>R \text{ for all }n\geq 1}.
\end{align*}
Since $\sum_{a\in\cP}\absval{a}^{-u}$ converges if $u>\rho$, then given $t>\frac{\rho M}{\bt+M+1}$, there is $R_3\geq R_2$ sufficiently large such that 
\begin{align}\label{KU eqn 9}
	MK^t\sum_{a\in\cP_{R_3}}\absval{a}^{-t\frac{\bt+M+1}{M}}\leq 1.
\end{align}
Let $R_4>4R_3$ and define $I:=\cP_{R_3}$. Now, in view of \eqref{f0 inv R0 to S} and \eqref{fn sends S to S}, it follows that for every $n\in\NN$ and $R>2R_4$ the family of sets 
\begin{align*}
W_n=\set{f_{1,a_0,a_{1},j_0}^{-1}\circ\dots\circ f_{n,a_{n-1},a_n,j_{n-1}}^{-1}(B_{a_n}(R/2)):a_i\in I, 1\leq j_i\leq m(a_i), i=0,\dots,n}
\end{align*}
is well defined and covers $I_R(F_+)$. 
To see this we note that since $\infty$ is not an asymptotic value for $f_n$, each of the connected components of the inverse images of $B_R$ under $f_n$ contain neighborhoods of poles. 

In light of \eqref{diam ineq with S} and \eqref{KU:eqn 6}, we can write the following estimate 
\begin{alignat*}{2}
	\Sg_n &=&&
	\sum_{a_0\in I}\sum_{j_0=1}^{m(a_0)}\dots\sum_{a_{n-1}\in I}\sum_{j_{n-1}=1}^{m(a_{n-1})}\sum_{a_n\in I}\diam^t(f_{1,a_0,a_{1},j_0}^{-1}\circ\dots\circ f_{n,a_{n-1},a_n,j_{n-1}}^{-1}(B_{a_n}(R/2)))	\\
	&\leq&&
	\sum_{a_0\in I}\sum_{j_0=1}^{m(a_0)}\dots\sum_{a_{n-1}\in I}\sum_{j_{n-1}=1}^{m(a_{n-1})}\sum_{a_n\in I}\norm{(f_{1,a_0,a_{1},j_0}^{-1}\circ\dots\circ f_{n,a_{n-1},a_n,j_{n-1}}^{-1})'\rvert_{B_{a_n}(R/2)}}^t_\infty\diam^t(B_{a_n}(R/2))\\
	&\leq&&
	\sum_{a_0\in I}\sum_{j_0=1}^{m(a_0)}\dots\sum_{a_{n-1}\in I}\sum_{j_{n-1}=1}^{m(a_{n-1})}\sum_{a_n\in I}K^{nt} \left(\frac{\absval{a_{1}}^{-(m(a_0)+1)/m(a_0)}}{\absval{a_0}^{\bt/m(a_0)}}\right)^t\cdots\left(\frac{\absval{a_{n}}^{-(m(a_{n-1})+1)/m(a_{n-1})}}
	{\absval{a_{n-1}}^{\bt/m(a_{n-1})}}\right)^t\\
	& &&\times L^t\left(\frac{S}{2}\right)^{-\frac{t}{m(a_n)}}\frac{1}{\absval{a_n}^{t(\bt/m(a_n))}}\\
	&\leq&&
	L^t\left(\frac{2}{S}\right)^{\frac{t}{M}}K^{nt}\sum_{a_0\in I}\sum_{j_0=1}^{m(a_0)}\dots\sum_{a_{n-1}\in I}\sum_{j_{n-1}=1}^{m(a_{n-1})}\sum_{a_n\in I}\absval{a_0}^{-t(\bt/M)}\left(\absval{a_{n}}\cdots\absval{a_1}\right)^{-t\frac{\bt+M+1}{M}}\\
	&=&& 
	L^t\left(\frac{2}{S}\right)^{\frac{t}{M}}K^{nt}\sum_{a_0\in I}\sum_{j_0=1}^{m(a_0)}\dots\sum_{a_{n-1}\in I}\sum_{j_{n-1}=1}^{m(a_{n-1})}\sum_{a_n\in I}\left(\absval{a_{n}}\cdots\absval{a_0}\right)^{-t\frac{\bt+M+1}{M}}\\
	&\leq&&
	L^t\left(\frac{2}{S}\right)^{\frac{t}{M}}K^{nt}\left(\sum_{a\in I}\absval{a}^{-t\frac{\bt+M+1}{M}}\right)^nM^n\\
	&=&&
	L^t\left(\frac{2}{S}\right)^{\frac{t}{M}}\left(MK^t\sum_{a\in I}\absval{a}^{-t\frac{\bt+M+1}{M}}\right)^n.	
\end{alignat*}
Thus, \eqref{KU eqn 9} gives us that $\Sg_n\leq L^t(2/S)^{t/M}$ for each $n\in\NN$. Since the diameters of the sets of the covers $W_n$ converge to 0 uniformly as $n\to\infty$, we can estimate the $t$-dimensional Hausdorff measure to be
\begin{align*}
	H^t(I_R(F_+))\leq  L^t(2/S)^{t/M}.
\end{align*} 
Thus, we must have $\HD(I_R(F_+))\leq t$. Setting 
\begin{align*}
	I_{R,e}(F_+):=\set{z\in \CC:\liminfty{n}\absval{F_+^n(z)}>R}=\union_{n\geq 1}F^{-n}_+(I_R(F_+)),
\end{align*}
we see that 
\begin{align*}
	\HD(I_\infty(F_+))\leq\HD(I_{R,e}(F_+))=\HD(I_R(F_+))\leq t.	
\end{align*}
Letting $t\to\frac{\rho M}{\bt+M+1}$ provides the desired result.
\end{proof}
Together with Theorem \ref{UB of dimension of escaping set} the following theorem completes the proof of the first part of Theorem \ref{main thm: escaping set KU}.
\begin{theorem}\label{thm: KU f I_infty HD UB}
	If $f_0$ satisfies the same hypotheses \ref{(KU1)}-\ref{(KU4)} as in the previous theorem, then 
	\begin{align*}
		\HD(I_\infty(F_+))\geq\frac{\rho M^*}{\bt+M^*+1}.
	\end{align*}
\end{theorem}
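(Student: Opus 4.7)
The plan is to construct a non-stationary conformal iterated function system (NCIFS) $\Phi$ whose limit set $J_\Phi$ is contained in $I_\infty(F_+)$ and satisfies $\HD(J_\Phi)\geq t$ for every $t<\rho M^*/(\bt+M^*+1)$; sending $t$ to the critical exponent then proves the theorem. Fix such a $t$ and set $u:=t(\bt+M^*+1)/M^*<\rho$. By the defining property of $M^*$, the series $\sum_{a\in\cP^*\cap m^{-1}(M^*)}\absval{a}^{-u}$ diverges, hence so does every one of its tails over $\set{|a|\geq R}$.

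The NCIFS will be built \`a la Mayer by composing two inverse branches at each level, but with the Mayer $b$-points replaced by poles of $f_0$ so that the orbit is driven to infinity. Choose a sequence $(b_n)$ in $\cP^*\cap m^{-1}(M^*)$ with $|b_n|\to\infty$ at a slow (say polynomial) rate, and for each $n$ let $X_n:=\ol{B}(b_n,S)$ with $S$ as in the proof of Theorem~\ref{UB of dimension of escaping set}. For each finite subset $I^{(n)}\sub\cP^*\cap m^{-1}(M^*)$ (to be fixed below) and each $(a,j,k)\in I^{(n)}\times\set{1,\dots,M^*}^2$, set
\[
\gm^{(n)}_{a,j,k}:=\psi^{(2n-1)}_{b_{n-1},k}\circ\phi^{(2n)}_{a,j}\colon X_n\to X_{n-1},
\]
where $\phi^{(2n)}_{a,j}$ is the $j$-th inverse branch of $f_{2n}$ on $B(b_n,2S)$ landing inside $B(a,R^\dagger)$, and $\psi^{(2n-1)}_{b_{n-1},k}$ is the $k$-th inverse branch of $f_{2n-1}$ on $B(a,2S)$ landing inside $B(b_{n-1},R^\dagger)$. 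Both branches exist as soon as $|a|$ and $|b_n|$ exceed thresholds coming from \ref{(KU2)}--\ref{(KU4)} and the smallness of $|c_n|$, and the open set, bounded distortion (via Koebe), uniform contraction, geometry, cone, and diameter conditions follow from the routine verifications used in Mayer's proof, since each $X_n$ is a disk of constant diameter uniformly separated from $\Sing(f_0^{-1})$.

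For a point $z\in J_\Phi$ coded by $(a_n,j_n,k_n)_{n\geq1}$ one has $F^{2n-1}_+(z)\in B(a_n,R^\dagger)$ and $F^{2n}_+(z)\in X_n=\ol{B}(b_n,S)$, so because both $|a_n|$ and $|b_n|$ tend to infinity we obtain $z\in I_\infty(F_+)$, while Julia-set membership is guaranteed by Lemma~\ref{lem: deriv to infty implies Julia}. Applying \eqref{KU:eqn 6} twice yields
\[
\norm{(\gm^{(n)}_{a,j,k})'}\comp \absval{b_n}^{-(M^*+1)/M^*}\absval{b_{n-1}}^{-\bt/M^*}\absval{a}^{-(\bt+M^*+1)/M^*},
\]
so
\[
Z_{(n)}(t)\gtrsim (M^*)^2\,\absval{b_n}^{-t(M^*+1)/M^*}\absval{b_{n-1}}^{-t\bt/M^*}\sum_{a\in I^{(n)}}\absval{a}^{-u}.
\]
Choosing $I^{(n)}$ to consist of poles of large modulus in $\cP^*\cap m^{-1}(M^*)$ so that its partial sum dominates the penalty $|b_n|^{t(M^*+1)/M^*}|b_{n-1}|^{t\bt/M^*}$ by a large enough constant (possible because the tail $u$-Borel sums diverge while the penalty grows only polynomially in $n$) makes $K^{-t}Z_{(n)}(t)>1$ for all $n$; combined with \eqref{eqn: Zn ineq}, this forces $Z_n(t)\to\infty$, so $\ul{P}(t)>0$. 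Since $\log\#I^{(n)}=o(n)$ (the required partial sum is itself only polynomially large), $\Phi$ is finite and subexponentially bounded, and Theorem~\ref{BF for NCIFS} yields $\HD(J_\Phi)\geq t$.

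The principal obstacle is the simultaneous control of the two sequences $(b_n)$ and $(I^{(n)})$: $I^{(n)}$ must carry enough poles of multiplicity $M^*$ to dominate the derivative penalties at level $n$, yet $\log\#I^{(n)}$ must stay $o(n)$ to secure subexponential boundedness. The divergence of the $u$-Borel series over $\cP^*\cap m^{-1}(M^*)$ with $u<\rho$ provides just enough slack to meet both requirements when $|b_n|$ grows polynomially. A secondary technical point is verifying that $\psi$ is well defined on the image of $\phi$ and that cylinder images across distinct symbols are disjoint; these follow from hypotheses \ref{(KU2)}, \ref{(KU3)} together with the smallness of $|c_n|<\ep$, exactly as in the proof of Theorem~\ref{UB of dimension of escaping set}.
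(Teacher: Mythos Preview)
Your overall strategy matches the paper's: build a non-stationary NCIFS on disks $\ol{B}(b_n,S)$ centered at $M^*$-poles with $|b_n|\to\infty$, verify $J_\Phi\subset I_\infty(F_+)$, and bound $\HD(J_\Phi)$ from below via the pressure estimates and Theorem~\ref{BF for NCIFS}. Your derivative computation and the resulting form of $Z_{(n)}(t)$ are correct.

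The gap is in the subexponential boundedness of the alphabets. Your parenthetical justification---that $\log\#I^{(n)}=o(n)$ because ``the required partial sum is itself only polynomially large''---does not follow: you simultaneously require $\inf_{a\in I^{(n)}}|a|\to\infty$ and $\sum_{a\in I^{(n)}}|a|^{-u}\geq P_n$, but the hypotheses give only the \emph{qualitative} divergence of the $u$-Borel tail for $u<\rho$, and nothing bounds how many poles past a given modulus are needed to accumulate a prescribed partial sum. The paper sidesteps this entirely by a different device. It enumerates the $M^*$-poles $a_0,a_1,\dots$ by increasing modulus, partitions them into consecutive blocks $\{a_{\xi_{k-1}+1},\dots,a_{\xi_k}\}$ whose $u$-sums exceed $2K^{4\rho M^*/(\bt+M^*+1)}|a_k|^{2\rho}$, and then \emph{stretches the NCIFS time parameter} so that the maximum index appearing in $I^{(n)}$ increases by exactly one at each step $n$ (the $k$-th block is re-used for $\gamma_{k+1}=\xi_{k+2}-\xi_{k+1}$ consecutive values of $n$). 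This forces $\#I^{(n)}\leq n+O(1)$ regardless of how slowly the Borel tails diverge; the price is that the base disk must occasionally shift from $\ol{B}(a_k,S)$ to $\ol{B}(a_{k-1},S)$, which is handled by inserting an extra inverse branch (a $3$-fold rather than $2$-fold composition) at those transition times $n=\al_k$. Your approach can be salvaged by a diagonalization---let $|b_n|$ and the starting index of $I^{(n)}$ increase so slowly that the block length needed at step $n$ never exceeds $n$---but you have not supplied that argument, and the paper's time-stretching construction is precisely the missing idea.
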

\begin{proof}
	In order to prove Theorem \ref{thm: KU f I_infty HD UB} we follow the insights of Remark \ref{rem: escapers stay close to poles} in order to construct a non-stationary NCIFS which is contained in $I_\infty(F_+)$.  
	Let $R_0,\dots,R_4, S^*,S, \ep$ be as in the previous proof. Then, for $\seq{c_n}$ in $\CC$ with $\absval{c_n}<\ep$ and two poles $a,b\in B_{2R_2}$ we have 
	\begin{align}\label{fn sends S to S ascending}
		f_{n,a,b,1}^{-1}(\ol{B}(b,S))\sub \ol{B}(a,S).
	\end{align}
	Enumerate the set 
	\begin{align*}
		\sP:=\cP_{2R_2}\cap m^{-1}(M^*)=\set{a_0,a_1,\dots}	
	\end{align*}
	in such a way that $\absval{a_n}\leq \absval{a_{n+1}}$ for each $n\in\NN$. We may assume without loss of generality that $\absval{a_0}>1$, as if this were not the case we could simply increase the value of $R_2$. Recursively define a sequence $(\xi_n)_{n=0}^\infty$ of natural numbers as follows. Let $\xi_0=0$. Since $\sum_{a\in\cP}\absval{a}^{-u}$ converges if $u>\rho$, then for a fixed $t<\frac{\rho M^*}{\bt+M^*+1}$, for $n\geq 1$ let the number $\xi_{n,t}=\xi_n$, depending on $t$, be the least integer such that
	\begin{align}\label{sum bigger 2K}
		\sum_{j=\xi_{n-1}+1}^{\xi_n}\absval{a_j}^{-t\frac{\bt+M^*+1}{M^*}}\geq 2K^{\frac{4\rho M^*}{\bt+M^*+1}}\absval{a_n}^{2\left(\frac{\rho M^*}{\bt+M^*+1}\right)\left(\frac{\bt+M^*+1}{M^*}\right)}=2K^{\frac{4\rho M^*}{\bt+M^*+1}}\absval{a_n}^{2\rho},
	\end{align}
	where $K$ is defined as before, to be the constant of comparability coming from \eqref{KU:eqn 6}. As $\absval{a_n}\to\infty$ as $n\to\infty$ we see that $\xi_n\to\infty$ as $n\to\infty$ as well. For $n\geq 1$ define  
	\begin{align*}
		\gm_n:=\xi_{n+1}-\xi_n,
	\end{align*}
	let $\al_1=1$, and for $n\geq 2$ let
	\begin{align}\label{sums of betas}
	\al_n:=\sum_{j=2}^n\gm_j.
	\end{align}	
	Without loss of generality, we may assume that $\gm_1=\xi_1>1$, otherwise we may increase $R_2$ to be sufficiently large. Now we seek to define a NCIFS whose limit set sits inside of the set of escaping points. To that end, we begin by defining the alphabets on which our system operates. First we let 
	\begin{align*}
		I^{(\al_1)}=I^{(1)}=\set{1,\dots,\xi_1,\xi_1+1}.
	\end{align*}
	For each $1\leq j<\gm_2-1$ we let 
	\begin{align*}
		I^{(\al_1+j)}=I^{(1+j)}=\set{1,\dots,\xi_1,\xi_1+1,\dots,\xi_1+j}.
	\end{align*}	
	Then, for $j=\gm_2-1$ we have 
	\begin{align*}
		I^{(\al_2)}=I^{(\gm_2)}=\set{\xi_1+1,\dots,\xi_2}.
	\end{align*}
	In general, for $k\geq 2$ let 
	\begin{align*}
		I^{(\al_k)}=\set{\xi_{k-1}+1,\dots,\xi_k}
	\end{align*}
	and for $\al_k<n<\al_{k+1}$ with $n=\al_{k}+j$ for some $1\leq j\leq\gm_{k+1}-1$ let
	\begin{align*}
		I^{(n)}=I^{(\al_k+j)}=\set{\xi_{k-1}+1,\dots,\xi_k,\xi_k+1,\dots,\xi_k+j}.
	\end{align*}		
	Since the alphabets $I^{(n)}$ grow in size by at most one element with each time step, we clearly have that the alphabets grow subexponentially, that is
	\begin{align*}
		\lim_{n\to\infty}\frac{1}{n}\log\#I^{(n)}=0.
	\end{align*}
	With our alphabets defined we now wish to define a NCIFS over these alphabets which follows the orbit of $F_+$. To do this we first define the following counting function $T(n)$ which counts the number of iterations up to and through time $n$. Specifically, we let 
	\begin{align*}
		T(n)=2n+k  \;\text{ for }\; n=\al_k+j, 
	\end{align*} 
	where $0\leq j\leq\gm_{k+1}^*$ and  
	\[ 
	\gm_k^*=
	\begin{cases} 
	\gm_2-2\quad  & \text{ if }k=2 \\
	\gm_k-1 & \text{ if }k>2.  
	\end{cases}
	\]
	Now we define our contractions as follows. For $n=\al_k$, $k\geq 1$, and each $i\in I^{(n)}$ define the map $\phi_i^{(n)}:\ol{B}(a_k,S)\to\ol{B}(a_{k-1},S)$ by 
	\begin{align*}
		\phi_i^{(n)}:=f^{-1}_{T(n)-2,a_{k-1},a_{k},1}\circ f^{-1}_{T(n)-1,a_{i},a_{k},1}\circ f^{-1}_{T(n),a_{i},a_{k},1} 
	\end{align*}
	If $\al_k<n<\al_{k+1}$ with $n=\al_{k}+j$ for some $1\leq j\leq\gm_{k+1}-1$, then for each $i\in I^{(n)}$ we define the map $\phi_i^{(n)}:\ol{B}(a_k,S)\to\ol{B}(a_{k},S)$ by 
	\begin{align*}
	\phi_i^{(n)}:= f^{-1}_{T(n)-1,a_{i},a_{k},1}\circ f^{-1}_{T(n),a_{i},a_{k},1} 
	\end{align*}
	For each $n\in\NN$, with $\al_k\leq n<\al_{k+1}$, we define the functions $s,t$ as follows
	\[ 
	s(n)=
	\begin{cases} 
	k-1\quad  & \text{ if }n=\al_k \\
	k & \text{ if }\al_k<n<\al_{k+1}
	\end{cases}
	\]
	and
	\[
		t(n)=
			\begin{cases} 
			0\quad  & \text{ if }n=\al_1=1 \\
			k & \text{ if }1<\al_k<n<\al_{k+1}.  
			\end{cases}
	\]
	Letting
	\begin{align*}
		\Phi^{(n)}=\set{\phi_i^{(n)}:\ol{B}(a_{t(n)},S)\to\ol{B}(a_{s(n)},S): i\in I^{(n)}}
	\end{align*}
	denote the collection of contraction mappings at time $n$ on the sequence of closed disks $\left(\ol{B}(a_{t(n)},S)\right)_{n=0}^\infty$ defines a non-stationary NCIFS $\Phi$.	 
	Indeed, since each of the sets $\ol{B}(a_n,S)$ is convex, we have that the Uniform Cone, and Geometry Conditions are immediately satisfied. Furthermore the diameters are constant so we also have that the Diameter Condition is immediately satisfied as well. 
	By construction, we have that 
	\begin{align*}
		J_\Phi:=\intersect_{n=1}^\infty\union_{\om\in I^n}\phi_\om(\ol{B}(a_{t(n)},S))\sub I_\infty(F_+)\cap \ol{B}(a_0,S)
	\end{align*}
	since every pole is eventually discarded from the construction in favor of a pole of higher modulus. Furthermore, we have that $\Phi$ is subexponentially bounded. Thus, Bowen's formula holds. In order to find a lower bound for $\HD(I_\infty(F_+))$ we aim to find a lower bound for $B_\Phi=\HD(J_\Phi)$, which we accomplish by estimating $Z_{(n)}(t)$. We first consider the case when $n=\al_k$ for $k\geq 1$. Applying \eqref{sum bigger 2K}, in this case we have  	
	\begin{align}
		Z_{(n)}(t)&=\sum_{i\in I^{(n)}}\norm{\left(\phi_{i}^{(n)}\right)'}^t=\sum_{i=\xi_{k-1}}^{\xi_k}\norm{\left(f^{-1}_{T(n)-2,a_{k-1},a_{k},1}\circ f^{-1}_{T(n)-1,a_{i},a_{k},1}\circ f^{-1}_{T(n),a_{i},a_{k},1} \right)'}^t\nonumber\\
		&\geq K^{-3t}\sum_{i=\xi_{k-1}}^{\xi_k}\absval{a_k}^{-t\frac{M^*+1}{M^*}}\absval{a_{k-1}}^{-t\frac{\bt}{M^*}}\absval{a_k}^{-t\frac{\bt+M^*+1}{M^*}}\absval{a_i}^{-t\frac{\bt+M^*+1}{M^*}}\nonumber\\
		&\geq K^{-3t}\sum_{i=\xi_{k-1}}^{\xi_k}\absval{a_k}^{-2t\frac{\bt+M^*+1}{M^*}}\absval{a_i}^{-t\frac{\bt+M^*+1}{M^*}}\nonumber\\
		&\geq 2K^{\frac{3\rho M^*}{\bt+M^*+1}}K^{-3t}\absval{a_k}^{-2t\frac{\bt+M^*+1}{M^*}}\absval{a_k}^{2\rho}\nonumber\\
		&= 2K^{\frac{4\rho M^*}{\bt+M^*+1}-3t}\absval{a_k}^{2\left(\frac{\rho M^*}{\bt+M^*+1}-t\right)\frac{\bt+M^*+1}{M^*}}\label{eqn: Zn estimate NCGDMS 1}
	\end{align}
	For $\al_k< n<\al_{k+1}$ with $n=\al_k+j$ for $1\leq j\leq\gm^*_{k+1}$, again using \eqref{sum bigger 2K}, we similarly get
	\begin{align}
		Z_{(n)}(t)&=\sum_{i\in I^{(n)}}\norm{\left(\phi_{i}^{(n)}\right)'}^t=\sum_{i=\xi_{k-1}}^{\xi_k+j}\norm{\left( f^{-1}_{T(n)-1,a_{i},a_{k},1}\circ f^{-1}_{T(n),a_{i},a_{k},1} \right)'}^t\nonumber\\
		&\geq\sum_{i=\xi_{k-1}}^{\xi_k}\norm{\left( f^{-1}_{T(n)-1,a_{i},a_{k},1}\circ f^{-1}_{T(n),a_{i},a_{k},1} \right)'}^t\nonumber\\
		&\geq K^{-2t}\sum_{i=\xi_{k-1}}^{\xi_k}\absval{a_k}^{-t\frac{\bt+M^*+1}{M^*}}\absval{a_i}^{-t\frac{\bt+M^*+1}{M^*}}\nonumber\\
		&\geq 2K^{\frac{4\rho M^*}{\bt+M^*+1}}K^{-2t}\absval{a_k}^{-t\frac{\bt+M^*+1}{M^*}}\absval{a_k}^{2\rho}\nonumber\\
		&= 2K^{\frac{4\rho M^*}{\bt+M^*+1}-2t}\absval{a_k}^{\left(\frac{2\rho M^*}{\bt+M^*+1}-t\right)\frac{\bt+M^*+1}{M^*}}\nonumber\\
		&\geq 2K^{\frac{4\rho M^*}{\bt+M^*+1}-3t}\absval{a_k}^{2\left(\frac{\rho M^*}{\bt+M^*+1}-t\right)\frac{\bt+M^*+1}{M^*}}.	\label{eqn: Zn estimate NCGDMS 2}		
	\end{align}	
	Thus, in view of \eqref{eqn: Zn estimate NCGDMS 1}, \eqref{eqn: Zn estimate NCGDMS 2}, and the definition of $t(n)$, we have that for any $\al_k\leq n<\al_{k+1}$
	\begin{align}
		Z_{(n)}(t)\geq 2K^{\frac{4\rho M^*}{\bt+M^*+1}-3t}\absval{a_{t(n)}}^{2\left(\frac{\rho M^*}{\bt+M^*+1}-t\right)\frac{\bt+M^*+1}{M^*}}.	\label{eqn: Zn estimate NCGDMS}		
	\end{align}
	Applying \eqref{eqn: Zn ineq} and \eqref{eqn: Zn estimate NCGDMS} we see that 
	\begin{align*}
		Z_n(t)&\geq K^{-nt}Z_{(1)}(t)\cdots Z_{(n)}(t)\\
		&\geq 2^nK^{-nt}K^{n\left(\frac{4\rho M^*}{\bt+M^*+1}-3t\right)}\prod_{i=1}^n\absval{a_{t(i)}}^{2\left(\frac{\rho M^*}{\bt+M^*+1}-t\right)\frac{\bt+M^*+1}{M^*}}\\
		&\geq 2^nK^{4n\left(\frac{\rho M^*}{\bt+M^*+1}-t\right)}\absval{a_{t(n)}}^{2n\left(\frac{\rho M^*}{\bt+M^*+1}-t\right)\frac{\bt+M^*+1}{M^*}}.
	\end{align*}
	Thus, noting that $\absval{a_k}>1$ and $K\geq 1$, for $t<\frac{\rho M^*}{\bt+M^*+1}$ we have that $Z_n(t)\geq 2^n$. Consequently, $\ul{P}(t)>0$, which implies that 
	\begin{align*}
		t\leq\HD(J_\Phi)\leq \HD(I_\infty(F_+)). 
	\end{align*}
	Letting $t\to\frac{\rho M^*}{\bt+M^*+1}$ finishes the proof. 
\end{proof}
\begin{remark}
	We should point out that although we have chosen to present Theorem \ref{thm: KU f I_infty HD UB} within the generality of non-autonomous dynamics, the previous result, to the best of the author's knowledge, was not previously known even in the autonomous case.
\end{remark}
The following theorem differs from the previous two in two main ways. First, we no longer require that $\infty$ is not an asymptotic value for $f_0$, but rather, we will only require that $f_0$ has infinitely many poles. Second,  
our choice of the perturbative values $\ep,\dl$ will depend upon the value of $t$, and will inhibit our ability to find a satisfactory lower bound except in the case that $f_0$ is of divergence type. In particular, we prove the following.
\begin{theorem}\label{thm: UB KU hyp dim affine}
	Let $f_0:\CC\to\hat{\CC}$ be a transcendental meromorphic function with infinitely many poles that satisfies hypotheses \ref{(KU2)}-\ref{(KU4)}.
	 Then, for each $0\leq t<\frac{\rho M^*}{\bt +M^*+1}$ there exist $\ep_t,\dl_t>0$ such that if $c_t=\seq{c_n}$ and $\lm_t=\seq{\lm_n}$ are sequences in $\CC$ such that $\absval{c_n}<\ep_t$ and $\lm_n,\lm_n^{-1}\in B(1,\dl_t)$ for each $n\in\NN$, then
	 \begin{align*}
		 \HD(\jl_{r}(F_{A,\lm_t,c_t}))\geq t.
	 \end{align*}
	 If, in addition, $f_0$ is of divergence type, then there exists $\ep,\dl>0$, no longer depending on $t$, such that if $c=\seq{c_n}$ and $\lm=\seq{\lm_n}$ are sequences in $\CC$ such that $\absval{c_n}<\ep$ and $\lm_n,\lm_n^{-1}\in B(1,\dl)$ for each $n\in\NN$, then
	 \begin{align*}
		 \HD(\jl_{r}(F_{A,\lm,c}))\geq \frac{\rho M^*}{\bt +M^*+1}.
	 \end{align*}
\end{theorem}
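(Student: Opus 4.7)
The plan is to adapt the NCIFS construction from the proof of Theorem \ref{thm: KU f I_infty HD UB}, with two essential modifications. First, since we target the radial Julia set rather than the escaping set, we do not require orbits to escape to infinity, so we may work with a fixed finite collection of poles and build a \emph{stationary} NCIFS (in the spirit of Section \ref{sec: Mayer}). Second, to accommodate the multiplicative perturbation $\lm_n$, the bound $\dl_t$ on $\absval{1-\lm_n}$ must be chosen small relative to the maximum modulus of the poles used in the construction, since the inverse branches of $\hat f_n(z)=\lm_n f_0(z)+c_n$ take the form $w\mapsto f_{0,a,b,j}^{-1}((w-c_n)/\lm_n)$, and the domain shift $(w-c_n)/\lm_n-w$ has magnitude of order $\absval{c_n}+\absval{w}\dl_t$.

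Fix $t<\frac{\rho M^*}{\bt+M^*+1}$. Mirroring the setup of Theorem \ref{thm: KU f I_infty HD UB}, choose $R^*,R^\dagger,S^*,S$ so that the pole-to-pole inverse branches $f_{0,a_i,a_j,1}^{-1}$ are well defined and satisfy the analogues of \eqref{f0 inv R0 to S} and \eqref{KU:eqn 6}. Enumerate $\cP^*\cap m^{-1}(M^*)=\set{a_0,a_1,\dots}$ in order of increasing modulus. Since $t(\bt+M^*+1)/M^*<\rho$ lies strictly below the critical exponent of the Borel sum over $\cP^*\cap m^{-1}(M^*)$, we may pick $N_t\in\NN$ so that
\begin{align*}
    \sum_{i=0}^{N_t}\absval{a_i}^{-t(\bt+M^*+1)/M^*}\geq C_t,
\end{align*}
with $C_t>1$ chosen to absorb the Koebe constant $K$ and the comparability constants from \eqref{KU:eqn 6}. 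Set $M_t:=\max_{0\leq i\leq N_t}\absval{a_i}$, and then select $\ep_t,\dl_t>0$ small enough that, for all sequences $\seq{c_n},\seq{\lm_n}$ with $\absval{c_n}<\ep_t$ and $\lm_n,\lm_n^{-1}\in B(1,\dl_t)$, the affine inverse branches $\hat f_{n,a_i,a_j,1}^{-1}$ are well defined on $B(a_j,2S)$ and map $\ol{B}(a_j,S)$ into $\ol{B}(a_i,S)$ for all $0\leq i,j\leq N_t$. The governing inequality is essentially $\ep_t+M_t\dl_t\lesssim R^*$, which forces $\dl_t\to 0$ as $t$ approaches the critical exponent.

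Fix $a_0$ as the base point. Define a stationary NCIFS $\Phi=\seq{\Phi^{(n)}}$ over the alphabet $I^{(n)}=\set{0,1,\dots,N_t}$ on the constant set $X=\ol{B}(a_0,S)$ by
\begin{align*}
    \phi_i^{(n)}:=\hat f_{2n-1,a_0,a_i,1}^{-1}\circ \hat f_{2n,a_i,a_0,1}^{-1},
\end{align*}
which routes points through $\ol{B}(a_i,S)$ and back to $\ol{B}(a_0,S)$. Convexity of $X$, disjointness of pole neighborhoods, and Koebe distortion immediately verify all NCIFS axioms, so Theorem \ref{BF for NCIFS} applies. A direct computation paralleling that in Theorem \ref{thm: KU f I_infty HD UB}, now carrying additional factors $\absval{\lm_n}^{\pm t}\in[(1-\dl_t)^t,(1+\dl_t)^t]$, yields
\begin{align*}
    Z_{(n)}(t)\geq (1-\dl_t)^{2t}K^{-2t}\absval{a_0}^{-t(\bt+M^*+1)/M^*}\sum_{i=0}^{N_t}\absval{a_i}^{-t(\bt+M^*+1)/M^*}\geq 2,
\end{align*}
once $C_t$ is sufficiently large. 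Hence $Z_n(t)\geq 2^n$, giving $\ul{P}(t)>0$, and Bowen's formula produces $\HD(J_\Phi)\geq t$. By construction, at every even iterate a disk of radius comparable to $S$ is pulled back univalently along the orbit of each point of $J_\Phi$, and contraction forces $\absval{(F_A^{2n})'(z)}\to\infty$ for $z\in J_\Phi$; Lemma \ref{lem: deriv to infty implies Julia} then places $J_\Phi$ inside $\jl(F_A)$, and combined with the pullback property gives $J_\Phi\sub\jl_r(F_A)$.

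For the divergence type case, $\sum_{a\in\cP^*\cap m^{-1}(M^*)}\absval{a}^{-\rho}=\infty$ allows a fixed finite $N$ and constant $C$ with
\begin{align*}
    \sum_{i=0}^{N}\absval{a_i}^{-\rho}\geq C,
\end{align*}
independent of $t$. Then $M_0:=\max_{0\leq i\leq N}\absval{a_i}$ is fixed, and we select $\ep,\dl>0$ once and for all. The same stationary NCIFS yields $Z_{(n)}(t)\geq 2$ for every $t<\frac{\rho M^*}{\bt+M^*+1}$ simultaneously, whence $\HD(J_\Phi)\geq t$ uniformly in $t$; letting $t\to\frac{\rho M^*}{\bt+M^*+1}$ finishes the proof. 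The main obstacle throughout is ensuring the multiplicative perturbation does not disturb the inverse branch coverings: the constraint $\ep+M\dl\lesssim R^*$ is what ties $\dl$ to the maximum pole modulus used, and it is precisely the divergence type hypothesis that allows this modulus (and hence $\dl$) to be chosen independently of $t$.
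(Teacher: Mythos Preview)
Your proposal is correct and follows essentially the same route as the paper: a stationary NCIFS on $\ol{B}(a_0,S)$ built from two-step compositions $\hat f_{2n-1,a_0,a_i,1}^{-1}\circ\hat f_{2n,a_i,a_0,1}^{-1}$ over a finite pole set $\{a_0,\dots,a_{N_t}\}$, with $\ep_t,\dl_t$ tied to the maximum modulus $\absval{a_{N_t}}$ via an inequality of the form $(1+\dl_t)(\dl_t(1+\absval{a})+S)<S^*/2$, followed by a $Z_{(n)}(t)$ estimate and Bowen's formula. The paper's constants are organized slightly differently (it uses $I=\{a_1,\dots,a_{N_t}\}$ and makes the threshold $2K^{3\rho M^*/(\bt+M^*+1)}\absval{a_0}^\rho$ explicit), and in the divergence case it takes $t$ equal to the critical exponent directly rather than passing to the limit, but the argument is the same.
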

\begin{proof}
With the exception of the choice of $\ep$, the proof runs the same as the proof of Theorem \ref{UB of dimension of escaping set} up to \eqref{f0 inv R0 to S}, i.e. let $R_0,\dots, R_4, S^*,S$ be the same such that we have 
\begin{align}\label{subset show fn sends S to S}
	f_{0,a_1,a_2,j}^{-1}(B(a_2,R^*))\sub B_{a_1}(2R_2-R^*)\sub B_{a_1}(R_2)\sub B(a_1, S)\sub B(a_1, R^*).
\end{align} 
Again, let
\begin{align*}
	\sP:=\cP_{2R_2}\cap m^{-1}(M^*)=\set{a_0,a_1,\dots}	
\end{align*}
be enumerated, such that $\absval{a_n}\leq\absval{a_{n+1}}$ for all $a_n\in\sP$ and all $n\geq 0$, and again we assume that $R_2$ has been taken large enough such that $\absval{a_0}>1$. Now, since $\sum_{a\in\cP}\absval{a}^{-u}$ converges if $u>\rho$, then for $t<\frac{\rho M^*}{\bt+M^*+1}$, there is some $N_t\in\NN$, depending on $t$, such that 
\begin{align}\label{KU sum diverge}
	\sum_{n=1}^{N_t}\absval{a_n}^{-t\frac{\bt+M^*+1}{M^*}}\geq 2K^{3\cdot\left(\frac{\rho M^*}{\bt+M^*+1}\right)}\absval{a_0}^{\frac{\rho M^*}{\bt+M^*+1}\cdot\frac{\bt+M^*+1}{M^*}}=2K^{3\cdot\left(\frac{\rho M^*}{\bt+M^*+1}\right)}\absval{a_0}^{\rho}.
\end{align}
Let $I=\set{a_1,\dots, a_{N_t}}$. Choose $\ep_t,\dl_t>0$ such that the following hold"
\begin{itemize}
	\item $\ep_t<\dl_t$,
	\item $\dl_t<\frac{S^*-2S}{2S}$,
	\item $(1+\dl_t)(\dl_t(1+\absval{a})+S)<\frac{S^*}{2}$ for all $a\in I$.
\end{itemize}
Let $\seq{c_n}$ and $\seq{\lm_n}$ be sequences in $\CC$ such that $\absval{c_n}<\ep_t$ and $\lm_n,\lm_n^{-1}\in B(1,\dl_t)$ for each $n\in\NN$ and define $f_n:\CC\to\hat{\CC}$ to be the affine perturbation of $f_0$ at time $n$ given by 
\begin{align*}
	f_n(z)=\lm_nf_0(z)+c_n.
\end{align*} 
By our choice of $\ep_t,\dl_t$ we have that for each $a\in I$ and each $z\in B(a,S)$ 
\begin{align}\label{ineq show fn maps S to S}
	\frac{z-c_n}{\lm_n}\in B(a,S^*).
\end{align}
Indeed, 
\begin{align}
	\absval{\frac{z-c_n}{\lm_n}-a}&\leq\absval{\lm_n^{-1}}\left(\absval{c_n}+\absval{z-a}+\absval{1-\lm_n}\absval{a}\right)\nonumber\\
	&\leq(1+\dl_t)\left(\ep_t+S+\dl_t\absval{a}\right)\nonumber\\
	&\leq(1+\dl_t)\left(\dl_t(1+\absval{a})+S\right)<\frac{R_0}{2}.\label{ineq to show dl exists}
\end{align}
The requirement that $\dl_t<\frac{S^*-2S}{2S}$ ensures that such a $\dl_t$ exists. As it implies that 
$$
	(1+\dl_t)S<S^*/2,
$$
we see that solving \eqref{ineq to show dl exists} reduces to choosing
\begin{align*}
	0<\dl_t<\frac{-1+\sqrt{1+2S^*(1+\absval{a_{N_t}})^{-1}}}{2}.
\end{align*}
For each $a\in I$ we fix inverse branches of $f_n$
\begin{align*}
	f_{n,a,a_0,1}^{-1}:\ol{B}(a,S)\to\CC \spand f_{n,a_0,a,1}^{-1}:\ol{B}(a_0,S)\to\CC.
\end{align*}
Together \eqref{subset show fn sends S to S} and \eqref{ineq show fn maps S to S} gives us that 
\begin{align*}
	f_{n,a,a_0,1}^{-1}(\ol{B}(a,S))\sub\ol{B}(a_0,S) \spand 
	f_{n,a_0,a,1}^{-1}(\ol{B}(a_0,S))\sub\ol{B}(a,S).
\end{align*}
For each $n\in\NN$ and $a\in I$ we let the function $\phi_a^{(n)}$ be defined by 
\begin{align*}
	\phi_a^{(n)}:=f_{2n-1,a_0,a,1}^{-1}\circ f_{2n,a,a_0,1}^{-1}:\ol{B}(a_0,S)\to\ol{B}(a_0,S).
\end{align*} 
Then, each of the functions $\phi_a^{(n)}$ is a contraction, and as there are only finitely many of them, they are in fact uniformly contracting. 
Thus, the collection 
\begin{align*}
	\Phi=\left(\Phi^{(n)}\right)_{n\in\NN}=\left(\set{\phi_a^{(n)}:a\in I}\right)_{n\in\NN}
\end{align*}
forms a stationary NCIFS in the style of \cite{rempe-gillen_non-autonomous_2016}, for which Bowen's formula holds.
The limit set $J_\Phi$ of the NCIFS $\Phi$ is given by
\begin{align*}
	J_\Phi=\intersect_{n=1}^\infty\union_{\om\in I^n}\phi_\om(\ol{B}(a_0,S)).
\end{align*}
As $\absval{(\phi_\om)'(z)}\to0$ for $\absval{\om}=n\to\infty$, we have that $\absval{(F_A^{2n})'(z)}\to\infty$ as $n\to\infty$. Thus, Lemma \ref{lem: deriv to infty implies Julia} implies that $J_\Phi\sub\jl(F_A)$. 
In fact, by construction, we have that $J_\Phi\sub\cJ_r(F_A)$. Now, for each $n\in\NN$ we use \eqref{KU sum diverge} to estimate 
\begin{align*}
	Z_{(n)}(t)&=\sum_{a\in I}\norm{(\phi_a^{(n)})'}^t=\sum_{a\in I}\norm{(f_{2n-1,a_0,a,1}^{-1}\circ f_{2n,a,a_0,1}^{-1})'}^t\\
	&\geq K^{-2t}\sum_{a\in I}\absval{a_0}^{-t\frac{\bt+M^*+1}{M^*}}\absval{a}^{-t\frac{\bt+M^*+1}{M^*}}\\
	&\geq 2K^{\frac{3\rho M^*}{\bt+M^*+1}-2t}\absval{a_0}^{-t\frac{\bt+M^*+1}{M^*}}\cdot \absval{a_0}^\rho\\
	&=2K^{\frac{3\rho M^*}{\bt+M^*+1}-2t}\absval{a_0}^{\left(\frac{\rho M^*}{\bt+M^*+1}-t\right)\frac{\bt+M^*+1}{M^*}}.
\end{align*}
Since $t<\frac{\rho M^*}{\bt+M^*+1}$, $K\geq 1$, and $\absval{a_0}\geq 1$ for each $n\in\NN$, we have 
\begin{align*}
	Z_n(t)\geq K^{-nt}Z_{(1)}(t)\cdots Z_{(n)}(t)\geq 2^nK^{3n\left(\frac{\rho M^*}{\bt+M^*+1}-t\right)}\absval{a_0}^{n\left(\frac{\rho M^*}{\bt+M^*+1}-t\right)\frac{\bt+M^*+1}{M^*}}\geq 2^n.
\end{align*}
Thus, $\ul{P}(t)>0$ and hence, $\HD(J_\Phi)\geq t$, which finishes the proof of the first statement. 

Now, if $f_0$ is of divergence type, then for $t=\frac{\rho M^*}{\bt+M^*+1}$ we have that the sum
\begin{align*}
	\sum_{a_n\in\cP}\absval{a_n}^{-t\frac{\bt+M^*+1}{M^*}}=\infty,
\end{align*}
and as such, we are able to find $N_t<\infty$ as in \eqref{KU sum diverge}. Continuing the proof from there in the same manner as before, we see that there is $\ep,\dl>0$, which no longer depend on $t$, such that 
\begin{align*}
	\frac{\rho M^*}{\bt+M^*+1}\leq \HD(J_\Phi)\leq\HD(\jl_{r}(F_A)),
\end{align*} 
completing the proof. 
\end{proof}
\begin{remark}
	Note that our choices of $\ep,\dl$ must go to zero as $t$ approaches the critical exponent unless we know that the function $f_0$ is of divergence type. This is precisely because in the case where $f_0$ is of divergence type we are assured a finite number $N_t$ such that the sum \eqref{KU sum diverge} is sufficiently large.
	If $f_0$ is not of divergence type, then we must choose $N_t$ equal to $\infty$, which necessarily means that the values $\ep,\dl$ must be equal to zero as they are tied to the value of $N_t$ in an inverse manner.
\end{remark}
\section{Eventual Dimensions}\label{sec ED}
In this section we collect together several results, some of which are new and some of which are already known, concerning the eventual dimension and the eventual hyperbolic dimension of several classes of transcendental functions. In particular, we provide results for the two main classes which have already been discussed. 

The \textit{eventual dimension} of a function $f$, given by
\begin{align*}
	\ED(f)=\limty{R}\HD(\set{z\in\cJ(f):\absval{f^n(z)}> R, \; \forall n\geq 1}),
\end{align*} 
was first introduced by Rempe-Gillen and Stallard for entire functions $f$ in \cite{rempe_hausdorff_2009}, though it had been used implicitly before by several authors. The definition, however, is equally valid in the case that $f$ is meromorphic.
The following proposition was proven by Rempe-Gillen and Stallard first in the case of transcendental entire functions, but their same proof holds more generally for transcendental meromorphic functions. 
\begin{proposition}
	Let $f$ be a transcendental meromorphic function. Then, 
\begin{align}\label{ED I infty ineq}
	\HD(I_\infty(f))\leq\ED(f)\leq\HD(\cJ(f)).
\end{align}
\end{proposition}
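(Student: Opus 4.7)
The plan is to handle the two inequalities separately. The upper bound $\ED(f)\leq\HD(\cJ(f))$ is essentially tautological: for each $R>0$ the set
$A_R:=\{z\in\cJ(f):|f^n(z)|>R,\;\forall n\geq 1\}$
is a subset of $\cJ(f)$, so $\HD(A_R)\leq\HD(\cJ(f))$, and taking $R\to\infty$ gives the desired inequality. Since $R\mapsto A_R$ is monotonically decreasing in $R$, the limit defining $\ED(f)$ actually exists and equals $\inf_{R}\HD(A_R)$, which is a useful observation for the other direction as well.

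For the interesting inequality $\HD(I_\infty(f))\leq\ED(f)$, the main idea is that although a point $z\in I_\infty(f)$ need not lie in $A_R$ itself, some forward iterate of $z$ must. Concretely, if $z\in I_\infty(f)$, then $|f^n(z)|\to\infty$, so for every $R>0$ there exists $N=N(z,R)\in\NN$ with $|f^n(z)|>R$ for all $n\geq N$, and hence $f^{N}(z)\in A_R$ (using also that $f^N(z)\in\cJ(f)$ since the Julia set is completely invariant). Consequently
\begin{align*}
I_\infty(f)\;\subset\;\bigcup_{N=0}^\infty f^{-N}(A_R).
\end{align*}

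The next step is to argue that taking holomorphic preimages does not raise Hausdorff dimension, so that $\HD(f^{-N}(A_R))=\HD(A_R)$ for each $N$. I would cover $f^{-N}(A_R)$ by countably many relatively compact open sets on which $f^N$ is either univalent, or decomposes around its finitely (locally) many critical points and poles as a composition of a univalent map with a finite branched cover $w\mapsto w^k$; away from the discrete critical/pole set, $f^N$ is locally bi-Lipschitz and hence preserves $\HD$, while the discrete exceptional set is zero-dimensional and so contributes nothing. Combined with the countable stability of Hausdorff dimension, this yields
\begin{align*}
\HD(I_\infty(f))\;\leq\;\HD\!\left(\bigcup_{N=0}^\infty f^{-N}(A_R)\right)\;=\;\sup_{N\geq 0}\HD(f^{-N}(A_R))\;=\;\HD(A_R).
\end{align*}
Since this holds for every $R>0$, passing to the limit $R\to\infty$ yields $\HD(I_\infty(f))\leq\ED(f)$.

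The main (mild) obstacle is the preimage-invariance of Hausdorff dimension for meromorphic $f$: it is classical for holomorphic maps on $\CC$, but one must check that the presence of poles of $f$ (and prepoles of $f^N$) causes no trouble. This is handled by noting that poles of $f^N$ form a countable, discrete set and so can be excised without affecting Hausdorff dimension, after which the usual local bi-Lipschitz/branched-cover argument applies on each piece.
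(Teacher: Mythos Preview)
Your proof is correct and follows the standard approach. The paper does not actually supply its own proof of this proposition---it attributes the result to Rempe-Gillen and Stallard and simply notes that their argument carries over to meromorphic functions---but the idea you use (writing $I_\infty(f)\subset\bigcup_{N\geq 0}f^{-N}(A_R)$ and invoking countable stability together with the fact that holomorphic preimages do not increase Hausdorff dimension) is exactly the mechanism the paper employs elsewhere, for instance at the end of the proof of Theorem~\ref{UB of dimension of escaping set}, where it writes $I_{R,e}(F_+)=\bigcup_{n\geq 1}F_+^{-n}(I_R(F_+))$ and immediately concludes $\HD(I_{R,e}(F_+))=\HD(I_R(F_+))$.
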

In \cite{bergweiler_hausdorff_2012}, Bergweiler and Kotus show that for a transcendental meromorphic function $f\in\cB$ of finite order $\rho$ such that $\infty$ is not an asymptotic value and there is some $M\in\NN$ such that the multiplicity of co-finitely many poles is at most $M$, then 
\begin{align*}
\HD(I_\infty(f))\leq\ED(f)\leq \frac{2M\rho}{2+M\rho}.
\end{align*}
In fact, they provide a function $f$ such that
\begin{align*}
\HD(I_\infty(f))=\frac{2M\rho}{2+M\rho}\spand \HD\left(\set{z\in\CC: \liminfty{n}\absval{f^n(z)}\geq R}\right)>\frac{2M\rho}{2+M\rho}
\end{align*} 
for all $R>0$. In particular, we see that there is a transcendental meromorphic function $f$ such that 
\begin{align*}
\HD(I_\infty(f))<\ED(f).
\end{align*}
This of course shows that the first inequality of \eqref{ED I infty ineq} may in fact be strict and the two quantities need not be equal.  

The notion of the \textit{eventual hyperbolic dimension} of a function $f$, which was introduced by De Zotti and Rempe-Gillen for entire functions, is given by 
\begin{align*}
	\EHD_1(f)=\sup\set{\HD(X): X\sub B_R\text{ is hyperbolic for }f},
\end{align*}
where the set $X\sub\CC$ is \textit{hyperbolic} for $f$ if $X$ is compact and forward invariant such that for some $n\in\NN$ and some $\lm>1$ we have
\begin{align*}
	\absval{(f^n)'\rvert_X}>\lm.
\end{align*} 
Again, this definition is valid for meromorphic functions. In \cite{rempe_hyperbolic_2009} Rempe-Gillen shows that the hyperbolic dimension of a function $f$ is the same as the Hausdorff dimension of its radial Julia set, i.e.
\begin{align*}
	\HypDim(f)=\HD(\cJ_r(f)).
\end{align*}
The same proof shows that the same relationship between the dimension of hyperbolic sets and the dimension of the radial Julia set is also true for the eventual hyperbolic dimension of a meromorphic function $f$. Indeed, we have the following.
\begin{theorem}
	Given a meromorphic function $f:\CC\to\hat{\CC}$, the quantities $\EHD_1(f)$ and 
	\begin{align*}
	\EHD_2(f):=\limty{R}\HD\left(\set{z\in\cJ_r(f):\absval{f^n(z)}>R,\;\forall n\geq 0}\right)
	\end{align*}
	exist and are equal. We call their common value the eventual hyperbolic dimension of $f$, and denote it by 
	\begin{align*}
	\EHD(f)=\EHD_1(f)=\EHD_2(f).
	\end{align*}
\end{theorem}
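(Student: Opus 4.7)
The plan is to mimic Rempe-Gillen's proof in \cite{rempe_hyperbolic_2009} that $\HypDim(f)=\HD(\cJ_r(f))$, adapting it to the ``eventual'' regime where orbits must stay outside some disk $B_R$. Interpreting $\EHD_1(f)$ as $\lim_{R\to\infty}\sup\set{\HD(X):X\sub B_R\text{ hyperbolic for }f}$, to match the definition of $\EHD(f)$ from the introduction, both $\sup\set{\HD(X):X\sub B_R\text{ hyperbolic}}$ and $\HD\set{z\in\cJ_r(f):|f^n(z)|>R,\,\forall n\geq0}$ are non-increasing in $R$ and bounded below by $0$, so the monotone limits exist.

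For the easy inequality $\EHD_1(f)\leq\EHD_2(f)$, I would fix $R>0$ and a hyperbolic set $X\sub B_R$ for $f$, so $|(f^n)'|>\lambda>1$ on $X$ for some $n$. Forward invariance of $X$ gives $|f^k(z)|>R$ for every $z\in X$ and $k\geq 0$. A neighborhood of $X$ is disjoint from the singular set by compactness, and uniform expansion of $f^n$ on $X$ together with Koebe's $1/4$-Theorem (Theorem \ref{KDT1/4}) and Koebe's Distortion Theorem (Theorem \ref{KDT}) allows one to pull back a disk of some definite radius $\delta>0$ univalently along every orbit in $X$, whence $X\sub\cJ_r(f)$. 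Combining, $X\sub\set{z\in\cJ_r(f):|f^n(z)|>R,\,\forall n\geq 0}$, so $\HD(X)\leq\HD\set{z\in\cJ_r(f):|f^n(z)|>R,\,\forall n\geq 0}$. Taking the supremum over $X$ and then the limit as $R\to\infty$ yields the claim.

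For the reverse inequality $\EHD_2(f)\leq\EHD_1(f)$, fix $R>0$, $\epsilon>0$, and $t<\HD(A_R)$, where $A_R:=\set{z\in\cJ_r(f):|f^n(z)|>R,\,\forall n\geq 0}$. The goal is to produce a hyperbolic set $X\sub B_{R-\epsilon}$ with $\HD(X)\geq t-\epsilon$; sending $R\to\infty$ (so $R-\epsilon\to\infty$) and then $\epsilon\to 0$ then gives the inequality. For each $N\in\NN$ and $\eta>0$ let $A_R^{N,\eta}$ denote the ``uniformly radial'' subset of points $z\in A_R$ that admit at least $N$ distinct indices $n$ for which $B(f^n(z),\eta)$ pulls back univalently along $z,f(z),\dots,f^n(z)$. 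Since $A_R=\bigcup_{N,\eta}A_R^{N,\eta}$, subadditivity of Hausdorff dimension supplies $N_0,\eta_0$ and a compact $K\sub A_R^{N_0,\eta_0}$ with $\HD(K)>t$, and by further restriction we may ensure $|f^n(z)|\geq R'>R-\epsilon/2$ for all $z\in K$ and $n\geq 0$. Using density points of $K$ together with the available univalent pullbacks, one constructs, in the style of the NCIFS apparatus of Section~\ref{Sec: NCIFS}, a finite autonomous conformal iterated function system whose generators are inverse branches of a high iterate of $f$: Koebe distortion supplies bounded distortion and uniform contraction, while $\eta$ chosen small relative to $R'-R+\epsilon$ keeps all images inside $B_{R-\epsilon}$. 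Bowen's formula (Theorem \ref{BF for NCIFS}) then produces a limit set $X\sub B_{R-\epsilon}$ with $\HD(X)\geq t-\epsilon$, automatically compact, forward invariant under a high iterate of $f$, and uniformly expanding, hence hyperbolic.

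The main technical obstacle is the last step: arranging that the hyperbolic approximants of $A_R$ in fact remain in $B_R$, up to arbitrarily small loss. This is handled by first restricting to a compact $K$ on which the entire forward orbit is uniformly bounded away from $\partial B_R$, and then exploiting the freedom to choose the pullback scale $\eta$ small enough that the resulting IFS images are never displaced below the threshold $R-\epsilon$. All remaining ingredients -- Koebe distortion, subadditivity of Hausdorff dimension over countable unions, and Bowen's formula for finite subexponentially bounded IFSs -- are already available from the earlier sections of the paper.
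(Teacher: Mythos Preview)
Your overall strategy is correct and matches what the paper does: it explicitly defers to Rempe--Gillen's argument in \cite{rempe_hyperbolic_2009}, and the (suppressed) adaptation follows that template closely. The easy inequality $\EHD_1\leq\EHD_2$ is handled exactly as you describe. The differences lie in the hard inequality.

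For $\EHD_2\leq\EHD_1$, the paper (following Rempe--Gillen) works with a countable basis of \emph{disks of univalence} $D_i\sub B_R$, and for each $D_i$ considers the set $\cJ_r(D_i)$ of points in $\cJ_r(f,R)$ for which $D_i$ is such a disk. Modulo the countable set of critical orbits, $\cJ_r(D_i)$ is backward invariant, so one can localize and work entirely with $A=\widetilde{\cJ}_r(D)\cap D$ for a single disk $D\sub B_R$; this already guarantees that the resulting IFS lives inside $B_R$ with no $\epsilon$-loss. The finite IFS is then extracted not via density points but via the $5r$-covering theorem: since $\HD(A)>\hat d_R$, any fine cover has large $\hat d_R$-sum, and one covers $A$ by pullbacks $U_a\sub D$ of $D$ itself, applies $5r$ to pass to a disjoint subfamily, and truncates to finitely many so that $\sum_j \vta_{a_j}^{-\hat d_R}>1$.

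Your route via the sets $A_R^{N,\eta}$ and ``density points'' can be made to work, but two points need tightening. First, your union decomposition is redundant in $N$: every $z\in A_R$ already lies in $A_R^{N,\eta}$ for \emph{all} $N$ once $\eta\leq\delta(z)$, so the countable union is effectively over $\eta$ alone; this does not harm the argument but should be stated correctly. Second, ``density points'' by themselves do not hand you a finite family of inverse branches with $\sum\norm{\phi_j'}^{t}>1$; you still need a covering/selection step, and the cleanest such step is precisely the $5r$-covering argument the paper uses. Once you plug that in, your approach and the paper's converge. The paper's backward-invariance trick buys a slightly cleaner containment (the hyperbolic set sits inside a fixed disk $D\sub B_R$ rather than merely in $B_{R-\epsilon}$), while your approach is perhaps more direct in avoiding the auxiliary notion of a disk of univalence.
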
 
\begin{remark}
	Notice that the notion of eventual dimension immediately generalizes to include all non-autonomous functions and even though the idea of a hyperbolic set is not clear for non-autonomous dynamics. In light of the previous theorem, we take the eventual hyperbolic dimension of a general non-autonomous function to be the Hausdorff dimension of its radial Julia set. 
\end{remark}
Clearly by definition, specifically the definition of $\EHD_2(f)$, we have that 
\begin{align}\label{EHD ineq}
	\EHD(f)\leq \ED(f) \spand \EHD(f)\leq\HD(\cJ_r(f)).
\end{align}
Together with \eqref{EHD ineq}, the following theorem completes the proof of Theorem \ref{main thm: escaping set KU}. Its proof follows from the proofs of Theorems \ref{UB of dimension of escaping set} and \ref{thm: KU f I_infty HD UB} by letting $R_2\to\infty$ as each proof relies on the construction of a NCIFS contained sufficiently well within $B_{R_2}$.
\begin{theorem}
	Suppose $f_0$ satisfies the hypotheses of Theorem \ref{UB of dimension of escaping set}. Then, there is $\ep>0$ such that if $\seq{c_n}$ is a sequence in $\CC$ with $\absval{c_n}<\ep$ for all $n\in\NN$, then 
	\begin{align*}
	\frac{\rho M^*}{\bt+M^*+1}\leq \EHD(F_+) \leq\ED(F_+)\leq\frac{\rho M}{\bt+M+1}.
	\end{align*}
\end{theorem}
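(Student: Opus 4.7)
My plan is to push the arguments of Theorems \ref{UB of dimension of escaping set} and \ref{thm: KU f I_infty HD UB} a little further; the middle inequality $\EHD(F_+) \leq \ED(F_+)$ is already recorded in \eqref{EHD ineq}, so only the outer bounds require new work.

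For the upper bound $\ED(F_+) \leq \tfrac{\rho M}{\bt + M + 1}$, I would revisit the proof of Theorem \ref{UB of dimension of escaping set}. That argument actually produces, for every $t > \tfrac{\rho M}{\bt+M+1}$ and every sufficiently large $R$, an explicit cover $W_n$ of
\begin{align*}
    I_R(F_+) = \set{z \in \CC : \absval{F_+^n(z)} > R \ \text{for all } n \geq 1}
\end{align*}
whose $t$-dimensional Hausdorff sum is bounded by $L^t(2/S)^{t/M}$, independently of $n$. This yields $\HD(I_R(F_+)) \leq t$ for every such $R$, hence
\begin{align*}
    \ED(F_+) = \limty{R}\HD\bigl(\set{z \in \cJ(F_+) : \absval{F_+^n(z)} > R, \ \forall n \geq 1}\bigr) \leq t.
\end{align*}
Letting $t$ decrease to $\tfrac{\rho M}{\bt + M + 1}$ gives the upper bound.

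For the lower bound $\EHD(F_+) \geq \tfrac{\rho M^*}{\bt + M^* + 1}$, the idea is to rerun the construction from the proof of Theorem \ref{thm: KU f I_infty HD UB} with the auxiliary radius $R_2$ taken arbitrarily large. Fix $t < \tfrac{\rho M^*}{\bt + M^* + 1}$ and $R > 0$, and choose $R_2$ so that $2R_2 - S > R$. Every pole $a \in \sP = \cP_{2R_2} \cap m^{-1}(M^*)$ used in the construction then satisfies $\ol{B}(a, S) \subset B_R$. Because each generator $\phi_i^{(n)}$ is a composition of inverse branches of individual $f_j$'s, each of which sends one closed disk $\ol{B}(a, S)$ with $a \in \sP$ into another such disk, the full forward $F_+$-orbit of any point of the limit set $J_\Phi$ visits only pole neighborhoods $\ol{B}(a, S) \subset B_R$. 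Combined with the containment $J_\Phi \subset \cJ_r(F_+)$ inherent to the NCIFS construction, this yields
\begin{align*}
    J_\Phi \subset \cJ_r(F_+) \cap \set{z : \absval{F_+^n(z)} > R, \ \forall n \geq 1}.
\end{align*}
The pressure estimates from that proof then give $\HD(J_\Phi) \geq t$. Letting $R \to \infty$ followed by $t \nearrow \tfrac{\rho M^*}{\bt + M^* + 1}$ completes the lower bound.

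The piece of bookkeeping I expect to be the trickiest is verifying that \emph{every} forward iterate, and not merely those corresponding to the completion of a full $\phi_i^{(n)}$ block, lands in some $\ol{B}(a, S)$ with $a \in \sP$. This reduces to inspecting each intermediate inverse branch in the composition defining $\phi_i^{(n)}$ and invoking \eqref{fn sends S to S}; no new distortion or pressure estimates are required, and since the constants $K$, $L$ and those from hypothesis \ref{(KU4)} are independent of $R_2$, all previous estimates survive the limit $R_2 \to \infty$ unchanged.
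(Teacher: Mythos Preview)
Your proposal is correct and matches the paper's own argument, which is stated in a single sentence: the proof ``follows from the proofs of Theorems \ref{UB of dimension of escaping set} and \ref{thm: KU f I_infty HD UB} by letting $R_2\to\infty$ as each proof relies on the construction of a NCIFS contained sufficiently well within $B_{R_2}$.'' Your elaboration of the upper bound (that the cover $W_n$ already controls $\HD(I_R(F_+))$ for all large $R$) and of the lower bound (pushing $R_2$ out so that every pole neighborhood used lies in $B_R$, and invoking \eqref{fn sends S to S} to track intermediate iterates) is exactly what the paper has in mind; the bookkeeping point you flag is real but, as you note, is handled immediately by the fact that every factor $f_{j,a,b,1}^{-1}$ in the compositions $\phi_i^{(n)}$ maps $\ol{B}(b,S)$ into $\ol{B}(a,S)$ with $a\in\sP\subset B_{2R_2}$.
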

The same alteration made to the proof of Theorem \ref{thm: UB KU hyp dim affine}, i.e. letting $R_2\to\infty$, gives the following theorem which together with \eqref{EHD ineq} and Theorem \ref{thm: UB KU hyp dim affine} finally completes the proof of Theorem \ref{main thm KU hyper}.
\begin{theorem}
	Let $f_0:\CC\to\hat{\CC}$ be a transcendental meromorphic function with infinitely many poles that satisfies hypotheses \ref{(KU2)}-\ref{(KU4)}.
	Then, for each $0\leq t<\frac{\rho M^*}{\bt +M^*+1}$ there exist $\ep_t,\dl_t>0$ such that if $c_t=\seq{c_n}$ and $\lm_t=\seq{\lm_n}$ are sequences in $\CC$ such that $\absval{c_n}<\ep_t$ and $\lm_n,\lm_n^{-1}\in B(1,\dl_t)$ for each $n\in\NN$, then
	\begin{align*}
		\EHD(F_{A,\lm_t,c_t})\geq t.
	\end{align*}
	If, in addition, $f_0$ is of divergence type, then there exists $\ep,\dl>0$, no longer depending on $t$, such that if $c=\seq{c_n}$ and $\lm=\seq{\lm_n}$ are sequences in $\CC$ such that $\absval{c_n}<\ep$ and $\lm_n,\lm_n^{-1}\in B(1,\dl)$ for each $n\in\NN$, then
	\begin{align*}
		\EHD(F_{A,\lm,c})\geq \frac{\rho M^*}{\bt +M^*+1}.
	\end{align*}
\end{theorem}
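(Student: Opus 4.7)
The plan is to extract the result from Theorem \ref{thm: UB KU hyp dim affine} by invoking the convention, recorded in the remark following the eventual hyperbolic dimension discussion, that for a non-autonomous map $F$ one takes $\EHD(F) := \HD(\cJ_r(F))$. Under that convention the inequality $\EHD(F_{A,\lm_t,c_t}) \geq t$ is literally Theorem \ref{thm: UB KU hyp dim affine}, and the second, divergence-type, statement is literally its second conclusion. The only work of substance is to check that the construction there automatically places the limit set deep enough to witness the eventual nature of $\EHD$.

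To make this explicit, and to honor the hint that the proof is obtained by letting $R_2 \to \infty$ in the proof of Theorem \ref{thm: UB KU hyp dim affine}, I would fix an arbitrary target $R > 0$ and then enlarge $R_2$ so that $R_2 \geq R + S^*$. With this choice I re-enumerate $\sP = \cP_{2R_2} \cap m^{-1}(M^*) = \set{a_0, a_1, \dots}$ by increasing modulus, pick $N_t$ via \eqref{KU sum diverge} and perturbative bounds $\ep_t, \dl_t$ exactly as in the earlier proof, and form the same non-autonomous contractions $\phi_a^{(n)} = f^{-1}_{2n-1, a_0, a, 1} \circ f^{-1}_{2n, a, a_0, 1}$. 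The identical pressure estimate yields $Z_n(t) \geq 2^n$ and hence $\HD(J_\Phi) \geq t$. Because every pole used has modulus at least $2R_2 > R + S^*$, every forward iterate $F_A^k(z)$ of a point $z \in J_\Phi$ lies in some $\ol{B}(a, S)$ with $a \in \sP \cup \set{a_0}$, whence $\absval{F_A^k(z)} \geq 2R_2 - S > R$. Thus $J_\Phi \sub \set{z \in \cJ_r(F_A) : \absval{F_A^k(z)} > R, \; \forall k \geq 1}$, so this slice has Hausdorff dimension at least $t$ for every $R$.

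The main obstacle, exactly mirroring the one in Theorem \ref{thm: UB KU hyp dim affine}, is the dependence of the bound \eqref{ineq to show dl exists} on $\absval{a_{N_t}}$: as $R_2 \to \infty$ the permissible $\dl_t$ shrinks, so a single pair $(\ep_t, \dl_t)$ cannot simultaneously accommodate arbitrarily large $R_2$. This is bypassed by combining \eqref{EHD ineq} with the identification $\EHD(F_A) = \HD(\cJ_r(F_A))$ in the non-autonomous setting, so that once $\HD(\cJ_r(F_A)) \geq t$ has been secured by Theorem \ref{thm: UB KU hyp dim affine} for one admissible value of $R_2$, the bound $\EHD(F_A) \geq t$ follows immediately with the same $\ep_t, \dl_t$. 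Finally, when $f_0$ is of divergence type, the series at the critical exponent $t^* = \rho M^*/(\bt + M^* + 1)$ diverges, so $N_{t^*}$ can be chosen finite at $t = t^*$ itself and the resulting $\ep, \dl$ no longer depend on $t$, reproducing the stronger statement verbatim from Theorem \ref{thm: UB KU hyp dim affine}.
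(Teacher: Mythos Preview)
Your proposal is correct and matches the paper's approach: the paper's proof is literally the one-line remark that the theorem follows from the construction in Theorem~\ref{thm: UB KU hyp dim affine} by letting $R_2\to\infty$, and you carry this out in detail. Your reading of the remark following the $\EHD_1=\EHD_2$ theorem---that for non-autonomous $F$ one \emph{defines} $\EHD(F):=\HD(\cJ_r(F))$---is the literal one, and under it the first conclusion is indeed verbatim Theorem~\ref{thm: UB KU hyp dim affine}, so the reference to \eqref{EHD ineq} is superfluous (that inequality points the wrong way for a lower bound).

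Your observation about the obstacle is a genuine and useful clarification that the paper's terse proof glosses over: if one instead interprets $\EHD$ for non-autonomous maps via the $\EHD_2$-type limit $\lim_{R\to\infty}\HD(\cJ_r(F_A,R))$, then pushing $R_2\to\infty$ forces $\absval{a_{N_t}}\to\infty$ through the constraint $(1+\dl_t)(\dl_t(1+\absval{a})+S)<S^*/2$, and hence $\dl_t\to 0$, so no single pair $(\ep_t,\dl_t)$ works for all $R$. You are right that the paper's stated convention is precisely what sidesteps this; without it the theorem as written would not follow from the indicated modification. The divergence-type case needs no separate argument beyond what you give, since finiteness of $N_{t^*}$ at the critical exponent fixes $(\ep,\dl)$ once and for all, exactly as in Theorem~\ref{thm: UB KU hyp dim affine}.
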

Furthermore,  \eqref{EHD ineq} along with Theorems \ref{main thm: escaping set KU} and \ref{main thm KU hyper} proves Corollary~\ref{cor: summary}.
The following theorem was mentioned briefly in \cite{bergweiler_hausdorff_2012} as a consequence of Mayer's technique from \cite{mayer_size_2007}, though no formal 
proof was given. We now give a short proof of the following theorem, which along with Theorem \ref{thm: Mayer functions}, completes the proof of Theorem \ref{main thm: Mayer functions}.

\begin{theorem}
		Let $f$ be a meromorphic function of finite order $\rho$ and suppose that the following hold. 
		\begin{enumerate}
			\item $f$ has infinitely many poles $b_i\in f(\CC)$ with $b_i\not\in\ol{\Sing(f^{-1})}$ for each $i\geq 1$. Suppose that $m(b_i)\leq q<\infty$ for each $i\geq 1$.
			\item There are uniform constants $s>0$, $Q>0$ and $\al>-1-1/q$ such that for each $i\in\NN$ 
			\begin{align*}
			\absval{f'(z)}\leq Q\absval{z}^{\al} \qquad for \qquad z\in f^{-1}(U_i), \absval{z}\to\infty,
			\end{align*} 
			where $U_i=B(b_i,s)$. 
		\end{enumerate}
		Then, 
		$$
			\EHD(f)\geq \frac{\rho}{\al+1+1/q}.
		$$ 
		If in addition $f$ is of divergence type, then this inequality is strict. 
\end{theorem}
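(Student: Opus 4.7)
The plan is to adapt the construction from the proof of Theorem~\ref{thm: Mayer functions} to the autonomous setting (specializing to $\lm_n \equiv 1$, $c_n \equiv 0$, whence $F_A = f$) while arranging the limit set of the resulting finite iterated function system to sit inside $B_R$ for arbitrarily large $R$. The hypothesis that $f$ has \emph{infinitely} many poles $b_i \not\in \ol{\Sing(f^{-1})}$ of uniformly bounded multiplicity, together with the uniform derivative bound, is exactly what permits the construction to be translated arbitrarily far out in the plane.

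First I would fix $t < \theta := \rho/(\al + 1 + 1/q)$ and $R > 0$, and since $\absval{b_i} \to \infty$, select a pole $b_i$ with $\absval{b_i} > R + s$ so that $U_i = B(b_i, s) \sub B_R$. Running Mayer's construction at $b_i$ verbatim then produces holomorphic inverse branches $\phi_m^{(i)} : U_i \to V_m^{(i)}$ with $\phi_m^{(i)}(b_i) = z_m^{(i)} \in f^{-1}(b_i)$, companion pull-back branches $\psi_m^{(i)} : V_m^{(i)} \to U_i$, and contractions $\gm_m^{(i)} := \psi_m^{(i)} \circ \phi_m^{(i)} : \ol{U}_i \to \ol{U}_i$. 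Since $z_m^{(i)} \to \infty$, one truncates the alphabet to those $m$ with $\absval{z_m^{(i)}} > R$, discarding only finitely many indices. The $f$-orbit of any point in the resulting limit set alternates between $U_i$ (in $B_R$) and neighborhoods $V_m^{(i)}$ of the $z_m^{(i)}$ (also in $B_R$ by Koebe distortion and $\absval{z_m^{(i)}} \to \infty$), so the limit set sits in $\set{z \in \cJ_r(f) : \absval{f^n(z)} > R, \; \forall n \geq 1}$.

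The derivative estimate from the proof of Theorem~\ref{thm: Mayer functions} (vastly simplified since the perturbation parameters now vanish) yields
\begin{align*}
\sum_m \absval{(\gm_m^{(i)})'(b_i)}^t \gtrsim \sum_m \absval{z_m^{(i)}}^{-t(\al + 1 + 1/q)},
\end{align*}
and Borel's theorem \eqref{Borel thm}, applied to $b_i$ (which we may take outside the at most two Picard exceptional points), shows the right-hand series diverges for $t < \theta$. Picking a finite sub-alphabet $\set{M_i(R), \dots, M_i(R) + N_t}$ large enough to make the corresponding truncated sum exceed $2K^2$, Bowen's formula for finite stationary NCIFS (Theorem~\ref{BF for NCIFS}) produces a hyperbolic set $X \sub B_R$ with $\HD(X) \geq t$. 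Letting $t \uparrow \theta$ and $R \to \infty$ gives $\EHD(f) \geq \theta$.

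For the divergence type case, replace the finite sub-alphabet by the full (still truncated) infinite alphabet $\set{m \geq M_i(R)}$ and invoke the Mauldin--Urba\'nski strict-inequality theorem for strongly regular infinite conformal iterated function systems: since divergence type gives $\sum_m \absval{z_m^{(i)}}^{-\rho} = \infty$, equivalently $\sum_m \absval{(\gm_m^{(i)})'(b_i)}^\theta = \infty$, even after the finite truncation, the limit set $J_{\Phi_{i,R}}$ satisfies $\HD(J_{\Phi_{i,R}}) > \theta$, and this dimension is approached from below by finite hyperbolic sub-systems. The main obstacle is ensuring that the gap $\HD(J_{\Phi_{i,R}}) - \theta$ is bounded away from zero \emph{uniformly} in $R$, so that the strict inequality survives passing $R \to \infty$; I expect this to follow from a Nevanlinna-theoretic uniformity of the divergence rate of the tail sums $\sum_{m \geq M} \absval{z_m^{(i)}}^{-\rho}$ across generic poles $b_i$, which should produce a single lower bound $h_0 > \theta$ for the dimension valid for all sufficiently large $R$.
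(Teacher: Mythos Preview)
Your argument for the first inequality is correct and essentially identical to the paper's: for each $R$, pick a pole $b_i$ with $\absval{b_i}$ large enough that $U_i \sub B_R$, run Mayer's construction at $b_i$ in its autonomous form, truncate the alphabet so that both $U_i$ and the intermediate images $\phi_m^{(i)}(U_i)$ lie in $B_R$, and then let $R \to \infty$. The paper phrases this as ``Theorem~\ref{thm: Mayer functions} allows us to construct an autonomous iterated function system $\Phi_i$ contained in $\cJ_r(f)\cap B_{R_i}$'' with $R_i=\absval{b_i}-2s$, but the content is the same.

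For the divergence case you also follow the paper's route, invoking the Mauldin--Urba\'nski hereditary-regularity theorem (Theorem~3.20 of \cite{mauldin_dimensions_1996}) to obtain $\HD(J_{\Phi_{i,R}}) > \theta$ for each fixed $i$ and $R$. You then correctly flag the uniformity problem: a family of strict inequalities $\HD(J_{\Phi_{i,R}}) > \theta$ does not by itself yield $\EHD(f) = \lim_{R \to \infty} \HD(\cJ_r(f,R)) > \theta$, since the gaps could shrink to zero as $R\to\infty$. The paper's proof does not address this point; after establishing strict inequality for each $i$ it simply writes ``letting $i \to \infty$ finishes the proof.'' So your concern is well founded, but it applies to the paper's argument as much as to your own. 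Your tentative remedy via uniform divergence rates of the Borel tail sums is plausible in spirit but would require substantive Nevanlinna-theoretic input that neither you nor the paper supplies.
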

\begin{proof}
	Since $\absval{b_i}\to\infty$ as $i\to\infty$, Theorem \ref{thm: Mayer functions} allows us to construct an autonomous iterated function system $\Phi_i$ contained in $\cJ_r(f)\cap B_{R_i}$, where $R_i=\absval{b_i}-2s$, such that 
	\begin{align*}
		\HD(J_{\Phi_i})\geq \frac{\rho}{\al+1+1/q}.
	\end{align*}
	Letting $i\to\infty$, and subsequently $R_i\to\infty$, finishes the proof of the first part. 
	
	Now, if $f$ is of divergence type then the IFS $\Phi_i$ is hereditarily regular (see \cite{mauldin_dimensions_1996, mauldin_graph_2003}) and it thus follows from Theorem~3.20 of \cite{mauldin_dimensions_1996} that we may sharpen our estimate so that
	\begin{align*}
		\HD(J_{\Phi_i})> \frac{\rho}{\al+1+1/q}.
	\end{align*}
	Again, letting $i\to\infty$ finishes the proof. 
\end{proof}
\begin{remark}
	The proof gives more. In fact, we see that for each $i\in\NN$ there exists $\ep_{i},\dl_{i}>0$ such that if $\lm_{i}=\seq{\lm_n}$ and $c_{i}=\seq{c_n}$ are sequences in $\CC$ with $\lm_n,\lm_n^{-1}\in B(1,\dl_{i})$ and $\absval{c_n}<\ep_{i}$ for each $n\in\NN$ then 
	\begin{align*}
		\HD(\cJ_r(F_{A,\lm_{i},c_{i}})\cap B_{R_i})\geq\frac{\rho}{\al+1+1/q},
	\end{align*}
	where $R_i=\absval{b_i}-2s$.
	
	It is worth noting that $\ep_{i}$ and $\dl_{i}$ depend on $\absval{b_i}$, and in particular we have that $\limty{i}\dl_{i}=0$
	as well as the respective statements for $\ep_{i}$. So, we are unable to find non-autonomous perturbations that work uniformly for each pole $b_i$, $i\in\NN$. 
\end{remark}

\section{Examples}\label{sec: Examples}
As the calculation of the perturbative values $\ep,\dl$ can be quite complicated, for each of the following examples we will instead show that the necessary hypotheses are satisfied in order to apply our theorems. 
\begin{example}[Periodic Functions]
The polynomial growth condition of \eqref{eqn: M derivative condition} is satisfied for every periodic function $f$ with $\al=0$. Therefore, we may apply Theorem \ref{main thm: Mayer functions} for any periodic function such that there exists a pole $b\not\in\ol{\Sing(f^{-1})}$. With additional information, such as the existence of infinitely many poles, we may apply Theorems \ref{main thm: escaping set KU} and \ref{main thm KU hyper}. The following example produces a class of such periodic functions. 
\end{example}
\begin{example}[Rational Exponentials]
	Let 
	\begin{align*}
	f(z)=R(e^z),
	\end{align*}
	where $R$ is a rational function such that $R(0)\neq \infty$ and $R(\infty)\neq \infty$. Then, $f$ is a simply periodic function with finitely many poles in each strip of periodicity. Furthermore, $\Sing(f^{-1})=\set{R(0),R(\infty)}$ and it is easy to check that we can apply Theorems \ref{main thm: escaping set KU} and \ref{main thm KU hyper} with $\rho=1$ and $\bt=0$. 
	In particular,
	\begin{align*}
	f_0(z)=\mu(\tan(z))^m, \quad m\in\NN \text{ and } \mu\in\CC^*
	\end{align*} 
	is such a function. Moreover, since each of the poles are of multiplicity $m$, we can find $\ep,\dl>0$ such that 
	\begin{align*}
		\HD(I_\infty(F_+))=\EHD(F_+)=\ED(F_+)=\frac{m}{m+1}\leq \HD(\cJ_r(F_+)),
	\end{align*}
	and 
	\begin{align*}
		\HD(\cJ_r(F_A))\geq \EHD(F_A)\geq \frac{m}{m+1}.
	\end{align*}
	This second inequality is precisely the inequality obtained for the autonomous case in \cite{kotus_hausdorff_1995,mayer_size_2007}. For autonomous dynamics, we can improve the inequality concerning the hyperbolic dimension. It follows from \cite{skorulski_existence_2006} that $\HD(\cJ_r(f_0))>1$, and we expect that something similar should hold in the non-autonomous case.	
\end{example}
\begin{example}[Elliptic Functions]
	Elliptic functions have been a subject of much study lately. Previously, the non-autonomous case of elliptic functions has been covered in \cite{atnip_nonautonomous_2017} while the autonomous and random cases have been discussed in \cite{mayer_size_2007,kotus_hausdorff_2003,kotus_hausdorff_2008,roy_random_2011}.
	
	If $f_0$ is an elliptic function, then, by definition, we have that there exists $w_1,w_2\in\CC$ with $\Im(\frac{w_1}{w_2})>0$, where $\Im(z)$ denotes the imaginary part of the complex number $z$, such that $f(z)=f(\zeta)$ if and only if $\zeta=z+nw_1+mw_2$ for some $n,m\in\ZZ$. Then, we have that $\rho=2$, $\bt=0$, and so applying Theorem \ref{main thm: escaping set KU}, we have that there exist $\ep,\dl>0$ such that 
	\begin{align*}
		\HD(I_\infty(F_+))=\EHD(F_+)=\ED(F_+)=\frac{2q}{q+1}\leq \HD(\cJ_r(F_+)),
	\end{align*}
	where $q$ is the maximum multiplicity of each of the poles of $f_0$. As $f_0$ is of divergence type, Theorem \ref{main thm KU hyper} gives that there exist $\ep,\dl>0$ such that 
	\begin{align*}
		\HD(\cJ_r(F_A))\geq \EHD(F_A)\geq \frac{2q}{q+1}.
	\end{align*}
\end{example}
\begin{example}[Exponential Elliptics]
In \cite{mayer_exponential_2005} Mayer and Urba\'nski show that the Julia set of a function of the form 
\begin{align*}
	f(z)=\mu e^{g(z)} \text{ for } \mu\in\CC^*,
\end{align*}
where $g$ is a non-constant elliptic function, has Hausdorff dimension equal to $2$. In \cite{mayer_size_2007}, Mayer shows that such functions also have hyperbolic dimension equal to $2$. The same is true for the non-autonomous case. To see this, we must first note that functions of this form do not satisfy the hypotheses of Theorem \ref{main thm: Mayer functions}. However, functions of the form 
\begin{align*}
	f^{(d)}_0(z)=\mu\left(1+\frac{g(z)}{d}\right)^d \text{ for } \mu\in\CC^*, d\in\NN,
\end{align*}
do satisfy the hypotheses of Theorem \ref{main thm: Mayer functions} with $\rho=2$, $\al=0$, and the maximum multiplicity of poles equal to $dq$, where $q$ is the maximum multiplicity of the poles of $g$. Then, we have that there exist $\ep,\dl>0$ such that if $\seq{\lm_n}$ and $\seq{c_n}$ are sequences in $\CC$ such that 
\begin{align*}
\lm_n,\lm_n^{-1}\in B(1,\dl)\spand \absval{c_n}<\ep
\end{align*}
for each $n\in\NN$, then    
\begin{align*}
	\HD(\cJ_r(F_A^{(d)}))\geq\EHD(\cJ_r(F_A^{(d)}))\geq \frac{2dq}{dq+1},
\end{align*}
where $F_A^{(d)}$ is the function of non-autonomous affine perturbations of $f_0^{(d)}$. 
As $\ep,\dl$ are independent of $d$, letting $d\to\infty$ we see that 
\begin{align*}
	\HD(\cJ_r(F_A))=\EHD(\cJ_r(F_A))=2,
\end{align*}
where $F_A$ is the function of non-autonomous affine perturbations of the exponential elliptic function $f$. 
\end{example}
\begin{example}[Polynomial Schwarzian Derivative]
Recall that the Schwarzian derivative of a function $f$ is given by 
\begin{align*}
	S(f)=\left(\frac{f''}{f'}\right)'-\frac{1}{2}\left(\frac{f''}{f'}\right)^2.
\end{align*}
Exponential and tangent functions are examples of classes which have constant Schwarzian derivative. Examples for which $S(f)$ is a polynomial are 
\begin{align*}
	f(z)=\int_0^z e^{Q(w)}dw, 
\end{align*}
where $Q(w)$ is a polynomial, and 
\begin{align*}
	f(z)=\frac{aA_i(z)+bB_i(z)}{cA_i(z)+dB_i(z)} \text{ with } ad-bc\neq 0,
\end{align*}
where $A_i,B_i$ are the Airy functions of the first and second kind respectively.
If $S(f_0)=P$, a polynomial of degree $d$, then one can show that it satisfies the hypotheses of Theorem \ref{main thm: Mayer functions} with $\rho=d/2+1$ and $\al=d/2$ and is even of divergence type (see Section 2.4 of \cite{mayer_thermodynamical_2007} for details). Applying Theorem \ref{main thm: Mayer functions} we have that there are $\ep,\dl>0$ such that 
\begin{align*}
	\HD(\jl_{r}(F_A))\geq \EHD(F_A)\geq \frac{d+2}{d+4}\geq \frac{1}{2}.
\end{align*}
\end{example}
\begin{example}
	Let
	\begin{align*}
		f_0(z)=\frac{1}{z\sin(z)}.
	\end{align*}
	Then, $f_0$ is a meromorphic function of order $\rho=1$ with infinitely many poles,
	\begin{align*}
		\cP=\set{n\pi: n\in\ZZ},
	\end{align*}
	all of which are simple except for $0$. We also have that $\infty$ is not an asymptotic value and the set of singular values consists of the lone asymptotic value $z=0$ and infinitely many critical values of the form $v_n\comp\pm\frac{2}{(2n+1)\pi}$ for $n\in\ZZ$, which implies that $f_0\in\cB$. One can then show that $\bt=1$ and that each pole has multiplicity equal to $1$. As $f_0$ is of divergence type, we may apply Theorems \ref{main thm: escaping set KU} and \ref{main thm KU hyper} to obtain that there exist $\ep,\dl>0$ such that 
	\begin{align*}
		\HD(I_\infty(F_+))=\EHD(F_+)=\ED(F_+)=\frac{1}{3}\leq \HD(\cJ_r(F_+))		
	\end{align*}
	and
	\begin{align*}
		\HD(\cJ_r(F_A))\geq \EHD(F_A)\geq \frac{1}{3}.		
	\end{align*}
\end{example}
\begin{example}
	Let 
	\begin{align*}
		f_0(z)=\frac{1}{z\cos(\sqrt{z})}.
	\end{align*}
	Then, $f_0$ is a meromorphic function with infinitely many poles
	\begin{align*}
		\cP=\set{0}\cup\set{\left(\frac{(2n+1)\pi}{2}\right)^2:n\in\NN},
	\end{align*}
	all of which are simple and have multiplicity identically equal to $1$. One can easily check that $\rho=1/2$, $\bt=1/2$ and that $f_0$ is of divergence type. Note that the singular values of $f_0$ contain a single asymptotic value, namely $z=0$, and infinitely many critical values $v_n$ such that 
	\begin{align*}
		\absval{v_n}=\absval{\frac{1}{(n\pi+(n\pi)^{-1})\cos(\sqrt{n\pi+(n\pi)^{-1}})}}.
	\end{align*} 
	As the right hand side tends towards $0$ as $n\to\infty$, we have that $f_0\in\cB$. Consequently, we are able to apply Theorems \ref{main thm: escaping set KU} and \ref{main thm KU hyper}. Thus, there exists $\ep,\dl>0$ such that 
	\begin{align*}
	\HD(I_\infty(F_+))=\EHD(F_+)=\ED(F_+)=\frac{1}{5}\leq \HD(\cJ_r(F_+))		
	\end{align*}
	and
	\begin{align*}
	\HD(\cJ_r(F_A))\geq \EHD(F_A)\geq \frac{1}{5}.		
	\end{align*}
\end{example}
\section{Acknowledgments}
We wish to thank Mariusz Urba\'nski and William Cherry, whose invaluable comments helped to improve this manuscript. We also wish to thank the comments to two anonymous referees whose comments helped to improve this manuscript, and in particular, led to an improved version of Theorem~\ref{main thm: Mayer functions}.

\bibliographystyle{jabbrv_plain}

\end{document}